\documentclass[11pt]{article}
\usepackage{amsmath,amsfonts,amsthm}
\usepackage[margin=1in]{geometry}
\usepackage[normalem]{ulem}
\usepackage{amssymb}
\usepackage{appendix}

\usepackage[hyphens]{url}
\usepackage{hyperref}
\usepackage{graphicx}
\usepackage{epstopdf}
\usepackage{float}
\usepackage{subfigure}
\usepackage{verbatim}
\usepackage{enumitem}
\usepackage{color}
%\usepackage{tikz}
%\usetikzlibrary{decorations.fractals}
%\usetikzlibrary{patterns,arrows,shapes,decorations.pathreplacing}
\usepackage{bm}
\usepackage{dsfont}

\usepackage{soul}

\numberwithin{equation}{section}
%\numberwithin{figure}{section}

\theoremstyle{plain}

\newcommand{\tU}{\tilde{U}}

\newcommand{\bh}{\bar{h}}
\newcommand{\hH}{\hat{H}}
\newcommand{\uh}{\underline{h}}

\newcommand{\by}{\bar{y}}
\newcommand{\uy}{\underline{y}}

\newtheorem{lemma}{Lemma}[section]
\newtheorem{theorem}{Theorem}[section]

\newtheorem{proposition}{Proposition}[section]
\newtheorem{corollary}{Corollary}[section]
\newtheorem{assumption}{Assumption}[section]
\newtheorem{remark}{Remark}[section]
\allowdisplaybreaks[1]

\def \E{\mathbb{E}}

\makeatletter
\def\@setcopyright{}
\def\serieslogo@{}
\makeatother

\title{Optimal consumption under adjustment costs with respect to multiple reference levels}
\author{
Yijie HUANG\thanks{Email: huang1@mail.ustc.edu.cn, School of Mathematical Sciences, University of Science and Technology of China, Hefei 230026, China.} \and
Kaixin YAN\thanks{Email: kaixinyan@stu.xmu.edu.cn, School of Mathematical Sciences, Xiamen University, Xiamen 361005, China.} \and
Qinyi ZHANG\thanks{Email: qinyi16.zhang@polyu.edu.hk, Department of Applied Mathematics, The Hong Kong Polytechnic University, Kowloon, Hong Kong}
}

\begin{document}

\date{\vspace{-0.7cm}}
\maketitle

\begin{abstract}
This paper studies a type of consumption preference where some adjustment costs are incured whenever the past spending maximum and the past spending minimum records are updated. This preference can capture the adverse effects of the historical consumption high and low values on the agent's consumption performance, thereby matching with some empirically observed smooth consumption patterns. By employing the dual transform, the smooth-fit conditions and the super-contact conditions, we obtain the closed-form solution of the dual PDE problem, and can characterize the optimal investment and consumption controls in the piecewise feedback form.  We provide the rigorous proof of the verification theorem and compensate the theoretical findings with some numerical examples and financial implications.
\vspace{0.1in}

\noindent\textbf{Keywords:} Optimal consumption, adjustment costs, multiple reference levels, piecewise feedback control, smooth consumption

\end{abstract}

\section{Introduction}\label{sec: intro}
Starting from Merton's seminal studies \cite{Mert1969RES, Mert1971JET}, the optimal consumption-investment problem via utility maximization has been one of the core research topics in quantitative finance. Over the past half-century, researchers have delved into numerous extensions to accommodate more general and complex financial market models, trading constraints, and other factors in decision making. 
This exploration has led to the rapid development of a plethora of influential results and methodologies.
Various extensions can be found in the literature such as the stochastic income (\cite{Wang2009JME}, \cite{Zeldes1989QJE}), the stochastic differential utility (\cite{SchroderSkiadas1999JET}), transaction costs (\cite{Liu2004JoF}), and wealth constraints (\cite{ElieTouzi2008FS}, \cite{JeonKwak2024FS}). 

Some empirical studies (see, for instance, \cite{Campbell1989}) have posited that the observed consumption tends to exhibit over-smoothness, which cannot be explained by the optimal solutions induced by some standard time-separable utility functions.  To partially explain the smooth consumption behavior, existing studies suggest incorporating the past consumption decisions into the utility function.
By introducing a reference level (either exogenous or endogenous), the optimal solution becomes smoother with fewer fluctuations. 
Time-nonseparable preferences have gained traction in modeling consumption patterns due to their ability to account for the observed consumption smoothness and the equity premium puzzle. 

In the existing literature, different means have been considered to incorporate the historical dependence into the utility function to measure the consumption performance. The most well-known one is the linear habit formation preference that concerns the difference between the current consumption rate and the habit formation process that is defined as the weighted average of the past consumption plan, see \cite{Ango22}, \cite{Ango23}, \cite{Constantinides1990JPE}, \cite{DetempleZapatero1992MF}, \cite{EnglezosKaratzas2009Sicon}, \cite{SchroderSkiadas2002RFS}, \cite{Bilsen}, \cite{YangYu2019}, \cite{Yu2015AoAP}, among others. The multiplicative habit formation has also gained a lot of attention, which generates the utility on the ratio of consumption rate and the habit formation process; see, \cite{Abel1990AER}, \cite{AngoYuY2024ArXiv}, \cite{Carroll}, \cite{Corr}, \cite{VanB}, to name a few. Another stream of research consider the historical dependence as the past running maximum of the consumption process, see among  \cite{AngoshtariBayraktarY2019Sifin}, \cite{DengLiPY2022FS}, \cite{GuasoniHubermanR2020MFF}, \cite{LiYuZ2023IME}, \cite{LiYuZ2024SiFin}.

Recently, \cite{ChoiJeonKoo2022JET} proposes a different perspective on the relative consumption by considering the adjustment costs.
Instead of directly incorporating the reference level into the utility function, they separate the reference level from consumption by introducing an additional cost term.
As a result of the adjustment costs arising from changes in the reference level,  the optimal consumption therein is relatively smooth and can partially account for the smooth consumption behavior observed in the market. Along this direction, several interesting extensions have been considered in some subsequent studies in \cite{JeonJ}, \cite{JeonJ2} and \cite{JeonJ1}.

Considering a reference level is an effective way to explain smooth consumption behavior. Different reference levels capture distinct features of past consumption. 
However, the information conveyed by a single reference level is always limited. If the influence of two or more reference levels on optimal consumption can be considered simultaneously, the results are likely to be more compelling and nuanced. 
Meanwhile, regardless of how the influence of the reference level is characterized in the utility, existing literature only considers single references (such as the habit formation, the past spending maximum, or the current consumption level). 

In this paper, we propose and study a type of consumption preference with multiple reference levels, namely, both the past consumption running maximum and the past consumption running minimum. 
Following the representations of \cite{ChoiJeonKoo2022JET}, we separate consumption from the reference level in the utility function, considering the adjustment costs that incur whenever the past consumption maximum level increases or the past consumption minimum level decreases. The incorporation of two adjustment cost terms will naturally enforce the optimal consumption behavior to be smooth.
Compared with \cite{ChoiJeonKoo2022JET}, our adjustment reference is chosen as the past consumption maximum and minimum levels instead of the current consumption level,  allowing for possibly larger and more flexible consumption fluctuations to achieve the higher expected utility on consumption. 
In solving the problem, we utilize a dynamic programming approach to derive the associated Hamilton-Jacobi-Bellman (HJB) equation. 
By employing dual transformation, smooth-fit conditions and super-contact conditions, we obtain the explicit solution of the HJB equation and demonstrate the verification theorem to ensure that the solution obtained is indeed optimal. 
Our optimal feedback consumption can be categorized into five parts: (1) adjusting consumption valley, (2) maintaining consumption at the valley level (flat), (3) falling between the consumption peak and valley, (4) maintaining consumption at the peak level (flat), and (5) adjusting consumption peak.
The subsequent numerical results also validate our model. 
As discussed earlier, we have achieved both a relatively smooth consumption path and a substantial consumption utility. 
We avoid the losses incurred by excessive consumption in the Merton problem while also steering clear of the utility loss associated with excessive focus on psychological states. 

The remainder of this paper is organized as follows. 
Section \ref{sec: formulate} introduces the market model with multiple reference levels for the consumption plan and formulates the stochastic control problem. 
Section \ref{sec: solu} heuristically derives the solution of the HJB variational inequality and presents the main results on the optimal feedback consumption and investment controls. 
Section \ref{sec: property} discusses some quantitative properties of the optimal controls. 
Section \ref{sec: numerical} presents some numerical examples to illustrate the sensitivity analyses, the simulation results, and their financial implications. 
The proofs are collected in Appendix \ref{sec:Appd}.

\section{Market Model and Problem Formulation}\label{sec: formulate}

Let $(\Omega, \mathcal{F}, \mathbb{F}, \mathbb{P})$ be a filtered probability space with $\mathbb{F}=(\mathcal{F}_t)_{t\geq 0}$ satisfying the usual conditions.
The financial market consists of one riskless asset and one risky asset. The riskless asset price follows $dB_t=rB_tdt$,
where $r>0$ denotes the interest rate.
The price of the risky asset is governed by the geometric Brownian motion
\begin{equation*}
dS_t = \mu S_tdt + \sigma S_tdW_t,\quad t\geq0
\end{equation*}
where $\mu>0$ is the expected return rate, $\sigma>0$ is the volatility, and $(W_{t})_{t\geq 0}$ is a standard $\mathbb{F}$-adapted Brownian motion.
The Sharpe ratio of the risky asset is denoted by $\kappa = \frac{\mu-r}{\sigma}$.

Let $(\pi_{t})_{t\geq 0}$ be the amount of wealth that the agent allocates in the risky asset and let $(c_t)_{t\geq0}$ be the consumption rate of the agent.
The self-financing wealth process $(X_t)_{t\geq 0}$ satisfies
\begin{equation}\label{eq: wealth_system}
dX_t= \big(rX_t + (\mu-r)\pi_t - c_t \big)dt + \sigma\pi_tdW_t, \quad t\geq 0
\end{equation}
with the initial wealth $X_0=x\geq 0$.

Let us denote the historical consumption running maximum and minimum respectively by
\begin{align*}
&H_{1,t} = h_1\vee ( \sup\limits_{s\leq t} c_s),\quad t\geq 0,\\
&H_{2,t} = h_2\wedge(\inf\limits_{s\leq t} c_s),\quad t\geq 0
\end{align*}
with $h_1>h_2>0$. We consider the path-dependent impacts by past consumption pattern in the sense that some adjustment costs incur when the historical consumption maximum/ minimum is updated upwards/downwards. Therefore, we are interested in the following preference to measure the consumption performance that 
$$
\E\bigg[\int_0^\infty e^{-\delta t} U(c_t)dt - \alpha\int_0^\infty e^{-\delta t}dV_{1t}^+ - \beta \int_0^\infty e^{-\delta t}dV_{2t}^-\bigg].
$$
Here $\delta>0$ is the subjective discount rate, $\alpha$ and $\beta$ stand for the adjustment cost parameters of the historical consumption maximum and minimum. $U(\cdot)$ and $V(\cdot)$ are utility functions and
$V_{1,t}^+$ is the cumulative increment of the process $V(H_{1,t})$ under the utility $V(\cdot)$, $V_{2,t}^-$ is the cumulative decrement of the process $V(H_{2,t})$ under the utility $V(\cdot)$, i.e.,
$$
\begin{aligned}
V_{1t}^+ = V(H_{1,t}) - V(h_1),\quad
V_{2t}^- = V(h_2) - V(H_{2,t}).
\end{aligned}
$$
That is, the adjustment costs from the changes in the utilities of historical consumption maximum and minimum are included in the objective function of the agent to determine the current consumption rate.

Similar to the discussions in \cite{ChoiJeonKoo2022JET}, the agent will not increase the past spending maximum at $t$ unless $\frac{1}{\delta} - \beta > 0$, as the total utility gain would not be positive otherwise. Therefore, we impose following assumption throughout the paper.
\begin{assumption}\label{assume: SA}
$
0 \leq \alpha \leq \frac{1}{\delta}, ~ \beta \geq 0
$.
\end{assumption}
%The case that $\beta < 0$ is interpreted as utility gains from increasing consumption peak.
%(Literature, linked with this type of loss aversion)

The control pair $(c, \pi)$ is said to be \textit{admissible} if $c$ is $\mathbb{F}$-predictable and non-negative, $\pi$ is $\mathbb{F}$-progressively measurable and non-negative, both satisfy the integrability condition $\int_0^{T} (c_t+\pi_t^\top \pi_t)dt<\infty$ a.s. for any $T>0$, as well as the no bankruptcy condition holds that $X_t\geq 0$ a.s. for $t\geq 0$. We use $\mathcal{A}(x,h_1,h_2)$ to denote the set of admissible controls $(c, \pi)$.

For the given initial wealth $x>0$, the initial past consumption maximum $h_1>0$, and the initial past consumption minimum $h_2\in(0, h_1)$, we consider the stochastic control problem
\begin{equation}\label{eq: preference}
u(x, h_1, h_2) = \sup\limits_{(c_t,\pi_t) \in \mathcal{A}(x,h_1,h_2)}
\E\bigg[\int_0^\infty e^{-\delta t}\big(U(c_t)dt -\alpha dV_{1t}^+ - \beta dV_{2t}^- \big) \bigg].
\end{equation}

It is assumed in the present paper that both $U(\cdot)$ and $V(\cdot)$ are the same power utility 
\begin{equation}\label{eq: utility_felicity}
U(x) = V(x) = \frac{x^{1-\gamma}}{1-\gamma}, ~~\gamma>0,~ \gamma \neq 1.
\end{equation}

We also impose the following assumption on model parameters:
\begin{assumption}\label{assume: exist}
$
K_0 = r + \frac{\delta-r}{\gamma} + \frac{\gamma-1}{2\gamma^2}\kappa^2 > 0
$.
\end{assumption}

\section{Heuristic Derivation of the Solution}\label{sec: solu}
In this section, we analyze the HJB equation associated with the problem \eqref{eq: preference} and characterize the optimal feedback control $c^*(x,h)$ and $\pi^*(x,h)$. If one does not adjust the historical consumption peak/valley and if $u(x,\cdot)$ is $C^2$ in $x$, the first order condition gives the optimal portfolio in a feedback form by $\pi^{\ast}(x,h)=-\frac{\mu-r}{\sigma^2}\frac{u_x}{u_{xx}}$. We then arrive at the HJB equation
\begin{equation} \label{eq: HJB_main}
\sup_{c\in [h_2,h_1]} \left[ U(c)- c u_x \right] - \delta u + rx u_x -\frac{\kappa^2}{2}\frac{u_x^2}{u_{xx}} = 0,\ \ \ \ \forall x\geq0,h_1\geq h_2 \geq 0.
\end{equation}

We plan to characterize some thresholds (depending on $h_1$ and $h_2$) for the wealth level $x$ and thus the auxiliary value function, the optimal portfolio and consumption can be expressed in each region. Similar to Deng et al. \cite{DengLiPY2022FS}, we can heuristically decompose the domain into several regions based on the first order condition of $c$ and express the HJB equation \eqref{eq: HJB_main} piecewisely.
When no adjustment is optimal, wealth $x$ lies between two thresholds, i.e., $x\in[x_\mathrm{gloom}(h_1,h_2), x_\mathrm{lavs}(h_1,h_2)]$ and thus $u_x \in [\uy(h_1,h_2), \by(h_1,h_2)]$ for some thresholds $x_\mathrm{gloom}(h_1,h_2), x_\mathrm{lavs}(h_1,h_2)$, and $\uy(h_1,h_2), \by(h_1,h_2)$.
Then HJB equation \eqref{eq: HJB_main} can be rewritten as:
\begin{equation}\label{eq: HJB_region_separate}
-\delta u + rx u_x -\frac{\kappa^2}{2}\frac{u_x^2}{u_{xx}} + \tU(u_x, h_1, h_2) = 0,
\end{equation}
where $\tU(q, h_1, h_2)$ is defined by:
$$
\tU(q, h_1, h_2) := \sup\limits_{c\in[h_2,h_1]}\big(U(c) - cq\big) = \left\{\begin{aligned}
&\frac{h_1^{1-\gamma}}{1-\gamma} - h_1q, & \mbox{if } %\uy(h_1,h_2) \leq
q < h_1^{-\gamma},\\
&-\frac{1}{\gamma^*}q^{\gamma^*}, &\mbox{if } h_1^{-\gamma} \leq q \leq h_2^{-\gamma},\\
&\frac{h_2^{1-\gamma}}{1-\gamma} - h_2q, &\mbox{if } h_2^{-\gamma} < q ,%\leq \by(h_1,h_2),
\end{aligned}\right.
$$
where $\gamma^* = -\frac{1-\gamma}{\gamma}$.

According to the problem, we derive some boundary conditions which can be used to solve HJB equation \eqref{eq: HJB_region_separate}.
First, the value function $u$ admits the homogeneous property that
\begin{equation}\label{eq: value_homo}
u(x,h_1,h_2) = h_1^{1-\gamma}u(x/h_1, 1, h_2/h_1) = h_2^{1-\gamma}u(x/h_2, h_1/h_2, 1).
\end{equation}
In addition, as $h_1\rightarrow+\infty$ and $h_2\rightarrow0$, our problem degenerates to the classical Merton's problem and it holds that
\begin{equation}\label{eq: condition_merton}
\lim\limits_{h_1\rightarrow+\infty, h_2\rightarrow0} u(x, h_1, h_2) = c_{_M} x^{1-\gamma},
\end{equation}
where $c_{_M}$ is a constant corresponding to classical Merton's problem.
Suppose the agents adjust past spending maximum $H_{1,t}$ over an infinitesimal time period $[t, t + dt)$, then the benefit can be computed in utility terms as follows:
\begin{equation}\label{eq: benefit_adjust_h1}
u(x_{t-}, H_{1,t-} + dh_1, H_{2, t-}) - u(x_{t-}, H_{1,t-}, H_{2,t-}) \approx u_{h_1}(x_{t-}, H_{1,t-}, H_{2,t-})dh_1.
\end{equation}
Suppose that the agents adjust past spending minimum $H_{2,t}$ over an infinitesimal time period $[t, t + dt)$, then the benefit can be computed in utility terms as follows:
\begin{equation}\label{eq: benefit_adjust_h2}
\begin{aligned}
u(x_{t-}, H_{1,t-} , H_{2, t-} + dh_2) - u(x_{t-}, H_{1,t-}, H_{2,t-}) \approx u_{h_2}(x_t, H_{1,t-}, H_{2,t-})dh_2,
\end{aligned}
\end{equation}
In addition, the adjustment cost can be represented as:
$$
\mathrm{Adjustment~Cost} =
\left\{\begin{aligned}
&\alpha dV_{1t}^+ = \alpha V'(H_{1,t-})dh_1, &\mbox{ if } dh_1 > 0,\\
&\beta dV_{2t}^- = -\beta V'(H_{2,t-})dh_2, &\mbox{ if } dh_2 < 0.
\end{aligned}\right.
$$
Thus, the adjustment is optimal only when the benefit is greater than or equal to the cost, i.e., $$\left\{\begin{aligned}
&u_{h_1}(x_{t-}, H_{1,t-}, H_{2,t-}) \geq \alpha V'(H_{1,t-}), & \mbox{if } dh_1 > 0,\\
&-u_{h_2}(x_{t-}, H_{1,t-}, H_{2,t-}) \geq \beta V'(H_{2,t-}), & \mbox{if } dh_2 < 0.
\end{aligned}\right.$$
Note that $u_{h_1}(x_{t-}, H_{1,t-}, H_{2,t-})$ and $u_{h_2}(x_{t-}, H_{1,t-}, H_{2,t-})$, the respective marginal valuations of consumption and historical consumption peak implied by the value function, are different from the marginal utility of consumption peak $V'(H_{1,t-})$ or valley $V'(H_{2,t-})$, as indicated in equation \eqref{eq: benefit_adjust_h1} and \eqref{eq: benefit_adjust_h2}.
$u_{h_1}(x_{t-}, H_{1,t-}, H_{2, t-})$ and $u_{h_2}(x_{t-}, H_{1, t-}, H_{2,t-})$ measure the benefits of adjusting historical consumption peak/valley, considering the total effects of the decision on future utility values.
$V'(H_{1,t-})$ and $V'(H_{2,t-})$ measure the benefit of marginal change in habit over an infinitesimal time period $[t, t+dt)$, and is proportional to the cost of adjustment.
Therefore, no adjustment is optimal, i.e., the agent does not adjust past spending maximum if $u_{h_1}(x_{t-}, H_{1,t-}, H_{2,t-}) < \alpha V'(H_{1,t-})$, and does not adjust past spending minimum if $u_{h_2}(y_{t-}, H_{1,t-}, H_{2,t-}) > -\beta V'(H_{2,t-})$.

In addition, the smooth pasting condition implies that we have
\begin{equation}\label{eq: condition_adjust_h1}
u_{h_1}(x, h_1, h_2) = \alpha V'(h_1),
\end{equation}
when the agent increases past spending maximum, and
\begin{equation}\label{eq: condition_adjust_h2}
u_{h_2}(x, h_1, h_2) = -\beta V'(h_2),
\end{equation}
when the agent decreases the past spending minimum.
In summary, the value function $u$ satisfies the following HJB variational inequality
$$
\max\bigg\{-\delta u + rx u_x -\frac{\kappa^2}{2}\frac{u_x^2}{u_{xx}} + \tU(u_x, h_1, h_2),
u_{h_1}(x, h_1, h_2) - \alpha V'(h_1), -u_{h_2}(x, h_1, h_2) - \beta V'(h_2)\bigg\} = 0.
$$
%Hence, the agent's optimal decision can be illustrated in three regions in the state space: non-adjustment region \textbf{NR}, increasing~region \textbf{IR}, and decreasing region \textbf{DR} as follows:
%$$
%\begin{aligned}
%\textbf{NR}&=\{(x,h_1,h_2)\in\mathbb{R}_+^3: \uy(h_1,h_2)\leq u_x \leq \by(h_1,h_2)\}, \\
%\textbf{IR}&=\{(x,h_1,h_2)\in\mathbb{R}_+^3: u_x < \uy(h_1,h_2)\}, \\
%\textbf{DR}&=\{(x,h_1,h_2)\in\mathbb{R}_+^3: u_x > \by(h_1,h_2)\}.
%\end{aligned}
%$$
%In domain \textbf{NR}, the optimal consumption lies between $[h_1, h_2]$ and does not adjustment past spending maximum/minimum; in domain \textbf{IR}, the agent increases the consumption peak; in domain \textbf{DR}, the agent decreases the consumption valley.
%Note that the \textbf{NR} region can also be separated to three regions:
%$$
%\begin{aligned}
%\textbf{NR}_{1} &= \{(x,h_1,h_2)\in\mathbb{R}_+^3: \uy(h_1,h_2)\leq u_x < h_1^{-\gamma}\}, \\
%\textbf{NR}_{2} &= \{(x,h_1,h_2)\in\mathbb{R}_+^3: h_1^{-\gamma} \leq u_x \leq h_2^{-\gamma}\}, \\ \textbf{NR}_{3} &= \{(x,h_1,h_2)\in\mathbb{R}_+^3: h_2^{-\gamma}\leq u_x \leq \by(h_1,h_2)\}.
%\end{aligned}
%$$
%In region \textbf{NR}$_{1}$, the optimal consumption equals but not updates the past spending maximum $h_1$; in region \textbf{NR}$_{2}$, the optimal consumption lies in $(h_1, h_2)$; in region \textbf{NR}$_{3}$, the optimal consumption equals but not updates the past spending minimum $h_2$.

We then employ the dual transform only with respect to variable $x$ and treat $h_1$ and $h_2$ as parameters that
$$
v(y,h_1,h_2) := \sup\limits_{x\geq0}(u(x,h_1,h_2) - xy), ~\mathrm{for}~ h_1\geq h_2 \geq 0.
$$
For any given $(x,h_1,h_2)$, we define $y(x,h_1,h_2) := u_x(x,h_1,h_2)$ (short as $y$).
Similar to \cite{DengLiPY2022FS}, using the dual transform,  we can obtain the dual PDE
\begin{equation}\label{eq: HJB_dual}
\begin{aligned}
\delta v - \frac{\kappa^2 y^2}{2}v_{yy} - (\delta-r)yv_y
=
\tU(y, h_1, h_2).
\end{aligned}
\end{equation}
By virtue of the duality relationship and \eqref{eq: condition_adjust_h1} and \eqref{eq: condition_adjust_h2}, we have the following boundary conditions for the dual PDE
\begin{equation}\label{eq: dual_boundary_adjust1}
\begin{aligned}
v_{h_1}(y, h_1, h_2) = \alpha V'(h_1), &\mbox{~for~} y \leq \uy(h_1,h_2), \\
v_{h_2}(y, h_1, h_2) = -\beta V'(h_2), &\mbox{~for~} y \geq \by(h_1,h_2).
\end{aligned}
\end{equation}
Therefore, the dual value function $v$ satisfies the following HJB variational inequality

\begin{equation}\label{eq: HJB_dual_ineq}
\max\bigg\{-\delta v + \frac{\kappa^2 y^2}{2}v_{yy} + (\delta-r)yv_y + \tU(y, h_1, h_2),
v_{h_1}(y, h_1, h_2) - \alpha V'(h_1), -v_{h_2}(y, h_1, h_2) - \beta V'(h_2)\bigg\} = 0.
\end{equation}

The super-contact conditions are written by
\begin{equation}\label{eq: dual_boundary_adjust2}
\begin{aligned}
&v_{yh_1}(y, h_1, h_2) = 0,  &\mbox{for~} y \leq \uy(h_1,h_2), \\
&v_{yh_2}(y, h_1, h_2) = 0,  &\mbox{~for~} y \geq \by(h_1,h_2).
\end{aligned}
\end{equation}
The homogeneous property \eqref{eq: value_homo} shows that
\begin{equation}\label{eq: dual_homo}
v(y,h_1,h_2) = h_1^{1-\gamma}v(yh_1^{\gamma}, 1, h_2/h_1) = h_2^{1-\gamma}v(yh_2^{\gamma}, h_1/h_2,1),
\end{equation}
and condition \eqref{eq: condition_merton} implies
\begin{equation}\label{eq: dual_merton}
\lim\limits_{h_1\rightarrow+\infty, h_2\rightarrow0} v(y, h_1, h_2)
= c_{_d} y^{\gamma^*}.
\end{equation}

\begin{proposition}\label{prop: solu_dual_value}
Under 
%Assumption \ref{assume: exist} and Assumption \ref{assume: SA}, 
the boundary conditions \eqref{eq: dual_boundary_adjust1}-\eqref{eq: dual_boundary_adjust2}, homogeneous property \eqref{eq: dual_homo}, and smooth-fit conditions at points $y = h_1^{-\gamma}$ and $y = h_2^{-\gamma}$, the dual value function $v(y,h_1,h_2)$ is given by
\begin{equation}\label{eq: solu_dual_value}
v(y,h_1,h_2) =
\left\{\begin{aligned}
&C_1(h_1,h_2)y^{m_1} + C_2(h_1)y^{m_2} + \frac{h_1^{1-\gamma}}{\delta(1-\gamma)} - \frac{yh_1}{r},  &\mbox{ if } z_\alpha h_1^{-\gamma} \leq y < h_1^{-\gamma}, \\
&C_3(h_2)y^{m_1} + C_4(h_1)y^{m_2} +
\frac{2}{\kappa^2 \gamma^*(\gamma^*-m_1)(\gamma^*-m_2)}y^{\gamma^*},  &\mbox{ if } h_1^{-\gamma}\leq y \leq h_2^{-\gamma}, \\
&C_5(h_2)y^{m_1} + C_6(h_1,h_2)y^{m_2} + \frac{h_2^{1-\gamma}}{\delta(1-\gamma)} - \frac{yh_2}{r},  &\mbox{ if }  h_2^{-\gamma} < y \leq z_\beta h_2^{-\gamma}, \\
&v\big(y, \bh_1(y), h_2 \big)
+ \frac{\alpha}{1-\gamma}\big(h_1^{1-\gamma} - \bh_1(y)^{1-\gamma}\big), &\mbox{ if } y < z_\alpha h_1^{-\gamma}, \\
&v\big(y, h_1, \uh_2(y) \big)
-\frac{\beta}{1-\gamma}\big(h_2^{1-\gamma} - \uh_2(y)^{1-\gamma}\big), &\mbox{ if } y > z_\beta h_2^{-\gamma},
\end{aligned}\right.
\end{equation}
where $\bh_1(y):= (y/z_\alpha)^{-1/\gamma}$, $\uh_2(y):= (y/z_\beta)^{-1/\gamma}$, $m_1\in (1,+\infty)$, $m_2\in(-\infty, \min(\gamma^*, 0))$ are the two roots of the  quadratic equation $\frac{\kappa^2}{2}m^2 + \bigg(\delta-r-\frac{\kappa^2}{2}\bigg) m - \delta = 0$, and $C_1(h_1,h_2)$, $C_2(h_1)$, $C_3(h_2)$, $C_4(h_1)$, $C_5(h_2)$, and $C_6(h_1,h_2)$ are functions of $h_1$ and $h_2$ given by
\begin{equation}\label{eq: C_solu}
\begin{aligned}
C_1(h_1,h_2) =& C_3(h_2) + \frac{2(1-\gamma^*)}{\kappa^2(m_1-m_2)m_1(m_1-1)(m_1-\gamma^*)}h_1^{1 + \gamma(m_1-1)}, \\
C_2(h_1) =& \frac{1-\gamma^*}{(m_1-m_2)(m_2-\gamma^*)}\bigg(\frac{m_1(\alpha\delta-1)}{\delta}z_\alpha^{-m_2} + \frac{m_1-1}{r}z_\alpha^{1-m_2}\bigg) h_1^{1+\gamma(m_2-1)}, \\
C_3(h_2) =& C_5(h_2) + \frac{2(\gamma^*-1)}{\kappa^2(m_1-m_2)m_1(m_1-1)(m_1-\gamma^*)}h_2^{1 + \gamma(m_1-1)}, \\
C_4(h_1) =& C_2(h_1) - \frac{2(\gamma^*-1)}{\kappa^2(m_1-m_2)m_2(m_2-1)(m_2-\gamma^*)}h_1^{1 + \gamma(m_2-1)}, \\
C_5(h_2) =& \frac{1-\gamma^*}{(m_1-m_2)(m_1-\gamma^*)}\bigg(\frac{m_2(\beta\delta+1)}{\delta}z_\beta^{-m_1} + \frac{1-m_2}{r}z_\beta^{1-m_1}\bigg)h_2^{1 + \gamma(m_1-1)}, \\
C_6(h_1,h_2) =& C_4(h_1) - \frac{2(1-\gamma^*)}{\kappa^2(m_1-m_2)m_2(m_2-1)(m_2-\gamma^*)}h_2^{1 + \gamma(m_2-1)}. \\
\end{aligned}
\end{equation}
Here, $z_\alpha \in (0, 1-\alpha\delta]$ and $z_\beta \in [1+\beta\delta, +\infty)$ are solutions to the following algebraic equations
\begin{equation}\label{eq: z_alpha_beta}
\begin{aligned}
&\frac{2}{\kappa^2 m_1(m_1-1)}z_\alpha^{m_1} + \frac{z_\alpha}{r}(m_2-1) + m_2\bigg(\alpha - \frac1\delta\bigg) = 0, \\
&\frac{2}{\kappa^2 m_2(m_2-1)}z_\beta^{m_2} + \frac{z_\beta}{r}(m_1-1) - m_1\bigg(\beta + \frac1\delta\bigg) = 0.\end{aligned}
\end{equation}
%
%The regions $\mathbf{NR}_{d,1}$, $\mathbf{NR}_{d,2}$, $\mathbf{NR}_{d,3}$, $\mathbf{IR}_d$, $\mathbf{DR}_d$ are rewritten as:
%%
%\begin{equation}\label{eq: dual_regions_solu}
%\begin{aligned}
%\mathbf{NR}_{d,1} &= \{(y,h_1,h_2) \in \mathbb{R}_+^3: h_1\geq h_2,~ z_\alpha h_1^{-\gamma} \leq y < h_1^{-\gamma} \},\\
%\mathbf{NR}_{d,2} &= \{(y,h_1,h_2) \in \mathbb{R}_+^3: h_1\geq h_2,~ h_1^{-\gamma} \leq y \leq h_2^{-\gamma} \},\\
%\mathbf{NR}_{d,3} &= \{(y,h_1,h_2) \in \mathbb{R}_+^3: h_1\geq h_2,~ h_2^{-\gamma} < y \leq z_\beta h_2^{-\gamma} \},\\
%\mathbf{IR}_d &= \{(y,h_1,h_2) \in \mathbb{R}_+^3: h_1\geq h_2,~ y < z_\alpha h_1^{-\gamma} \}, \\
%\mathbf{DR}_d &= \{(y,h_1,h_2) \in \mathbb{R}_+^3: h_1\geq h_2,~ y > z_\beta h_2^{-\gamma} \}.\\
%\end{aligned}
%\end{equation}
%%
\end{proposition}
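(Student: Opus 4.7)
The approach is to solve the Cauchy--Euler ODE
\begin{equation*}
\delta v - \frac{\kappa^2 y^2}{2}v_{yy} - (\delta-r)y v_y = \tU(y,h_1,h_2)
\end{equation*}
on each of the three intervals on which $\tU(\cdot,h_1,h_2)$ is smooth, treating $h_1,h_2$ as parameters, and then gluing the pieces across the two interior kinks $y=h_1^{-\gamma},h_2^{-\gamma}$ and the two free boundaries $y=z_\alpha h_1^{-\gamma},z_\beta h_2^{-\gamma}$. The homogeneous ODE has characteristic exponents $m_1>1$ and $m_2<\min(\gamma^*,0)$, so every homogeneous solution takes the form $Cy^{m_1}+Dy^{m_2}$. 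Particular solutions follow by ansatz: the affine sources on the peak-flat and valley-flat segments produce the affine particular parts $\frac{h_i^{1-\gamma}}{\delta(1-\gamma)}-\frac{y h_i}{r}$ ($i=1,2$); the power source $-\frac{1}{\gamma^*}y^{\gamma^*}$ on the interior produces $\frac{2}{\kappa^2\gamma^*(\gamma^*-m_1)(\gamma^*-m_2)}y^{\gamma^*}$ by direct substitution, where the denominator is precisely the characteristic quadratic evaluated at $\gamma^*$.

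The homogeneity \eqref{eq: dual_homo}, applied to each $y^{m_i}$-summand, forces its coefficient to be a constant multiple of $h_1^{1+\gamma(m_i-1)}$ or of $h_2^{1+\gamma(m_i-1)}$. Combined with the Merton limit \eqref{eq: dual_merton}, this rules out any $h_1$-dependence in $C_5$ and any $h_2$-dependence in $C_2$, while $C_1,C_3,C_4,C_6$ may carry both. The smooth-fit conditions at $y=h_1^{-\gamma}$ and $y=h_2^{-\gamma}$, i.e.\ continuity of $v$ and $v_y$, give four linear equations; because $\tU(\cdot,h_1,h_2)$ is already $C^1$ at these points (the power utility meets its tangent line smoothly), these collapse to four explicit relations, yielding the stated expressions for $C_1-C_3$, $C_4-C_2$, $C_3-C_5$, and $C_6-C_4$ in \eqref{eq: C_solu}.

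The remaining unknowns $C_2$, $C_5$, $z_\alpha$, and $z_\beta$ come from the free-boundary conditions. On the peak side, integrating $v_{h_1}=\alpha V'(h_1)$ in $h_1$ across the adjustment region $y<z_\alpha h_1^{-\gamma}$ yields the additive representation $v(y,h_1,h_2)=v(y,\bh_1(y),h_2)+\frac{\alpha}{1-\gamma}(h_1^{1-\gamma}-\bh_1(y)^{1-\gamma})$ with $\bh_1(y)=(y/z_\alpha)^{-1/\gamma}$ obtained by inverting $y=z_\alpha h_1^{-\gamma}$. Imposing the gradient condition $v_{h_1}=\alpha h_1^{-\gamma}$ and the super-contact $v_{yh_1}=0$ at $y=z_\alpha h_1^{-\gamma}$ on the peak-flat expression, substituting the homogeneous ansatz $C_2(h_1)=c_2 h_1^{1+\gamma(m_2-1)}$, and cancelling the common $h_1^{-\gamma}$ factor, produces two scalar equations in $(c_2,z_\alpha)$; eliminating $c_2$ gives the first equation of \eqref{eq: z_alpha_beta}, and back-substitution delivers the stated $C_2(h_1)$. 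The valley-side argument is verbatim symmetric, producing $z_\beta$ and $C_5$.

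The principal obstacle is the bookkeeping: one must verify that after imposing homogeneity, the system of four smooth-fit plus four free-boundary-and-super-contact conditions on six coefficients and two free boundaries decouples into a peak subsystem determining $(C_2,z_\alpha)$ and a valley subsystem determining $(C_5,z_\beta)$, with $C_1,C_3,C_4,C_6$ then obtained by pure substitution. A secondary but non-trivial point is existence and location of the roots: $z_\alpha\in(0,1-\alpha\delta]$ and $z_\beta\in[1+\beta\delta,\infty)$ follow from monotonicity of the left-hand sides of \eqref{eq: z_alpha_beta} (using $m_1>1$, $m_2<0$) together with endpoint evaluation under Assumption \ref{assume: SA}.
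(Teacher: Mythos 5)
Your proposal follows essentially the same route as the paper: general Cauchy--Euler solutions with the stated particular parts on the three regions, smooth fit at the kinks $y=h_1^{-\gamma},h_2^{-\gamma}$ to pin down the coefficient differences, homogeneity plus the Merton limit to restrict the $h$-dependence of the coefficients, and the gradient and super-contact conditions at the free boundaries to determine $C_2$, $C_5$, $z_\alpha$, $z_\beta$, with the same monotonicity-plus-endpoint-evaluation argument locating the roots. The only cosmetic differences are that you substitute the power ansatz $C_2(h_1)=c_2h_1^{1+\gamma(m_2-1)}$ directly where the paper solves for $C_{2,h_1}$ and integrates using $C_2(+\infty)=0$, and that homogeneity alone only forces $C_i=h_1^{1+\gamma(m_i-1)}\tilde{C}_i(h_2/h_1)$ for some function of the ratio, so the single-variable dependence is an ansatz in your write-up exactly as it is (explicitly, as a ``guess'') in the paper's.
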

The following theorem states the main results of this paper, provides the optimal investment, consumption strategies in piecewise feedback form using the variables $y, h_1,$ and $h_2$.
\begin{theorem}[Verification Theorem]\label{thm: verification}
Let $(x,h_1,h_2)\in\mathbb{R}_+^3$, $h_1 \geq h_2$, where $x\geq0$ stands for the initial wealth, $h_1$ and $h_2$ are the initial reference levels.
For $(y, h_1, h_2) \in \mathbb{R}_+^3$ and $h_1\geq h_2$, let us define the feedback functions:
\begin{equation}\label{eq: consume_feedback}
c^\dag(y, h_1, h_2) =
\left\{\begin{aligned}
&h_1, &\mbox{if }& z_\alpha h_1^{-\gamma} \leq y < h_1^{-\gamma}, \\
&y^{-\frac{1}{\gamma}}, &\mbox{if }& h_1^{-\gamma} \leq y \leq h_2^{-\gamma}, \\
&h_2, &\mbox{if }& h_2^{-\gamma} < y \leq z_\beta h_2^{-\gamma}, \\
&\bh_1(y), &\mbox{if }& y < z_\alpha h_1^{-\gamma}, \\
&\uh_2(y), &\mbox{if }& y > z_\beta h_2^{-\gamma}, \\
\end{aligned}\right.
\end{equation}
\begin{equation}\label{eq: portfolio_feedback}
\begin{aligned}
&\pi^\dag(y, h_1, h_2)
=
\frac{\mu-r}{\sigma^2} \cdot\\
&\left\{\begin{aligned}
&m_1(m_1-1)C_1(h_1, h_2)y^{m_1-1} + m_2(m_2-1)C_2(h_1)y^{m_2-1}, &\mbox{if }& z_\alpha h_1^{-\gamma} \leq y < h_1^{-\gamma}, \\
&m_1(m_1-1)C_3(h_2)y^{m_1-1} + m_2(m_2-1)C_4(h_1)y^{m_2-1} \\
&+ \frac{2(\gamma^*-1)}{\kappa^2(\gamma^*-m_1)(\gamma^*-m_2)}y^{\gamma^*-1}, &\mbox{if }& h_1^{-\gamma} \leq y \leq h_2^{-\gamma}, \\
&m_1(m_1-1)C_5(h_2)y^{m_1-1} + m_2(m_2-1)C_6(h_1,h_2)y^{m_2-1}, &\mbox{if }& h_2^{-\gamma} < y \leq z_\beta h_2^{-\gamma}, \\
&m_1(m_1-1)C_1\big(\bh_1(y), h_2\big)y^{m_1-1} + m_2(m_2-1)C_2\big(\bh_1(y)\big)y^{m_2-1}, &\mbox{if }& y < z_\alpha h_1^{-\gamma}, \\
&m_1(m_1-1)C_5\big( \uh_2(y)\big)y^{m_1-1} + m_2(m_2-1)C_6\big(h_1,\uh_2(y)\big)y^{m_2-1}, &\mbox{if }& y > z_\beta h_2^{-\gamma}. \\
\end{aligned}\right.
\end{aligned}
\end{equation}
We consider the process $Y_t(y) := ye^{\delta t}\xi_t$, where $\xi_t := e^{-\big(r+\frac{\kappa^2}{2} \big)t-\kappa W_t}$ is the discounted rate state price density process, and $y^* = y^*(x, h_1, h_2)$ is the unique solution to the budget constraint
$$
\E\bigg[\int_0^\infty c^\dag(Y_s(y), H^\dag_{1,s}(y), H^\dag_{2,s}(y)) \xi_t\bigg] = x,
$$
where
$$
\begin{aligned}
H^\dag_{1,t}(y) &:= h_1\vee\sup\limits_{s\leq t}c^\dag(Y_s(y), H_{1,s}^\dag(y)) = h_1\vee\big(\inf\limits_{s\leq t}Y_s(y)/z_\alpha\big)^{-\frac1\gamma}, \\
H^\dag_{2,t}(y) &:= h_2\wedge\inf\limits_{s\leq t}c^\dag(Y_s(y), H_{2,s}^\dag(y)) = h_2\wedge\big(\sup\limits_{s\leq t}Y_s(y)/z_\beta\big)^{-\frac1\gamma}, \\
\end{aligned}
$$
are the optimal references processes corresponding to any fixed $y>0$.
The value function $u(x,h_1,h_2)$ can be attained by employing the optimal consumption and portfolio strategies in the feedback form that $c_t^* = c^\dag(Y_t^*, H_{1,t}^*, H_{2,t}^*)$ and $\pi_t^* = \pi^\dag(Y_t^*, H_{1,t}^*, H_{2,t}^*)$ where $Y_t^* = Y_t(y^*)$, $H_{1,t}^* = H_{1,t}^\dag(y^*)$, and $H_{2,t}^* = H_{2,t}^\dag(y^*)$.

The process $H_{1,t}^*$ is strictly increasing if and only if $Y_t^* = z_\alpha {H_{1,t}^*}^{-\gamma}$.
If we have $y^*(x,h_1,h_2) < z_\alpha h_1^{-\gamma}$ at the initial time, the optimal consumption creates a new peak and brings $H_{1, 0-}^* = h_1$ jumping immediately to a higher level $H_{1,0}^* = (y^*(x, h_1, h_2)/z_\alpha)^{-\frac{1}{\gamma}}$ such that $t = 0$ becomes the only jump time of $H_{1,t}^*$.
Similarly, the process $H_{2,t}^*$ is strictly decreasing in and only if $Y_t^* = z_\beta {H_{2,t}^*}^{-\gamma}$.
If we have $y^*(x,h_1,h_2) > z_\beta h_2^{-\gamma}$ at the initial time, the optimal consumption creates a new valley and brings $H_{2, 0-}^* = h_2$ jumping immediately to a lower level $H_{2,0}^* = (y^*(x, h_1, h_2)/z_\beta)^{-\frac{1}{\gamma}}$ such that $t = 0$ becomes the only jump time of $H_{2,t}^*$.
\end{theorem}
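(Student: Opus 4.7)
The plan is to prove this verification theorem by the standard duality approach coupled with an Itô-based upper-bound argument. Let $u(x,h_1,h_2) := \inf_{y>0}\bigl(v(y,h_1,h_2) + xy\bigr)$ be the primal candidate obtained by inverting the Legendre transform from Proposition \ref{prop: solu_dual_value}. A preliminary step is to check that $v$ is strictly convex and of class $C^2$ in $y$ across each of the four pasting interfaces $y=h_1^{-\gamma}$, $y=h_2^{-\gamma}$, $y=z_\alpha h_1^{-\gamma}$, $y=z_\beta h_2^{-\gamma}$; the smooth-fit and super-contact conditions were imposed precisely to determine $C_1,\dots,C_6$ and $z_\alpha,z_\beta$, so the regularity follows from the algebraic system \eqref{eq: C_solu}--\eqref{eq: z_alpha_beta}. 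Dualising back, $u$ is $C^2$ and concave in $x$, satisfies the HJB variational inequality, the marginal inequalities $u_{h_1}\le\alpha V'(h_1)$ and $-u_{h_2}\le\beta V'(h_2)$, and the homogeneity \eqref{eq: value_homo}.

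Next, I would establish existence, uniqueness, and admissibility of the candidate feedback strategy. Fix $y>0$ and analyse the geometric-Brownian process $Y_t(y)=y e^{\delta t}\xi_t$, whose running infimum and supremum drive the reflected reference processes $H_{1,t}^\dag$ and $H_{2,t}^\dag$. Explicit Laplace transforms of $(\inf_{s\le t}Y_s,\sup_{s\le t}Y_s,Y_t)$ together with Assumption \ref{assume: exist} give integrability of $c^\dag$ against $\xi_t$, and standard continuity/monotonicity arguments show that $y\mapsto \E\bigl[\int_0^\infty c^\dag(Y_s,H_{1,s}^\dag,H_{2,s}^\dag)\xi_s\,ds\bigr]$ is strictly decreasing and surjective onto $(0,\infty)$, yielding a unique $y^{*}$. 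The initial jump behaviour at $t=0$ is then forced by the definition of $H^\dag_{1,0}$ and $H^\dag_{2,0}$ as running extrema: if $y^{*}<z_\alpha h_1^{-\gamma}$, the max-reflection immediately raises $H_{1,0}^{*}$ so that $Y_0^{*}=z_\alpha (H_{1,0}^{*})^{-\gamma}$, and analogously for $H_{2}^{*}$; after this single jump, the reflection principle keeps $Y^{*}$ inside the closure of the no-adjustment region, so both reference processes move continuously and only increase (resp.\ decrease) when $Y^{*}$ hits the appropriate free boundary, giving the last two characterisations in the statement.

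The heart of the proof is the verification inequality. For a general admissible $(c,\pi)\in\mathcal{A}(x,h_1,h_2)$, I would apply Itô's formula to $e^{-\delta t} u(X_t, H_{1,t}, H_{2,t})$, treating $H_{1,t}$ and $H_{2,t}$ as finite-variation (nondecreasing, resp.\ nonincreasing) processes. The HJB variational inequality controls the absolutely continuous part by $-e^{-\delta t}U(c_t)\,dt$ plus a local martingale, while the marginal inequalities yield $u_{h_1}(X_t,H_{1,t},H_{2,t})\,dH_{1,t}\le \alpha V'(H_{1,t})\,dH_{1,t}=\alpha\,dV_{1t}^+$ and $-u_{h_2}(X_t,H_{1,t},H_{2,t})\,dH_{2,t}\le \beta V'(H_{2,t})\,(-dH_{2,t})=\beta\,dV_{2t}^-$. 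Localising, taking expectations, and sending the stopping time to $\infty$ via monotone/dominated convergence and the transversality limit $\E[e^{-\delta t}u(X_t,H_{1,t},H_{2,t})]\to 0$ gives $u(x,h_1,h_2)\ge J(c,\pi)$. For the feedback pair $(c^{*},\pi^{*})$ every inequality becomes an equality by the smooth-fit and super-contact construction of $v$, because either the HJB PDE holds with equality in the no-adjustment region, or $c^{*}$ sits exactly at $h_1$ or $h_2$ and adjustment occurs only on the free boundary where $u_{h_1}=\alpha V'(h_1)$ or $u_{h_2}=-\beta V'(h_2)$.

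The main obstacle will be the transversality/integrability step. Because $H_{1,t}^{*}$ can grow unboundedly upward while $H_{2,t}^{*}$ can decrease toward $0$, and $u$ grows polynomially in these variables via the homogeneity \eqref{eq: value_homo}, establishing $\E[e^{-\delta t}u(X_t,H_{1,t},H_{2,t})]\to 0$ requires careful joint-moment control. The strategy will be to dominate $H_{1,t}^{*}\le (z_\alpha^{-1}\inf_{s\le t}Y_s(y^{*}))^{-1/\gamma}$ and $H_{2,t}^{*}\ge(z_\beta^{-1}\sup_{s\le t}Y_s(y^{*}))^{-1/\gamma}$, use the explicit geometric-Brownian law of $Y_s$, and invoke Assumption \ref{assume: exist} (which guarantees $K_0>0$, the Merton-type discount margin) to absorb the polynomial growth. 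Once this estimate is in place uniformly over admissible controls, the verification argument closes and equality $u(x,h_1,h_2)=J(c^{*},\pi^{*})$ follows.
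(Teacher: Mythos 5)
Your plan is a \emph{primal} verification: apply It\^o's formula to $e^{-\delta t}u(X_t,H_{1,t},H_{2,t})$ along an arbitrary admissible pair, use the primal HJB variational inequality to control the drift and the marginal inequalities to control the $dH_{1,t}$, $dH_{2,t}$ terms, then pass to the limit. The paper's proof is structurally different: it never applies It\^o to the controlled wealth. It first bounds the objective by $\E\big[\int_0^\infty e^{-\delta t}\big((U(c_t)-Y_t(y)c_t)dt-\alpha dV_{1t}^+-\beta dV_{2t}^-\big)\big]+yx$ using only the budget constraint $\E[\int_0^\infty c_t\xi_t dt]\le x$ (the chain \eqref{eq: verification_main}), and then shows in Lemma \ref{lemma: dual_value_def} that this dual functional is dominated by $v(y,h_1,h_2)$ by applying It\^o to $e^{-\delta t}v(Y_t(y),H_{1,t},H_{2,t})$, where $Y_t(y)$ is an \emph{exogenous} geometric Brownian motion. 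The payoff of that choice is that every integrability and transversality estimate (Lemmas \ref{lemma: N_martingale} and \ref{lemma: transversality}) reduces to moments of $Y_t(y)$ alone, because in each region of \eqref{eq: solu_dual_value} the reference levels $H_{1,t},H_{2,t}$ are comparable to $Y_t(y)^{-1/\gamma}$ up to constants, so the bounds hold uniformly over admissible consumption plans. Equality is then restored via Lemma \ref{lemma: y_opt} and the martingale property of $M^c$ under the feedback consumption. Your treatment of $y^*$ (monotonicity of the budget map) and of the initial jump/reflection structure of $H_1^*,H_2^*$ matches the paper's.

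The genuine gap in your route is the transversality step for \emph{arbitrary} admissible controls. To close the inequality $u(x,h_1,h_2)\ge J(c,\pi)$ you need $\liminf_{T\to\infty}\E[e^{-\delta T}u(X_T,H_{1,T},H_{2,T})]\ge 0$ after localizing the stochastic integral $\int_0^\cdot e^{-\delta s}\sigma\pi_s u_x\,dW_s$, which for a merely admissible $\pi$ is only a local martingale. The domination you propose, $H_{1,t}^*\le(z_\alpha^{-1}\inf_{s\le t}Y_s(y^*))^{-1/\gamma}$ and its counterpart for $H_2^*$, controls only the \emph{candidate} strategy; for a generic admissible $(c,\pi)$ the wealth $X_T$ has no explicit law and no relation to $Y$, so the ``joint-moment control via the geometric-Brownian law'' is simply not available, and a bound ``uniformly over admissible controls'' cannot be obtained this way. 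When $\gamma\in(0,1)$ one can rescue the upper bound by nonnegativity of $u$ and Fatou, but the paper allows $\gamma>1$, where $U<0$ and $u$ need not be bounded below, and there your argument does not close as stated. This is precisely the difficulty the paper's dual supermartingale construction is designed to sidestep. A secondary, smaller issue is that you assert rather than derive that the Legendre inverse $u$ is $C^2$ in $x$, $C^1$ in $(h_1,h_2)$, and satisfies the primal variational inequality together with $u_{h_1}\le\alpha V'(h_1)$ and $-u_{h_2}\le\beta V'(h_2)$ globally; transferring these from the dual inequality \eqref{eq: HJB_dual_ineq} requires the strict convexity of $v$ (Lemma \ref{lemma: v_convex}) and some care at the four interfaces, none of which the paper needs in this form.
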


Using the dual relationship between $u$ and $v$, we have the optimal choice $x$ satisfying $u_x(x,h) = y$ admits the expression $x = g(\cdot, h_1, h_2) = -v_y(\cdot,h_1, h_2)$. Define $f(\cdot, h_1, h_2)$ as the inverse of $g(\cdot, h_1, h_2)$, then $u(x,h_1,h_2) = v(f(x,h_1,h_2),h_1,h_2) + xf(x,h_1,h_2)$.
Note that $f$ should have piecewise form across different regions.
By the definition of $g$, the invertibility of the map $x\mapsto g(x,h_1,h_2)$ is guaranteed by the following lemma.
\begin{lemma}\label{lemma: v_convex}
The dual value function $v(y, h_1, h_2)$ in \eqref{eq: solu_dual_value} is strictly convex in $y$, i.e., $v_{yy}(y, h_1, h_2) > 0$,  so that the inverse Legendre transform $u(x, h_1, h_2) = \inf\limits_{y>0}\big(v(y,h_1,h_2) + xy\big)$ is well defined.
Moreover, it implies that the feedback optimal portfolio $\pi^*(y, h_1,h_2) > 0$ all the time.
\end{lemma}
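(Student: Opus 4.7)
The plan is to show $v_{yy}(y,h_1,h_2) > 0$ pointwise by treating each of the five regions of \eqref{eq: solu_dual_value} separately and then patching them together using global $C^2$ regularity of $v$. The $C^2$ regularity is immediate: at the two inner thresholds $y = h_1^{-\gamma}$ and $y = h_2^{-\gamma}$, smooth fit delivers $C^1$ agreement, and continuity of $\tilde U$ at these points combined with the dual HJB equation \eqref{eq: HJB_dual} forces continuity of $v_{yy}$ as well; at the two adjustment thresholds $y = z_\alpha h_1^{-\gamma}$ and $y = z_\beta h_2^{-\gamma}$ the super-contact conditions \eqref{eq: dual_boundary_adjust2} yield $v_{yy}$-continuity directly.

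\textbf{Handling the adjustment regions.} For Regions 4 and 5 I would differentiate the identity
\[
v(y, h_1, h_2) = v(y, \bar h_1(y), h_2) + \frac{\alpha}{1-\gamma}\bigl(h_1^{1-\gamma} - \bar h_1(y)^{1-\gamma}\bigr)
\]
twice in $y$. The first-order condition $v_{h_1}(y, \bar h_1(y), h_2) = \alpha V'(\bar h_1(y))$ from \eqref{eq: dual_boundary_adjust1} cancels the $\bar h_1'(y)$ terms in the first derivative, so $v_y(y,h_1,h_2) = v_y(y,\bar h_1(y),h_2)$. Differentiating a second time and invoking the super-contact condition $v_{yh_1}(y,\bar h_1(y),h_2) = 0$ from \eqref{eq: dual_boundary_adjust2} gives $v_{yy}(y,h_1,h_2) = v_{yy}(y,\bar h_1(y),h_2)$, which is the Region 1 formula evaluated on its left boundary $y = z_\alpha \bar h_1(y)^{-\gamma}$. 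A symmetric reduction sends Region 5 to the boundary of Region 3, so it suffices to verify $v_{yy} > 0$ on the three no-adjustment regions.

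\textbf{Sign analysis on the no-adjustment regions.} Direct differentiation of \eqref{eq: solu_dual_value} expresses $v_{yy}$ on Regions 1, 2, 3 as a linear combination of $y^{m_1-2}$ and $y^{m_2-2}$, together with an extra $y^{\gamma^*-2}$ term present only in Region 2. The Merton-like Region 2 contribution has coefficient $\tfrac{2(\gamma^*-1)}{\kappa^2(\gamma^*-m_1)(\gamma^*-m_2)}$, which is strictly positive because $\gamma^*-1 = -1/\gamma < 0$, $\gamma^*-m_1 < 0$ (since $m_1 > 1 > \gamma^*$), and $\gamma^*-m_2 > 0$ (from the root range in Proposition \ref{prop: solu_dual_value}). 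The main obstacle is to sign the $y^{m_1-2}$ and $y^{m_2-2}$ contributions, whose prefactors depend on $C_1,\ldots,C_6$ through \eqref{eq: C_solu}. I would substitute the defining algebraic identities \eqref{eq: z_alpha_beta} into the expressions for $C_2$ and $C_5$ to rewrite them in a manifestly positive form, exploiting the constraints $z_\alpha \in (0,1-\alpha\delta]$, $z_\beta \in [1+\beta\delta,\infty)$, Assumption \ref{assume: SA}, and the sign pattern $m_2 < 0 < 1 < m_1$ with $\gamma^* < 1$. The remaining $C_1, C_3, C_4, C_6$ are then controlled by the recursive identities in \eqref{eq: C_solu}, and within each region a short monotonicity check in $y$ on the defining interval reduces global positivity of $v_{yy}$ to an endpoint inequality that follows from the same algebraic equations.

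\textbf{Portfolio positivity and conclusion.} Once $v_{yy} > 0$ is established on all five regions, the map $y \mapsto v_y(y,h_1,h_2)$ is strictly decreasing, so for every $x > 0$ the minimizer $y(x,h_1,h_2)$ of $y \mapsto v(y,h_1,h_2) + xy$ is unique and characterized by $v_y = -x$, making the inverse Legendre transform $u(x,h_1,h_2) = v(y,h_1,h_2) + xy$ well defined. Standard duality then gives $u_x = y$ and $u_{xx} = -1/v_{yy}$, so that
\[
\pi^\dag(y,h_1,h_2) \;=\; -\frac{\mu-r}{\sigma^2}\frac{u_x}{u_{xx}} \;=\; \frac{\mu-r}{\sigma^2}\, y\, v_{yy}(y,h_1,h_2) \;>\; 0
\]
since $\mu > r$ and $y > 0$; piece-by-piece differentiation of \eqref{eq: solu_dual_value} reproduces the explicit expression \eqref{eq: portfolio_feedback}.
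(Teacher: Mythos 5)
Your proposal follows essentially the same route as the paper: reduce the two adjustment regions to the no-adjustment ones via the first-order condition \eqref{eq: dual_boundary_adjust1} and the super-contact condition \eqref{eq: dual_boundary_adjust2}, establish $C_2>0$ and $C_5>0$ from the algebraic equations \eqref{eq: z_alpha_beta} together with $z_\alpha\in(0,1-\alpha\delta]$ and $z_\beta\in[1+\beta\delta,\infty)$, propagate positivity to $C_1$ and $C_6$ through the recursions in \eqref{eq: C_solu}, and handle the middle region by comparing $C_3,C_4$ with $C_5,C_2$ and checking an endpoint identity that makes the lower bound vanish. The only part left as a sketch (the ``monotonicity check plus endpoint inequality'' on $h_1^{-\gamma}\le y\le h_2^{-\gamma}$) is precisely the chain of inequalities the paper carries out, so the argument is correct and matches the paper's proof.
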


Thanks to Lemma \ref{lemma: v_convex}, we can apply the inverse Legendre transform to the solution $v(y, h_1, h_2)$, and thus characterize the following four boundary curves $x_\mathrm{lavs}(h_1, h_2)$, $x_\mathrm{peak}(h_1, h_2)$, $x_\mathrm{valy}(h_1, h_2)$, $x_\mathrm{gloom}(h_1, h_2)$ that
\begin{equation}\label{eq: x_boundaries}
\begin{aligned}
x_\mathrm{lavs}(h_1, h_2) &:= -m_1C_1(h_1, h_2)(z_\alpha h_1^{-\gamma})^{m_1-1} - m_2C_2(h_1)(z_\alpha h_1^{-\gamma})^{m_2-1} + \frac{h_1}{r},\\
x_\mathrm{peak}(h_1, h_2) &:= -m_1C_1(h_1, h_2)(h_1^{-\gamma})^{m_1-1} - m_2C_2(h_1)(h_1^{-\gamma})^{m_2-1} + \frac{h_1}{r}, \\
x_\mathrm{valy}(h_1, h_2) &:= -m_1C_5(h_2)(h_2^{-\gamma})^{m_1-1} - m_2C_6(h_1, h_2)(h_2^{-\gamma})^{m_2-1} + \frac{h_2}{r}, \\
x_\mathrm{gloom}(h_1, h_2) &:= -m_1C_5(h_2)(z_\beta h_2^{-\gamma})^{m_1-1} - m_2C_6(h_1, h_2)(z_\beta h_2^{-\gamma})^{m_2-1} + \frac{h_2}{r},
\end{aligned}
\end{equation}
and the optimal feedback function of optimal consumption satisfies: (i) $c^*(x, h_1, h_2) > h_1$ if $x >x_\mathrm{lavs}(h_1, h_2)$; (ii) $c^*(x, h_1, h_2) = h_1$ if $x\in [x_\mathrm{peak}(h_1, h_2), x_\mathrm{lavs}(h_1, h_2)]$; (iii) $c^*(x, h_1, h_2) \in (h_2, h_1)$ if $x\in (x_\mathrm{valy}(h_1, h_2), x_\mathrm{peak}(h_1,h_2))$; (iv) $c^*(x, h_1, h_2) = h_2$ if $x\in [x_\mathrm{gloom}(h_1, h_2), x_\mathrm{valy}(h_1, h_2)]$, and (v) $c^*(x, h_1, h_2) < h_2$ if $x < x_\mathrm{gloom}(h_1, h_2)$.\\
In addition, we define $f(x, h_1, h_2)$ to be the respective solutions to the five equations that
%
%\begin{equation}\label{eq: f_obtain}
%\begin{aligned}
%x =& -m_1C_1\big(\bh_1\big(f(x, h_1, h_2)\big), h_2\big)f(x, h_1, h_2)^{m_1-1} \\
%&- m_2C_2(\bh_1\big(f(x, h_1, h_2)\big)\big)f(x, h_1, h_2)^{m_2-1}+\frac{\bh_1\big(f(x, h_1, h_2)\big)}{r}, &\mbox{if }& x > x_\mathrm{lavs}(h_1, h_2), \\
%x =& -m_1C_1(h_1, h_2)f(x, h_1, h_2)^{m_1-1} - m_2C_2(h_1)f(x, h_1, h_2)^{m_2-1}+\frac{h_1}{r}, &\mbox{if }& x_\mathrm{peak}(h_1,h_2) < x \leq x_\mathrm{lavs}(h_1, h_2), \\
%x =& -m_1C_3( h_2)f(x, h_1, h_2)^{m_1-1} - m_2C_4(h_1)f(x, h_1, h_2)^{m_2-1} \\
%&- \frac{2}{\kappa^2 (\gamma^*-m_1)(\gamma^*-m_2)}f(x, h_1, h_2)^{\gamma^*-1}, &\mbox{if }& x_\mathrm{valy}(h_1,h_2) \leq x \leq x_\mathrm{peak}(h_1,h_2), \\
%x = &-m_1C_5( h_2)f(x, h_1, h_2)^{m_1-1} - m_2C_6(h_1, h_2)f(x, h_1, h_2)^{m_2-1}+\frac{h_2}{r}, &\mbox{if }& x_\mathrm{gloom}(h_1,h_2) \leq x < x_\mathrm{valy}(h_1, h_2), \\
%x = &-m_1C_5\big( \uh_2\big(f(x, h_1, h_2)\big)\big)f(x, h_1, h_2)^{m_1-1} \\
%&- m_2C_6\big(h_1, \uh_2\big(f(x, h_1, h_2)\big)\big)f(x, h_1, h_2)^{m_2-1}+\frac{\uh_2\big(f(x, h_1, h_2)\big)}{r}, & \mbox{if }& x < x_\mathrm{gloom}(h_1, h_2).
%\end{aligned}
%\end{equation}
\begin{equation}\label{eq: f_obtain}
\begin{aligned}
x =& -m_1C_1\big(\bh_1\big(f\big), h_2\big)f^{m_1-1}- m_2C_2(\bh_1\big(f\big)\big)f^{m_2-1}+\frac{\bh_1\big(f\big)}{r}, &\mbox{if }& x > x_\mathrm{lavs}, \\
x =& -m_1C_1(h_1, h_2)f^{m_1-1} - m_2C_2(h_1)f^{m_2-1}+\frac{h_1}{r}, &\mbox{if }& x_\mathrm{peak} < x \leq x_\mathrm{lavs}, \\
x =& -m_1C_3( h_2)f^{m_1-1} - m_2C_4(h_1)f^{m_2-1} - \frac{2}{\kappa^2 (\gamma^*-m_1)(\gamma^*-m_2)}f^{\gamma^*-1}, &\mbox{if }& x_\mathrm{valy} \leq x \leq x_\mathrm{peak}, \\
x = &-m_1C_5( h_2)f^{m_1-1} - m_2C_6(h_1, h_2)f^{m_2-1}+\frac{h_2}{r}, &\mbox{if }& x_\mathrm{gloom} \leq x < x_\mathrm{valy}, \\
x = &-m_1C_5\big( \uh_2\big(f\big)\big)f^{m_1-1} - m_2C_6\big(h_1, \uh_2\big(f(x, h_1, h_2)\big)\big)f^{m_2-1}+\frac{\uh_2\big(f\big)}{r}, & \mbox{if }& x < x_\mathrm{gloom}.
\end{aligned}
\end{equation}
Based on boundaries \eqref{eq: x_boundaries} and transform \eqref{eq: f_obtain}, the following corollary shows the analytical form for the value function, optimal consumption and optimal portfolio for problem \eqref{eq: preference}, whose proof is given in Section \ref{sec: proof_verification}.

\begin{corollary}\label{cor: main_res}
For $(x, h_1 ,h_2) \in \mathbb{R}_+^3$ and $h_1\geq h_2$, the value function $u(x,h_1,h_2)$ can be expressed in a piecewise form that
\begin{equation}\label{eq: solu_value}
\begin{aligned}
&u(x, h_1, h_2) = \\
&\left\{\begin{aligned}
&C_1(h_1,h_2)f(x, h_1, h_2)^{m_1} + C_2(h_1)f(x, h_1,h_2)^{m_2} \\
&+ \frac{h_1^{1-\gamma}}{\delta(1-\gamma)} - \frac{f(x, h_1, h_2)h_1}{r} + xf(x, h_1, h_2),  &\mbox{ if }& x_\mathrm{peak}(h_1,h_2) < x \leq x_\mathrm{lavs}(h_1, h_2), \\
&C_3(h_2)f(x,  h_2)^{m_1} + C_4(h_1)f(x, h_1)^{m_2} \\
& + \frac{2}{\kappa^2 \gamma^*(\gamma^*-m_1)(\gamma^*-m_2)}f(x, h_1, h_2)^{\gamma^*} + xf(x, h_1, h_2),  &\mbox{ if }& x_\mathrm{valy}(h_1,h_2) \leq x \leq x_\mathrm{peak}(h_1,h_2), \\
&C_5(h_2)f(x, h_1, h_2)^{m_1} + C_6(h_1,h_2)f(x, h_1, h_2)^{m_2} \\
&+ \frac{h_2^{1-\gamma}}{\delta(1-\gamma)} - \frac{f(x, h_1, h_2)h_2}{r} + xf(x, h_1, h_2),  &\mbox{ if }&  x_\mathrm{gloom}(h_1,h_2) \leq x < x_\mathrm{valy}(h_1, h_2), \\
&u\big(x, \bh_1\big(f(x, h_1, h_2)\big), h_2 \big) + \frac{\alpha}{1-\gamma}\bigg(h_1^{1-\gamma} - \bh_1\big(f(x, h_1, h_2)\big)^{1-\gamma}\bigg), &\mbox{ if }& x > x_\mathrm{lavs}(h_1, h_2), \\
&u\big(x, h_1, \uh_2\big(f(x, h_1, h_2)\big)\big) - \frac{\beta}{1-\gamma}\bigg(h_2^{1-\gamma} - \uh_2\big(f(x, h_1, h_2)\big)^{1-\gamma}\bigg), &\mbox{ if }& x < x_\mathrm{gloom}(h_1, h_2).
\end{aligned}\right.
\end{aligned}
\end{equation}
In addition, the feedback optimal consumption is proposed as follows:
\begin{equation}\label{eq: solu_consume}
c^*(x, h_1, h_2) =
\left\{\begin{aligned}
&h_1,  &\mbox{ if }& x_\mathrm{peak}(h_1,h_2) < x \leq x_\mathrm{lavs}(h_1, h_2), \\
&f(x, h_1, h_2)^{-\frac1\gamma},  &\mbox{ if }& x_\mathrm{valy}(h_1,h_2) \leq x \leq x_\mathrm{peak}(h_1,h_2), \\
&h_2,  &\mbox{ if }&  x_\mathrm{gloom}(h_1,h_2) \leq x < x_\mathrm{valy}(h_1, h_2), \\
&\bh_1\big(f(x, h_1, h_2)\big), &\mbox{ if }& x > x_\mathrm{lavs}(h_1, h_2), \\
&\uh_2\big(f(x, h_1, h_2)\big), &\mbox{ if }& x < x_\mathrm{gloom}(h_1, h_2),
\end{aligned}\right.
\end{equation}
and the optimal portfolio
\begin{equation}\label{eq: solu_portfolio}
\begin{aligned}
&\pi^*(x, h_1, h_2) = \frac{\mu-r}{\sigma^2}\cdot\\
&\left\{\begin{aligned}
&m_1(m_1-1)C_1(h_1, h_2)f(x, h_1, h_2)^{m_1-1} \\
&+ m_2(m_2-1)C_2(h_1)f(x, h_1, h_2)^{m_2-1},  &\mbox{ if }& x_\mathrm{peak}(h_1,h_2) < x \leq x_\mathrm{lavs}(h_1, h_2), \\
&m_1(m_1-1)C_3( h_2)f(x, h_1, h_2)^{m_1-1} \\
&+ m_2(m_2-1)C_4(h_1)f(x, h_1, h_2)^{m_2-1} \\
&+ \frac{2(\gamma^*-1)}{\kappa^2(\gamma^*-m_1)(\gamma^*-m_2)}f(x, h_1, h_2)^{\gamma^*-1},  &\mbox{ if }& x_\mathrm{valy}(h_1,h_2) \leq x \leq x_\mathrm{peak}(h_1,h_2), \\
&m_1(m_1-1)C_5( h_2)f(x, h_1, h_2)^{m_1-1} \\
&+ m_2(m_2-1)C_6(h_1,h_2)f(x, h_1, h_2)^{m_2-1},  &\mbox{ if }&  x_\mathrm{gloom}(h_1,h_2) \leq x < x_\mathrm{valy}(h_1, h_2), \\
&m_1(m_1-1)C_1\big(\bh_1\big(f(x, h_1, h_2)\big), h_2\big)f(x, h_1, h_2)^{m_1-1} \\
&+ m_2(m_2-1)C_2\big(\bh_1\big(f(x, h_1, h_2)\big), h_2\big)f(x, h_1, h_2)^{m_2-1}, &\mbox{ if }& x > x_\mathrm{lavs}(h_1, h_2), \\
&m_1(m_1-1)C_5\big(h_1, \uh_2\big(f(x, h_1, h_2)\big)\big)f(x, h_1, h_2)^{m_1-1} \\
&+ m_2(m_2-1)C_6\big(h_1,\uh_2\big(f(x, h_1, h_2)\big)\big)f(x, h_1, h_2)^{m_2-1}, &\mbox{ if }& x < x_\mathrm{gloom}(h_1, h_2).
\end{aligned}\right.
\end{aligned}
\end{equation}
In addition, for any initial state tuple $(X_0^*, H_{1, 0}^*, H_{2, 0}^*) = (x, h_1, h_2)\in\mathbb{R}_+^3$ and $h_1\geq h_2$, the stochastic differential equation
\begin{equation}\label{eq: SDE_opt}
dX_t^* = rX_t^*dt + \pi^*(\mu-r)dt - c^*dt + \pi^*\sigma dW_t,
\end{equation}
has a unique strong solution under the optimal feedback control $(c^*, \pi^*)$.
\end{corollary}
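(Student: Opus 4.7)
The plan is to derive Corollary \ref{cor: main_res} as a direct translation of the dual-side results (Proposition \ref{prop: solu_dual_value} and Theorem \ref{thm: verification}) into the primal wealth variable, via the inverse Legendre transform made available by Lemma \ref{lemma: v_convex}. Since $v(\cdot,h_1,h_2)$ is strictly convex, the map $y\mapsto -v_y(y,h_1,h_2)$ is strictly decreasing and defines a bijection between $y\in(0,\infty)$ and $x\in(0,\infty)$; its inverse is exactly the function $f(x,h_1,h_2)$, and the duality identity $u(x,h_1,h_2)=v(f(x,h_1,h_2),h_1,h_2)+xf(x,h_1,h_2)$ holds. Substituting the piecewise expression \eqref{eq: solu_dual_value} for $v$ and its $y$-derivative produces the five equations \eqref{eq: f_obtain} that characterize $f$ in each region.

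Next I would read off the four threshold curves in \eqref{eq: x_boundaries} by evaluating $x=-v_y(y,h_1,h_2)$ at the four dual thresholds $y=z_\alpha h_1^{-\gamma},\,h_1^{-\gamma},\,h_2^{-\gamma},\,z_\beta h_2^{-\gamma}$; the smooth-fit conditions used in Proposition \ref{prop: solu_dual_value} guarantee $v_y$ is continuous, so the wealth-side thresholds match across adjacent regions. Plugging $y=f(x,h_1,h_2)$ into the Legendre identity then yields the piecewise formula \eqref{eq: solu_value} for $u$. The feedback forms \eqref{eq: solu_consume} and \eqref{eq: solu_portfolio} for $c^*$ and $\pi^*$ follow by substituting $y=f(x,h_1,h_2)$ into \eqref{eq: consume_feedback} and \eqref{eq: portfolio_feedback}, which were already verified to be optimal by Theorem \ref{thm: verification}. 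The characterization of the consumption regions (i)--(v) in terms of $x$ is immediate from the fact that $f$ is a strictly decreasing bijection between $x$ and $y$, so the dual ordering of thresholds reverses into the wealth ordering.

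Finally, I must show that the primal SDE \eqref{eq: SDE_opt} is well posed. My plan is to construct the solution pathwise from the dual side. Given $y^*=y^*(x,h_1,h_2)$ (existence and uniqueness of which follow from the budget-constraint argument in Theorem \ref{thm: verification}), define $Y_t^*=y^*e^{\delta t}\xi_t$ — an explicit geometric Brownian motion — and set $H_{1,t}^*=h_1\vee(\inf_{s\leq t}Y_s^*/z_\alpha)^{-1/\gamma}$, $H_{2,t}^*=h_2\wedge(\sup_{s\leq t}Y_s^*/z_\beta)^{-1/\gamma}$, and $X_t^*:=-v_y(Y_t^*,H_{1,t}^*,H_{2,t}^*)$. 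An Itô expansion of $X_t^*$ using the PDE \eqref{eq: HJB_dual} and the smoothness of $v$ on each region produces exactly the drift $rX_t^*+(\mu-r)\pi^*-c^*$ and diffusion $\sigma\pi^*$ of \eqref{eq: SDE_opt}. Uniqueness follows because on each of the five open regions the coefficients $c^\dag$, $\pi^\dag$ are locally Lipschitz in $x$, and the boundary crossings correspond either to the smooth interior boundaries $x_\mathrm{peak},x_\mathrm{valy}$ (where coefficients agree by smooth fit) or to the reflection-type boundaries $x_\mathrm{lavs},x_\mathrm{gloom}$.

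The main obstacle will be this last Itô verification at the reflecting boundaries: when $H_{1,t}^*$ strictly increases, singular terms $v_{h_1}(Y_t^*,H_{1,t}^*,H_{2,t}^*)\,dH_{1,t}^*$ appear in $dX_t^*$, and analogously for $H_{2,t}^*$. These must cancel in the expansion of $-v_y$ against $(Y^*,H_1^*,H_2^*)$; the cancellation is precisely the content of the super-contact conditions \eqref{eq: dual_boundary_adjust2}, which force $v_{yh_1}=0$ on $\{Y^*=z_\alpha (H_1^*)^{-\gamma}\}$ and $v_{yh_2}=0$ on $\{Y^*=z_\beta (H_2^*)^{-\gamma}\}$. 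Once the local-time contributions are shown to vanish, the standard strong-solution argument for regime-switching SDEs with continuous, locally Lipschitz coefficients completes the proof.
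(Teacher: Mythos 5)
Your first two paragraphs (inverse Legendre transform via Lemma \ref{lemma: v_convex}, reading off the thresholds \eqref{eq: x_boundaries} from $x=-v_y$ at the dual switching points, and substituting $y=f(x,h_1,h_2)$ into \eqref{eq: solu_dual_value}, \eqref{eq: consume_feedback} and \eqref{eq: portfolio_feedback}) reproduce exactly what the paper does; indeed the paper treats this translation as immediate and regards the \emph{only} substantive content of the corollary's proof to be the well-posedness of the SDE \eqref{eq: SDE_opt}.

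For that last part you take a genuinely different route. The paper proves Lipschitz regularity of the feedback maps: Lemma \ref{lemma: f_regularity} gives $C^1$-regularity of $f$ and the identities $f_{h_i}=-g_{h_i}f_x$, and Lemma \ref{lemma: control_Lipschitz} then bounds $\pi^*_x$, $\pi^*_{h_1}$ and $\pi^*_{h_2}$ region by region (this is the bulk of Section A.4), after which Proposition \ref{prop: SDE_solu_exist} follows from standard SDE theory with Lipschitz coefficients. You instead construct the solution pathwise from the dual side, setting $X_t^*=-v_y(Y_t^*,H_{1,t}^*,H_{2,t}^*)$ with $Y_t^*$ an explicit geometric Brownian motion, and verify the dynamics by It\^o's formula, using the super-contact conditions \eqref{eq: dual_boundary_adjust2} to kill the singular terms $v_{yh_1}\,dH_{1,t}^*$ and $v_{yh_2}\,dH_{2,t}^*$. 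For \emph{existence} this is correct and arguably more transparent than the paper's argument: the drift identity follows from differentiating the dual PDE \eqref{eq: HJB_dual} in $y$, and the diffusion coefficient $\kappa Y^*v_{yy}=\sigma\pi^*$ comes out automatically.

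The genuine gap is in your uniqueness step. The system is not a classical SDE: $H_{1,t}^*$ and $H_{2,t}^*$ are running extrema of a functional of the state, so the coefficients of \eqref{eq: SDE_opt} depend on the whole triple $(x,h_1,h_2)$ and the path enters through the reflection at $x_\mathrm{lavs}$ and $x_\mathrm{gloom}$. Saying that $c^\dag,\pi^\dag$ are ``locally Lipschitz in $x$ on each region'' does not control the difference of two candidate solutions, because their reference processes $H_1^*,H_2^*$ may differ; one needs joint Lipschitz continuity in $(x,h_1,h_2)$ --- precisely the bounds on $\pi^*_{h_1}$ and $\pi^*_{h_2}$ that occupy most of the paper's Lemma \ref{lemma: control_Lipschitz} --- fed into a Gronwall argument that also controls the running-extrema terms. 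Alternatively, staying consistent with your dual construction, you could close the gap by showing that \emph{any} strong solution of \eqref{eq: SDE_opt}, after applying It\^o's formula to $f(X_t,H_{1,t},H_{2,t})$, must satisfy the autonomous dual dynamics and hence coincide with $Y_t(y^*)$, which is unique; but as written neither route is carried out, and the statement you appeal to (``standard strong-solution argument for regime-switching SDEs'') does not cover this path-dependent, reflected setting.
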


\section{Quantitative Properties of the Optimal Control}\label{sec: property}
In this section, we discuss the theoretical properties of the value function, optimal consumption and optimal portfolio for problem \eqref{eq: preference} based on  their analytical form in Corollary \ref{cor: main_res}. The following proposition presents the monotonicity and concavity (convexity) for the boundary curves with respect to the  historical consumption peak and consumption valley levels.
\begin{proposition}\label{prop: boundary_deri}
Boundary curves $x_\mathrm{lavs}(h_1,h_2)$ and $x_\mathrm{peak}(h_1,h_2)$ are convex increasing in $h_1$ and concave increasing in $h_2$.
Boundary curves $x_\mathrm{gloom}(h_1,h_2)$ and $x_\mathrm{valy}(h_1,h_2)$ are convex increasing in $h_2$ and concave increasing in $h_1$. 
\end{proposition}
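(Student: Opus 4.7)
The plan is direct computation from the explicit expressions. The first step is to reduce each of the four boundary curves in \eqref{eq: x_boundaries} to a compact closed form in $(h_1, h_2)$ using \eqref{eq: C_solu}. The key observation is that the $h_1$-dependence of $C_2, C_4$ (and hence of $C_1, C_6$ through the recursions) enters only through pure powers of the form $h_1^{1+\gamma(m_j - 1)}$, while the $h_2$-dependence of $C_5, C_3$ (and hence of $C_6$) enters only through $h_2^{1+\gamma(m_j-1)}$. These exponents cancel exactly against the $(h_1^{-\gamma})^{m_j-1}$ and $(h_2^{-\gamma})^{m_j-1}$ factors in \eqref{eq: x_boundaries}. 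After the cancellation, each boundary curve should reduce to a sum of a term linear in its home reference level and a single mixed monomial $h_1^p h_2^q$ whose exponents are explicit functions of $m_1, m_2, \gamma$; schematically,
$$x_\mathrm{peak}(h_1,h_2) = K_\mathrm{peak} h_1 + L_\mathrm{peak}\, h_2^{1+\gamma(m_1-1)} h_1^{-\gamma(m_1-1)},$$
with $x_\mathrm{lavs}$ of the same shape up to a factor $z_\alpha^{m_1-1}$ on the second term, and symmetrically
$$x_\mathrm{valy}(h_1,h_2) = K_\mathrm{valy} h_2 + L_\mathrm{valy}\, h_1^{1+\gamma(m_2-1)} h_2^{\gamma(1-m_2)},$$
with the $z_\beta^{m_2-1}$-variant for $x_\mathrm{gloom}$.

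Once the curves are in this closed monomial-sum form, the monotonicity and convexity/concavity claims reduce to sign checks on the leading constants $K_\bullet, L_\bullet$, because the exponent signs are fixed: $\gamma(m_1-1)>0$ and $\gamma(1-m_2)>0$ follow from the root structure of the quadratic $\tfrac{\kappa^2}{2}m^2 + (\delta-r-\tfrac{\kappa^2}{2})m - \delta = 0$ (one root $m_1 > 1$, one root $m_2 < 0$) together with $\gamma>0$. I would therefore write out each of $\partial_{h_i} x_\bullet$ and $\partial_{h_i}^2 x_\bullet$ as a signed monomial in $(h_1, h_2)$ and read off the desired sign from the corresponding $K_\bullet$ or $L_\bullet$; the sub-case $\gamma(1-m_2) \lessgtr 1$ needs to be flagged for $x_\mathrm{valy}, x_\mathrm{gloom}$, since the exponent of $h_1$ in the mixed term then switches sign.

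The main obstacle is verifying these sign conditions on the leading constants. The constants are rational functions of $z_\alpha, z_\beta$ combined with $m_1, m_2, \gamma^*, r, \delta, \alpha, \beta$, so the natural strategy is to use the defining equations \eqref{eq: z_alpha_beta} to eliminate the highest powers of $z_\alpha, z_\beta$, reducing every $K_\bullet, L_\bullet$ to a linear-in-$z$ expression, and then apply Assumption \ref{assume: SA} together with the bounds $z_\alpha \in (0, 1-\alpha\delta]$ and $z_\beta \in [1+\beta\delta, \infty)$ to fix the signs. A useful consistency check is the Merton limit $\alpha=\beta=0$, $h_1\to\infty$, $h_2\to 0$: in that limit the no-adjustment region collapses, and the four boundary curves must reduce continuously to the Merton consumption--wealth line $c = K_0 x$, which anchors the signs of the coefficients $K_\bullet$ unambiguously. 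For the remaining mixed-monomial coefficients $L_\bullet$, I would additionally use continuity of $u$ (and of $u_x$) across the four boundaries, together with the strict convexity of $v$ from Lemma \ref{lemma: v_convex}, to derive algebraic relations between the $K_\bullet$ and $L_\bullet$ that pin down the remaining signs consistent with the stated convex/concave pattern.
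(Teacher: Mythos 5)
First, note that the paper never actually writes out a proof of Proposition \ref{prop: boundary_deri}: the appendix proves Proposition \ref{prop: solu_dual_value}, the verification theorem, Lemmas \ref{lemma: v_convex} and \ref{lemma: control_Lipschitz}, Theorem \ref{thm: sensitivity} and Propositions \ref{prop:asymptotic}--\ref{prop:portfolio}, but not this statement, so there is no official argument to compare against. Your first step is nevertheless correct and verifiable: substituting \eqref{eq: C_solu} into \eqref{eq: x_boundaries}, the pure $h_1^{1+\gamma(m_j-1)}$ parts of $C_1$ and $C_2$ cancel against the factors $(h_1^{-\gamma})^{m_j-1}$ to leave a term linear in $h_1$, and one obtains exactly $x_\mathrm{peak}(h_1,h_2)=K_\mathrm{peak}h_1+L_\mathrm{peak}\,h_2^{1+a}h_1^{-a}$ with $a=\gamma(m_1-1)>0$ and $L_\mathrm{peak}=-m_1(A_5-B)$, where $C_5(h_2)=A_5h_2^{1+a}$ and $B$ is the constant in the $C_1-C_3$ increment; the other three curves reduce analogously.

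However, the step you describe as routine --- ``read off the desired sign from the corresponding $K_\bullet$ or $L_\bullet$'' --- is precisely where the argument collapses. The mixed monomial $h_2^{1+a}h_1^{-a}$ is convex in $h_1$ \emph{and} convex in $h_2$ separately (the exponents $-a<0$ and $1+a>1$ both yield positive second derivatives), so $\partial^2_{h_1}x_\mathrm{peak}$ and $\partial^2_{h_2}x_\mathrm{peak}$ both carry the sign of $L_\mathrm{peak}$: no sign assignment can deliver ``convex in $h_1$ and concave in $h_2$'' unless $L_\mathrm{peak}=0$. Worse, since the entire $h_2$-dependence of $x_\mathrm{peak}$ sits in the single term $L_\mathrm{peak}h_2^{1+a}h_1^{-a}$ with exponent $1+a>1$, the claim ``concave increasing in $h_2$'' by itself already forces $L_\mathrm{peak}=0$; a computation with \eqref{eq: z_alpha_beta} shows this identity holds when $\beta=0$ (so that $z_\beta=1$ and $A_5=B$) but has no reason to persist for general $\beta>0$. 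The same obstruction appears for $x_\mathrm{valy}$ and $x_\mathrm{gloom}$, whose mixed monomial $h_1^{1-b}h_2^{b}$ with $b=\gamma(1-m_2)$ is convex in both variables or concave in both according as $b>1$ or $b<1$. So your plan cannot terminate as announced: either you must prove $L_\bullet\equiv 0$ (false in general) or you must conclude that the stated convex/concave pattern fails, i.e.\ the proposition itself needs qualification. Separately, even the monotonicity in the ``home'' variable is not a pure sign check: $\partial_{h_1}x_\mathrm{peak}=K_\mathrm{peak}-aL_\mathrm{peak}(h_2/h_1)^{1+a}$ requires a quantitative comparison of $K_\mathrm{peak}$ against $aL_\mathrm{peak}$ over the admissible region $h_2\le h_1$, not merely positivity of $K_\mathrm{peak}$.
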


Next, we study the problem \eqref{eq: preference} in some extreme cases.
\begin{theorem}\label{thm: sensitivity}
The following properties hold:
\begin{enumerate}
\item If $\alpha = \beta = 0$, then the model degenerates to the Merton model with $z_\alpha = z_\beta = 1$, that is, the peak and lavish points coincide, and the valley and gloom points coincide.
    As $\alpha \rightarrow \frac1\delta$, $z_\alpha\rightarrow0$, the agent does not adjust the consumption valley; as $\beta \rightarrow +\infty$, $z_\beta\rightarrow+\infty$, the agent does not adjust the consumption peak.
\item As the risk aversion parameter $\gamma$ tends towards infinity, the Gloom ratio and Lavish ratio both approximate to $1/r$, namely, for $h_1\geq h_2\geq 0$,
\begin{align*}
\lim_{\gamma\to\infty}\frac{x_\mathrm{lavs}(h_1, h_2)}{c^*(x_\mathrm{lavs}(h_1, h_2),h_1,h_2)}=\lim_{\gamma\to\infty}\frac{x_\mathrm{gloom}(h_1, h_2)}{c^*(x_\mathrm{gloom}(h_1, h_2),h_1,h_2)}=\frac{1}{r}.
\end{align*}
%\item Gloom ratio and Lavish ratio approx to $1/r$? under some assumptions
\end{enumerate}
\end{theorem}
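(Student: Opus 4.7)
\textbf{Proof sketch of Theorem \ref{thm: sensitivity}.}
The strategy rests on the explicit algebraic characterisation of $z_\alpha, z_\beta$ in \eqref{eq: z_alpha_beta} together with the closed-form coefficients \eqref{eq: C_solu} and threshold curves \eqref{eq: x_boundaries}. A structural fact used throughout is that the roots $m_1, m_2$ (and hence $z_\alpha, z_\beta$) depend only on $(\delta, r, \kappa)$ and are independent of $\gamma$.

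For Part 1, I plan to substitute $\alpha = 0$ and the candidate $z_\alpha = 1$ into the first equation of \eqref{eq: z_alpha_beta} and reduce it to a Vieta-type identity using $m_1 + m_2 = 1 - 2(\delta-r)/\kappa^2$, $m_1 m_2 = -2\delta/\kappa^2$, and the equivalent form $\tfrac{\kappa^2}{2}m_i(m_i-1) = (r-\delta)m_i + \delta$; after clearing denominators the equation becomes $\delta r = [(r-\delta)m_1+\delta][(r-\delta)m_2+\delta]$, which is immediate from Vieta. The case $\beta = 0$, $z_\beta = 1$ is symmetric. With $z_\alpha = z_\beta = 1$ the intervals $[z_\alpha h_1^{-\gamma}, h_1^{-\gamma})$ and $(h_2^{-\gamma}, z_\beta h_2^{-\gamma}]$ collapse, so the feedback \eqref{eq: consume_feedback} reduces to the Merton rule $c = y^{-1/\gamma}$. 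For the limit $\alpha \to 1/\delta$, the admissibility window $(0, 1-\alpha\delta]$ shrinks to $\{0\}$, forcing $z_\alpha \to 0$; since $m_1 > 1$ gives $z_\alpha^{m_1} = o(z_\alpha)$ as $z_\alpha \to 0$, balancing the two remaining terms in \eqref{eq: z_alpha_beta} yields $z_\alpha \sim \frac{m_2 r(1/\delta - \alpha)}{m_2-1} \to 0^+$, and the peak-adjustment region $\{y < z_\alpha h_1^{-\gamma}\}$ in \eqref{eq: consume_feedback} shrinks to the empty set. The case $\beta \to \infty$, $z_\beta \to \infty$ is obtained from the second equation of \eqref{eq: z_alpha_beta} by the symmetric balancing $z_\beta \sim \frac{m_1 r(\beta + 1/\delta)}{m_1-1} \to \infty$.

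For Part 2, fix $(h_1, h_2)$ with $h_1 \geq h_2 > 0$ and send $\gamma \to \infty$. At $y = z_\alpha h_1^{-\gamma}$ the feedback \eqref{eq: consume_feedback} gives $c^*(x_\mathrm{lavs}, h_1, h_2) = h_1$, and analogously $c^*(x_\mathrm{gloom}, h_1, h_2) = h_2$, so it suffices to prove $x_\mathrm{lavs}(h_1,h_2)/h_1 \to 1/r$ and $x_\mathrm{gloom}(h_1,h_2)/h_2 \to 1/r$. Inserting \eqref{eq: C_solu} into \eqref{eq: x_boundaries} and dividing by $h_1$ (resp.\ $h_2$), the correction terms split into two types of residuals: (a) the ``diagonal'' contributions, where the $h_1^{1+\gamma(m_i-1)}$ prefactors in $C_1, C_2$ cancel exactly against the power $(z_\alpha h_1^{-\gamma})^{m_i-1}$, leaving constant multiples of $(1-\gamma^*) = 1/\gamma \to 0$; (b) the ``cross'' contributions through $C_3(h_2)$ sitting inside $C_1$, which produce $(h_2/h_1)^{\gamma(m_1-1)}$ and decay exponentially because $m_1 > 1$ and $h_2 \leq h_1$. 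Since $m_1 - \gamma^* \to m_1 - 1 > 0$ and $m_2 - \gamma^* \to m_2 - 1 < 0$ stay bounded away from zero, no hidden blow-up occurs in the denominators of \eqref{eq: C_solu}. The limit for $x_\mathrm{gloom}/h_2$ is obtained by the mirror argument applied to $C_5, C_6$, noting $m_2 < 0$ keeps $z_\beta^{m_2}$ bounded. The main technical obstacle is the careful bookkeeping of asymptotic orders — separating the $O(1/\gamma)$ polynomial decay of the diagonal pieces from the exponential decay of the cross pieces, and confirming that the prefactors $z_\alpha^{\pm m_i}, z_\beta^{\pm m_i}$ are $\gamma$-independent. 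Once both scalings are extracted, the limit $1/r$ follows immediately.
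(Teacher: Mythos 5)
Your proposal is correct and follows essentially the same route as the paper: part (i) is exactly the direct verification the paper leaves implicit (your Vieta-type identity $\delta r = [(r-\delta)m_1+\delta]\,[(r-\delta)m_2+\delta]$ does check out using $m_1+m_2 = 1+\tfrac{2(r-\delta)}{\kappa^2}$ and $m_1m_2=-\tfrac{2\delta}{\kappa^2}$), and part (ii) is the same computation via \eqref{eq: x_boundaries} and \eqref{eq: C_solu}, except that where the paper formally sets $\gamma^*=1$ so that the $(1-\gamma^*)$ factors annihilate all the $C_i$, you pair the prefactors $h^{1+\gamma(m_i-1)}$ with the powers $(z h^{-\gamma})^{m_i-1}$ first and track the resulting $O(1/\gamma)$ and $(h_2/h_1)^{\gamma(m_1-1)}$ orders, which is the more careful way to handle the $\gamma$-dependence hidden in the exponents. (Incidentally, your reading that $z_\alpha\to0$ empties the peak-adjustment region $\{y<z_\alpha h_1^{-\gamma}\}$ is the mathematically correct one; the word ``valley'' at that point of the theorem statement appears to be a typo.)
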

As shown in Theorem \ref{thm: sensitivity}-(i), when both cost parameters equal zero (i.e., $\alpha=\beta=0$), our general formulation \eqref{eq: preference} degenerates to the Merton model. Especially, if we take the proportional cost parameter of upward (downward) adjustment $\alpha=0$ ($\beta=0$), then it captures the optimal consumption problem under only the reference level of the historical consumption valley peak (historical consumption peak). Furthermore, when the upward  adjustment cost parameter $\alpha$ tends to $1/\delta$, the agent will suffer the enormous cost of increasing past spending maximum consumption level comparing to the acquired consumption utility, thus he will never adjust the consumption valley. The case with the downward adjustment cost parameter $\beta\to \infty$ is also similar. Theorem\ref{thm: sensitivity}-(ii) says that when the risk aversion level of the agent is high enough,  the portfolio is almost entirely concentrated in the safe asset and the optimal consumption-wealth ratio should be close to a fraction $r$ of savings, which is consistent with the observations in \cite{GuasoniHubermanR2020MFF}.
%our general formulation \eqref{eq: preference} includes some  

%\begin{theorem}\label{thm:portfolio-wealth-ratio}
%Consider the  optimal feedback control functions $\pi^*(x,h_1,h_2)$ provided in  Corollary \ref{cor: main_res}, then the portfolio-wealth ratio $\pi^*_{\mathrm{ratio}}(x,h_1,h_2):=\frac{\pi^*(x,h_1,h_2)}{x}$ is not large than $\frac{\mu-r}{\sigma^2 \gamma}$ for all $x>0,h_1\geq h_2\geq 0$. Furthermore, there exists  $\hat{x}(h_1,h_2)\in(x_\mathrm{valy}(h_1,h_2),x_\mathrm{peak}(h_1,h_2))$ such that $\pi^*_{\mathrm{ratio}}(x,h_1,h_2)$ is decreasing in $(0,\hat{x}(h_1,h_2))$ and increasing in $(\hat{x}(h_1,h_2),+\infty)$.
%\end{theorem}

The following proposition characterizes the asymptotic properties of optimal consumption and portfolio feedback functions.
\begin{proposition}\label{prop:asymptotic}
Consider the  optimal feedback control functions $\pi^*(x,h_1,h_2)$ and $c^*(x,h_1,h_2)$ provided in  Corollary \ref{cor: main_res}, the following properties hold:
\begin{enumerate}
\item For $h_1\geq h_2\geq 0$, as the wealth level $x\to \infty$, we have that
\begin{align}
\begin{cases}
\displaystyle \bar{\pi}^*_{\mathrm{ratio}}:=\lim_{x\to\infty}\frac{\pi^*(x,h_1,h_2)}{x}=\frac{\mu-r}{\sigma^2 \gamma},\\[1em]
\displaystyle \bar{c}^*_{\mathrm{ratio}}:=\lim_{x\to\infty}\frac{c^*(x,h_1,h_2)}{x}=\frac{\kappa^2(m_1-1)(m_1-\gamma^*)(m_2-\gamma^*)}{2((1-\gamma^*)z_{\alpha}^{m_1-1}-m_1+\gamma^*))}.
\end{cases}
\end{align}
In addition, the function $\alpha \to \bar{c}^*_{\mathrm{ratio}}$ is decreasing and when $\alpha=0$, the limit $\bar{c}^*_{\mathrm{ratio}}$ coincides with the optimal consumption strategy of the classical Merton’s problem in \cite{Mert1971JET}.
\item For $h_1\geq h_2\geq 0$, as the wealth level $x\to 0$, we have that
\begin{align}
\begin{cases}
\displaystyle \underline{\pi}^*_{\mathrm{ratio}}:=\lim_{x\to 0}\frac{\pi^*(x,h_1,h_2)}{x}=\frac{\mu-r}{\sigma^2 \gamma},\\[1em]
\displaystyle \underline{c}^*_{\mathrm{ratio}}:=\lim_{x\to 0}\frac{c^*(x,h_1,h_2)}{x}=\frac{\kappa^2(m_2-1)(m_1-\gamma^*)(m_2-\gamma^*)}{2((1-\gamma^*)z_{\beta}^{m_2-1}-m_2+\gamma^*))}.
\end{cases}
\end{align}
In addition, the function $\beta \to \underline{c}^*_{\mathrm{ratio}}$ is increasing and when $\beta = 0$, the limit $\underline{c}^*_{\mathrm{ratio}}$ coincides with the optimal consumption strategy of the classical Merton’s problem in \cite{Mert1971JET}.
\end{enumerate}
\end{proposition}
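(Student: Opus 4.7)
The plan is to exploit the explicit piecewise representation in Corollary~\ref{cor: main_res}. As $x \to \infty$ with $(h_1, h_2)$ fixed, Proposition~\ref{prop: boundary_deri} ensures that eventually $x > x_\mathrm{lavs}(h_1, h_2)$, so the last branches of \eqref{eq: solu_consume}--\eqref{eq: solu_portfolio} apply: the optimal strategy immediately lifts the running peak to $\tilde h_1 := \bh_1(f(x,h_1,h_2)) = (f/z_\alpha)^{-1/\gamma}$, and $c^* = \tilde h_1$. Strict convexity of $v$ from Lemma~\ref{lemma: v_convex} forces $f \to 0$, hence $\tilde h_1 \to \infty$. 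Since $(x, \tilde h_1, h_2)$ then lies on the boundary $x = x_\mathrm{lavs}(\tilde h_1, h_2)$, computing $\bar c^*_\mathrm{ratio}$ reduces to studying $\lim_{\tilde h_1 \to \infty} \tilde h_1/x_\mathrm{lavs}(\tilde h_1, h_2)$, and similarly for the portfolio via the dual identity $\pi^* = \tfrac{\mu-r}{\sigma^2} y v_{yy}$ evaluated at $y = z_\alpha \tilde h_1^{-\gamma}$.

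Substituting the constants in \eqref{eq: C_solu} into \eqref{eq: x_boundaries} and grouping by powers of $\tilde h_1$, both $x_\mathrm{lavs}(\tilde h_1, h_2)$ and the portfolio quantity decompose into a part proportional to $\tilde h_1$ (from the $h_1$-dependent components of $C_1, C_2$) plus a residual proportional to $\tilde h_1^{-\gamma(m_1-1)}$ (from the $C_3(h_2)$ component of $C_1$). The residual vanishes because $m_1 > 1$ and $\gamma > 0$, leaving two constants $A_\alpha, B_\alpha$ with $\bar c^*_\mathrm{ratio} = 1/B_\alpha$ and $\bar \pi^*_\mathrm{ratio} = \tfrac{\mu-r}{\sigma^2}\, A_\alpha/B_\alpha$. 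The $x \to 0$ case proceeds in parallel in the region $x < x_\mathrm{gloom}(h_1, h_2)$, with $\tilde h_2 := \uh_2(f) \to 0$.

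The main technical hurdle is the algebraic simplification of $A_\alpha, B_\alpha$. Two identities carry the proof: first, Vieta's relations for the quadratic $\tfrac{\kappa^2}{2}m^2 + (\delta-r-\tfrac{\kappa^2}{2})m - \delta = 0$ yield $m_1 m_2 = -2\delta/\kappa^2$ and, after expansion, $(m_1-1)(m_2-1) = -2r/\kappa^2$; second, equation \eqref{eq: z_alpha_beta} allows elimination of $\tfrac{m_1(\alpha\delta-1)}{\delta z_\alpha}$ in terms of $\tfrac{2z_\alpha^{m_1-1}}{\kappa^2(m_1-1)}$ and $\tfrac{m_1(1-m_2)}{r}$. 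Using $1-\gamma^* = 1/\gamma$ throughout, a partial-fraction collapse across the common factor $(m_1-m_2)(m_1-\gamma^*)(m_2-\gamma^*)$ reduces $A_\alpha/B_\alpha$ to $1/\gamma$, giving the Merton portfolio rule $\bar \pi^*_\mathrm{ratio} = \tfrac{\mu-r}{\sigma^2\gamma}$, while $B_\alpha$ simplifies to $\tfrac{2[(1-\gamma^*)z_\alpha^{m_1-1}-(m_1-\gamma^*)]}{\kappa^2(m_1-1)(m_1-\gamma^*)(m_2-\gamma^*)}$, exactly the reciprocal of the stated $\bar c^*_\mathrm{ratio}$. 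The formulas for $\underline \pi^*_\mathrm{ratio}$ and $\underline c^*_\mathrm{ratio}$ follow by the same manipulation applied to the $\beta$-equation in \eqref{eq: z_alpha_beta}.

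For the monotonicity claims, implicit differentiation of \eqref{eq: z_alpha_beta} combined with $(m_1-1)(m_2-1) = -2r/\kappa^2$ gives $\tfrac{dz_\alpha}{d\alpha} = \tfrac{-m_2\kappa^2(m_1-1)}{2(z_\alpha^{m_1-1}-1)} < 0$ on $\alpha \in (0, 1/\delta)$, since $m_2 < 0$ and $z_\alpha^{m_1-1} < 1$. The closed form of $\bar c^*_\mathrm{ratio}$ is increasing in $z_\alpha$ on the admissible range (numerator and denominator are both negative, and the denominator is increasing in $z_\alpha$ since $m_1 > 1$), so composing yields $\bar c^*_\mathrm{ratio}$ decreasing in $\alpha$. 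At $\alpha = 0$, $z_\alpha = 1$ solves \eqref{eq: z_alpha_beta}; substituting reduces the denominator of $\bar c^*_\mathrm{ratio}$ to $1-m_1$, and a direct Vieta computation identifies $-\tfrac{\kappa^2}{2}(m_1-\gamma^*)(m_2-\gamma^*)$ with the Merton rate $K_0$. The $\beta$-assertion is symmetric: $\tfrac{dz_\beta}{d\beta} > 0$ and $\underline c^*_\mathrm{ratio}$ is increasing in $z_\beta$, so the composition is increasing in $\beta$, with $\beta = 0$ again returning $K_0$.
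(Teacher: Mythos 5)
Your proposal is correct and follows essentially the same route as the paper's proof: both isolate the fifth branch of \eqref{eq: f_obtain} (equivalently the boundary identity $x=x_\mathrm{lavs}(\bh_1(f),h_2)$), split off the term proportional to $\bh_1(f)\sim y^{-1/\gamma}$ from the vanishing $C_3(h_2)y^{m_1-1}$ residual, simplify via Vieta's relations and \eqref{eq: z_alpha_beta}, and obtain monotonicity by implicitly differentiating \eqref{eq: z_alpha_beta}. Your reparametrization by $\tilde h_1$ instead of $y$ is cosmetic, and your expression for $dz_\alpha/d\alpha$ (with $\phi_\alpha'$ in the denominator) is in fact the correctly stated version of the derivative the paper uses, reaching the same sign conclusion.
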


Proposition \ref{prop:asymptotic} indicates that when the wealth level is sufficiently high, the agent is more concerned about the cost of updating the past spending maximum of consumption, thus the asymptotic optimal portfolio-wealth ratio depends on the upward adjustment cost parameter $\alpha$; while the case when the wealth level is sufficiently low is in contrast. Furthermore, to avoid high adjustment cost, when the wealth level is sufficiently high (resp. low) , the agent behaves less (resp. more) aggressively in consumption plan compared with the Merton solution. 

On the other hand, Proposition \ref{prop:asymptotic} shows that when the wealth level is sufficiently or low, the impact of adjustment costs with respect to the reference levels of the past spending maximum and minimum on the portfolio decision of the agent becomes negligible. The asymptotic behavior of the optimal portfolio-wealth ratio coincides with the solution of the classical Merton problem. However, the following proposition implies that the agent will become more risk-averse and put less wealth into the risky asset to maintain the current reference level of consumption and avoid the adjustment costs.

\begin{proposition}\label{prop:portfolio}
  The portfolio-wealth ratio $\pi^*_{\mathrm{ratio}}(x,h_1,h_2):=\frac{\pi^*(x,h_1,h_2)}{x}$ is not larger than $\frac{\mu-r}{\sigma^2 \gamma}$ for all $x>0,h_1\geq h_2\geq 0$.
\end{proposition}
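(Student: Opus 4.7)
The plan is to pass to the dual picture, reduce the claim to a monotonicity statement on $y\mapsto y\bigl(-v_y(y,h_1,h_2)\bigr)^\gamma$, dispose of the two adjustment regions using the smooth-fit and super-contact conditions, and then carry out an explicit sign analysis on the three no-adjustment sub-regions of Proposition~\ref{prop: solu_dual_value}.

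Using the duality $x = -v_y(y,h_1,h_2)$ together with $u_{xx}(x,h_1,h_2) = -1/v_{yy}(y,h_1,h_2)$, the feedback portfolio in Theorem~\ref{thm: verification} satisfies $\pi^*(x,h_1,h_2) = \tfrac{\mu-r}{\sigma^2}\,y\,v_{yy}(y,h_1,h_2)$. The desired bound is therefore equivalent to
\begin{equation*}
\gamma\, y\, v_{yy}(y,h_1,h_2) + v_y(y,h_1,h_2) \leq 0 \qquad \text{for all } y>0,\ h_1\geq h_2 \geq 0.
\end{equation*}
Setting $\Phi(y;h_1,h_2) := y\bigl(-v_y(y,h_1,h_2)\bigr)^\gamma$, a direct computation gives $\Phi_y = (-v_y)^{\gamma-1}\bigl[-v_y - \gamma y v_{yy}\bigr]$, and since $-v_y = x > 0$ by Lemma~\ref{lemma: v_convex}, the proposition is equivalent to showing that $\Phi(\cdot;h_1,h_2)$ is non-decreasing.

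For $y < z_\alpha h_1^{-\gamma}$, differentiating the fourth branch of \eqref{eq: solu_dual_value} in $y$ and invoking the smooth-fit condition $v_{h_1}(y,\bh_1(y),h_2) = \alpha\bh_1(y)^{-\gamma}$ from \eqref{eq: dual_boundary_adjust1} makes the $\bh_1'(y)$ terms cancel, giving $v_y(y,h_1,h_2) = v_y(y,\bh_1(y),h_2)$. A second differentiation combined with the super-contact condition $v_{yh_1}(y,\bh_1(y),h_2)=0$ from \eqref{eq: dual_boundary_adjust2} yields in addition $v_{yy}(y,h_1,h_2) = v_{yy}(y,\bh_1(y),h_2)$. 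Consequently $\Phi_y(y;h_1,h_2) = \Phi_y(y;\bh_1(y),h_2)$, the latter being evaluated precisely at the left edge $y = z_\alpha\bh_1(y)^{-\gamma}$ of the first no-adjustment sub-region with the updated peak $\bh_1(y)$. An analogous argument using $v_{h_2}$ and $v_{yh_2}$ reduces the valley-adjustment region $y>z_\beta h_2^{-\gamma}$ to the right edge of the third no-adjustment sub-region. It is therefore enough to verify $\gamma y v_{yy}+v_y\leq 0$ on the three sub-regions $z_\alpha h_1^{-\gamma}\leq y\leq z_\beta h_2^{-\gamma}$; continuity at the four interior thresholds, provided by smooth-fit at $y=h_i^{-\gamma}$ and by the reduction above at $y=z_\alpha h_1^{-\gamma}$ and $y=z_\beta h_2^{-\gamma}$, then extends the inequality to all $y>0$.

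On each of the three sub-regions, substituting \eqref{eq: solu_dual_value} and using the algebraic identity $1+\gamma(m_i-1) = \gamma(m_i-\gamma^*)$ (together with $\gamma(\gamma^*-1)+1=0$, which eliminates the $y^{\gamma^*}$ block in sub-region (b)) produces an expression of the form $\gamma m_1(m_1-\gamma^*)\,\mathcal{C}_1\,y^{m_1-1} + \gamma m_2(m_2-\gamma^*)\,\mathcal{C}_2\,y^{m_2-1}$, possibly augmented by a $-h_i/r$ term in the flat sub-regions (a) and (c), where $\mathcal{C}_1,\mathcal{C}_2$ are among $C_1,\ldots,C_6$. Since $m_1>1>\gamma^*$ and $m_2<\min(\gamma^*,0)$, both prefactors $\gamma m_i(m_i-\gamma^*)$ are positive. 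The closed forms \eqref{eq: C_solu} together with the defining equations \eqref{eq: z_alpha_beta} for $z_\alpha$ and $z_\beta$ are then used to establish the required non-positivity. The main obstacle is precisely this algebraic sign check: the constants in \eqref{eq: C_solu} carry mixed signs, and non-positivity becomes transparent only after the leading-order contributions are cancelled using \eqref{eq: z_alpha_beta}; as a sanity check one can at least confirm that $\Phi_y\to 0$ in the limits $y\to 0^+$ and $y\to +\infty$, consistent with the Merton-like asymptotics of Proposition~\ref{prop:asymptotic}.
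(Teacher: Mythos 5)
Your reduction is the same one the paper uses: writing $\pi^* = \tfrac{\mu-r}{\sigma^2}\,y\,v_{yy}$ and $x=-v_y$, the claim becomes $\gamma y v_{yy}+v_y\leq 0$, and your identity $\gamma(m_i-1)+1=\gamma(m_i-\gamma^*)$ reproduces exactly the paper's function $F(y,h_1,h_2)=m_1(m_1-\gamma^*)C_1y^{m_1-1}+m_2(m_2-\gamma^*)C_2y^{m_2-1}-\tfrac{h_1}{r\gamma}$ (up to the factor $\gamma$). Your treatment of the two adjustment regions via smooth-fit and super-contact, reducing them to the edges $y=z_\alpha \bh_1(y)^{-\gamma}$ and $y=z_\beta\uh_2(y)^{-\gamma}$ of the no-adjustment sub-regions, is correct and slightly cleaner than the paper's explicit recomputation, though the paper's route additionally needs the sign $C_3(h_2)\leq 0$ there.

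The genuine gap is that you stop exactly where the work begins. First, in the middle sub-region the inequality reduces to $m_1(m_1-\gamma^*)C_3(h_2)y^{m_1-1}+m_2(m_2-\gamma^*)C_4(h_1)y^{m_2-1}\leq 0$ with both prefactors positive, so you must prove $C_3(h_2)\leq 0$ and $C_4(h_1)\leq 0$; this is not "transparent" from \eqref{eq: C_solu} and the paper establishes it by showing $C_3$ is decreasing in $z_\beta$ with $C_3|_{z_\beta=1}\leq 0$ and $C_4$ is monotone in $z_\alpha$ with $C_4|_{z_\alpha=1}\leq 0$, using $z_\alpha\in(0,1-\alpha\delta]$ and $z_\beta\in[1+\beta\delta,\infty)$. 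Second, and more importantly, in sub-regions (a) and (c) the two power terms are \emph{positive} (Lemma \ref{lemma: v_convex} gives $C_1,C_2,C_5,C_6>0$), so $F\leq 0$ cannot be read off termwise; one must show the $-h_i/(r\gamma)$ term dominates. The paper does this by observing that $\partial_y F$ changes sign at most once on the interval, so $F$ is maximized at an endpoint, and then verifying $F(z_\alpha h_1^{-\gamma},h_1,h_2)\leq 0$ and $F(h_1^{-\gamma},h_1,h_2)\leq 0$ via the defining equation \eqref{eq: z_alpha_beta} and a monotonicity argument in $z_\alpha$. None of this interior-maximum-at-the-endpoints step appears in your proposal; without it the "algebraic sign check" you defer is not a routine cancellation but the actual content of the proof.
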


%\subsection{Long-run properties}
Next, we also present some long-run properties of the optimal wealth process.
\begin{corollary}\label{thm: long_run}
\begin{enumerate}
%\item If $\delta-r-\frac{\kappa^2}{2} < 0$, the long-run average time spent in the peak region is $1 - z_\alpha^{1+\frac{2(r-\delta)}{\kappa^2}}$, and the long-run average time spent in the valley region is 0.
%    If $\delta-r-\frac{\kappa^2}{2} > 0$, the long-run average time spent in the peak region is $1 - z_\beta^{1+\frac{2(r-\delta)}{\kappa^2}}$, and the long-run average time spent in the peak region is 0.
%    If $\delta-r-\frac{\kappa^2}{2} = 0$, the long-run average times spent in the peak/valley regions are both 0.
\item The long-run average time spent in the peak region is $\big(1 - z_\alpha^{1+\frac{2(r-\delta)}{\kappa^2}}\big)_+$, and the long-run average time spent in the valley region is $\big(1 - z_\beta^{1+\frac{2(r-\delta)}{\kappa^2}}\big)_+$.
    Here $(\cdot)_+$ is defined by $\max(\cdot, 0)$.
\item Starting from initial wealth $x$ and references $h_1$, $h_2$ with $f(x, h_1,h_2)\in\big[z_\alpha h_1^{-\gamma}, z_\beta h_2^{-\gamma}\big]$,
    let us define the first time to adjust the consumption valley $\tau_\mathrm{gloom} = \inf\{t\geq0: X_t = \underline{x}(H_1^*, H_2^*)\}$.
    Then
    $$
    P(\tau_\mathrm{gloom} < \infty|x, h_1, h_2) =
    \begin{cases}
    1, &\mbox{if } \delta-r-\frac{\kappa^2}{2} \geq  0,\\
    \bigg(\frac{z_\beta h_2^{-\gamma}}{f(x, h_1, h_2)}\bigg)^{1 + \frac{2(r-\delta)}{\kappa^2}}, &\mbox{if } \delta-r-\frac{\kappa^2}{2} < 0.
    \end{cases}
    $$
    That is, if $\delta-r-\frac{\kappa^2}{2} < 0$, the spending valley will with positive probability fail to be adjusted.
    In addition, if $\delta-r-\frac{\kappa^2}{2} \geq 0$,
    $$
    \E_{x, h_1, h_2}[\tau_\mathrm{gloom}]
    = \frac{1}{\delta-r-\frac{\kappa^2}{2}}\log\bigg(\frac{z_\beta h_2^{-\gamma}}{f(x, h_1, h_2)}\bigg).
    $$
    In particular, starting from lavish $\big(f(x, h_1, h_2) = z_\alpha h_1^{-\gamma}\big)$,
    $$
    \E_{x, h_1, h_2}[\tau_\mathrm{gloom}] = \frac{1}{\delta-r-\frac{\kappa^2}{2}}\log\bigg(\frac{z_\beta h_1^{\gamma}}{z_\alpha h_2^{\gamma}}\bigg).
    $$

\item Starting from initial wealth $x$ and references $h_1$, $h_2$ with $f(x, h_1,h_2)\in\big[z_\alpha h_1^{-\gamma}, z_\beta h_2^{-\gamma}\big]$,
    let us define the first time to adjust the consumption peak $\tau_\mathrm{lavs} = \inf\{t\geq0: X_t = \bar{x}(H_1^*, H_2^*)\}$.
    Then
    $$
    P(\tau_\mathrm{lavs} < \infty|x, h_1, h_2) =
    \begin{cases}
    1, &\mbox{if } \delta-r-\frac{\kappa^2}{2} \leq  0,\\
    \bigg(\frac{f(x, h_1, h_2)h_1^{\gamma}}{z_\alpha}\bigg)^{-\big(1 + \frac{2(r-\delta)}{\kappa^2}\big)}, &\mbox{if } \delta-r-\frac{\kappa^2}{2} > 0.
    \end{cases}
    $$
    That is, if $\delta-r-\frac{\kappa^2}{2} > 0$, the spending valley will with positive probability fail to be adjusted.
    In addition, if $\delta-r-\frac{\kappa^2}{2} \leq 0$,
    $$
    \E_{x, h_1, h_2}[\tau_\mathrm{lavs}]
    = -\frac{1}{\delta-r-\frac{\kappa^2}{2}}\log\bigg(\frac{f(x, h_1, h_2)h_1^{\gamma}}{z_\alpha}\bigg).
    $$
    In particular, starting from gloom $\big(f(x, h_1, h_2) = z_\beta h_2^{-\gamma}\big)$,
    $$
    \E_{x, h_1, h_2}[\tau_\mathrm{lavs}] = -\frac{1}{\delta-r-\frac{\kappa^2}{2}}\log\bigg(\frac{z_\beta h_1^{\gamma}}{z_\alpha h_2^{\gamma}}\bigg).
    $$
\end{enumerate}
\end{corollary}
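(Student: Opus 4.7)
The plan is to exploit the explicit Markov structure of the dual state $Y_t^* = y^*\exp((\delta-r-\kappa^2/2)t - \kappa W_t)$ provided in Theorem \ref{thm: verification}. Write $L_t := \log Y_t^*$, which is a Brownian motion with drift $\mu := \delta-r-\kappa^2/2$ and volatility $\kappa$. Since $Y_t^*$ is an unreflected GBM and only the references $H_1^*,H_2^*$ adjust, we have the pathwise identifications $z_\alpha H_{1,t}^{*-\gamma} = (z_\alpha h_1^{-\gamma})\wedge\inf_{s\leq t}Y_s^*$ and $z_\beta H_{2,t}^{*-\gamma} = (z_\beta h_2^{-\gamma})\vee\sup_{s\leq t}Y_s^*$, so that after any initial jump, the lower boundary of the continuation region is pinned to the running infimum, and the upper boundary to the running supremum, of $Y^*$. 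All three parts then reduce to classical hitting-time and reflected-BM calculations for $L_t$.

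For parts (ii) and (iii), the hypothesis $f(x,h_1,h_2)\in[z_\alpha h_1^{-\gamma},z_\beta h_2^{-\gamma}]$ means the references remain at $(h_1,h_2)$ until $Y^*$ first exits the interval, so $\tau_\mathrm{gloom}$ is the first hitting time of the fixed level $b := \log(z_\beta h_2^{-\gamma})$ by $L_t$ from below, starting at $L_0 = \log f$. One verifies by It\^o's formula that $(Y_t^*)^p$ is a local martingale for the exponent $p := 1 + 2(r-\delta)/\kappa^2$; when $\mu<0$ (equivalently $p>0$) the stopped process $(Y_{t\wedge\tau_\mathrm{gloom}}^*)^p$ is bounded by $(z_\beta h_2^{-\gamma})^p$, so optional stopping and $(Y_t^*)^p\to 0$ on $\{\tau_\mathrm{gloom}=\infty\}$ give the formula for $P(\tau_\mathrm{gloom}<\infty)$; when $\mu\geq 0$ positive drift makes the hitting time a.s.\ finite and $\E[\tau_\mathrm{gloom}]=(b-L_0)/\mu$ follows from Wald's identity applied to the martingale $L_t - \mu t$. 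Specializing to $f = z_\alpha h_1^{-\gamma}$ (the lavish case) yields the last formula in (ii). Part (iii) is handled symmetrically, with the fixed lower barrier $\log(z_\alpha h_1^{-\gamma})$ and the roles of the sign of $\mu$ swapped.

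For part (i), define the reflected processes
\[
U_t := L_t - \inf_{s\leq t}L_s,\qquad D_t := \sup_{s\leq t}L_s - L_t,
\]
which by the Skorokhod decomposition are drifted Brownian motions on $[0,\infty)$ reflected at $0$. The peak region $\{c^* = h_1\}$ corresponds to $U_t\in[0,-\log z_\alpha]$ and the valley region $\{c^* = h_2\}$ to $D_t\in[0,\log z_\beta]$. When $\mu<0$, $U_t$ is positive recurrent with exponential stationary law of rate $\lambda := -2\mu/\kappa^2 = 1+2(r-\delta)/\kappa^2 > 0$, and the ergodic theorem yields the long-run occupation fraction
\[
\int_0^{-\log z_\alpha}\lambda e^{-\lambda u}\,du \;=\; 1 - z_\alpha^{1+2(r-\delta)/\kappa^2}.
\]
When $\mu\geq 0$, $U_t\to\infty$ a.s., so the occupation fraction of any bounded set vanishes; this is encoded by the $(\cdot)_+$ truncation since $z_\alpha\leq 1$ makes $1-z_\alpha^\lambda\leq 0$ whenever $\lambda\leq 0$. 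The valley formula is obtained by applying the symmetric argument to $D_t$, which is positive recurrent precisely when $\mu>0$.

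The main obstacle I expect is the careful justification that the two reflection mechanisms decouple for the ergodic analysis in part (i): the identity $z_\alpha H_{1,t}^{*-\gamma}=\inf_{s\leq t}Y_s^*$ must be established pathwise, and one must argue that when $\mu<0$ (so $D_t\to\infty$ a.s.\ and $H_2^*$ is eventually frozen), the process $U_t$ evolves genuinely as an intrinsic reflected BM on $[0,\infty)$ so that its stationary distribution controls the occupation time; the symmetric statement handles $D_t$ when $\mu>0$. A secondary subtlety is the borderline case $\mu=0$ (null recurrent reflected BM), where the stationary exponential degenerates and a direct scaling argument is needed to confirm that every bounded set has vanishing long-run occupation, in agreement with the $(1-z_\alpha^0)_+=(1-z_\beta^0)_+=0$ yielded by continuity of the formula in $\mu$.
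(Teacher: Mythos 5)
The paper does not actually include a proof of this corollary in Appendix \ref{sec:Appd} (the proofs there stop at Proposition \ref{prop:portfolio}), so there is no in-paper argument to compare against. Your route --- reducing everything to the explicit log-GBM $L_t=\log Y_t^*$ with drift $\delta-r-\kappa^2/2$, using the pathwise identities $z_\alpha H_{1,t}^{*-\gamma}=(z_\alpha h_1^{-\gamma})\wedge\inf_{s\le t}Y_s^*$ and $z_\beta H_{2,t}^{*-\gamma}=(z_\beta h_2^{-\gamma})\vee\sup_{s\le t}Y_s^*$, the martingale $Y_t^{*\,q}$ with $q=1+\tfrac{2(r-\delta)}{\kappa^2}$ for the first-passage probabilities, Wald for the mean hitting times, and the exponential stationary law of the Skorokhod-reflected processes $U_t,D_t$ for the occupation fractions --- is the natural and correct one, and the decoupling/transient-phase issues you flag in part (i) are exactly the right ones to worry about (and are harmless for long-run averages, since the relevant running extremum a.s.\ passes the initial barrier in finite time in the recurrent case).

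One concrete point you must confront when you actually execute the optional-stopping step: your computation gives $P(\tau_{\mathrm{gloom}}<\infty)=\big(f(x,h_1,h_2)/(z_\beta h_2^{-\gamma})\big)^{1+2(r-\delta)/\kappa^2}$ when $\delta-r-\kappa^2/2<0$, i.e.\ the \emph{reciprocal} of the displayed formula in the corollary; indeed the displayed expression $\big(z_\beta h_2^{-\gamma}/f\big)^{1+2(r-\delta)/\kappa^2}$ has base $\ge 1$ and positive exponent in that regime, so it exceeds $1$ and cannot be a probability. The same inversion occurs in part (iii): the correct value is $\big(f h_1^{\gamma}/z_\alpha\big)^{1+2(r-\delta)/\kappa^2}$ (negative exponent, base $\ge1$), not the stated $\big(f h_1^{\gamma}/z_\alpha\big)^{-(1+2(r-\delta)/\kappa^2)}$. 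These are evidently sign typos in the statement rather than errors in your method; the mean-hitting-time formulas, the ``in particular'' specializations, and the occupation-time formulas in part (i) all check out against your derivation. So: proof strategy correct and complete in outline, but make the final probability formulas explicit and note the discrepancy with the statement as printed.
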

%\begin{theorem}
%The long-run return on the optimal portfolio, $\tilde{r} = \lim\limits_{T\rightarrow+\infty} \frac{1}{T}\int_0^T \big(r + (\mu-r) \frac{\pi_t^*}{X_t}\big)dt$, is
%$$
%\E_{x, h_1, h_2}[\tilde{r}] = ...
%$$
%\end{theorem}
%
%\begin{theorem}
%The cost of ignoring loss aversion in the optimal policy is:
%\end{theorem}

\section{Numerical Examples}\label{sec: numerical}
In this section, we present some examples of numerical analysis using the closed-form value function and feedback optimal controls in Corollary \ref{cor: main_res} and discuss some interesting financial implications.

\begin{figure}[h]
\centering
\includegraphics[width=5in]{./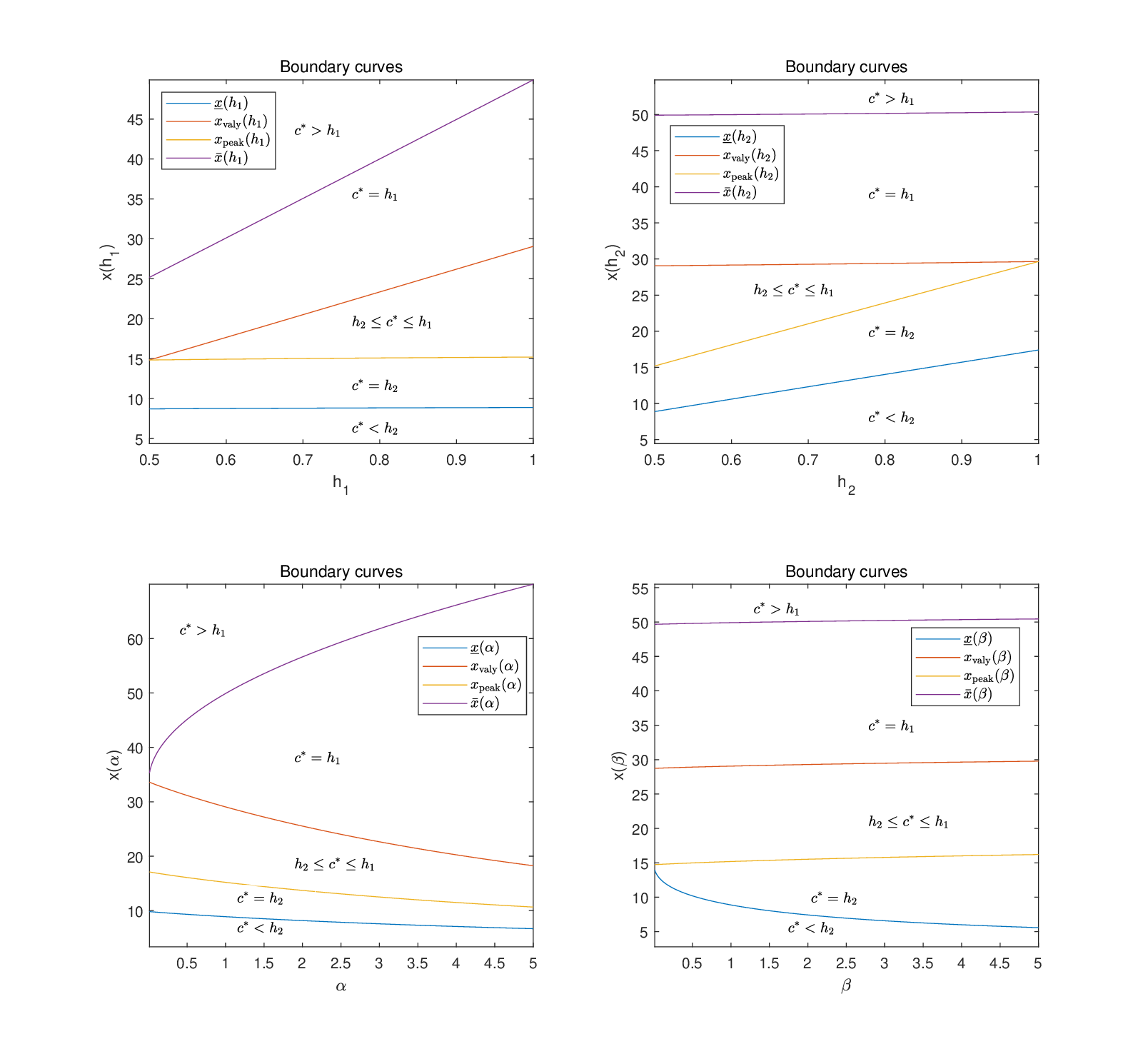}
\vspace{-0.3in}
\caption{{\small Boundary curves}}
\label{fig: boundary}
\end{figure}

Let us choose the following values of the model parameters: $r = 0.0063$, 
$\delta = 0.2$,  
$\mu = 0.15$,  
$\sigma = 0.2$,  
$\gamma = 0.6$,
$\alpha = 1$,
$\beta = 1$,
$h_1 = 1$ and
$h_2 = 0.2$. 
In the following sensitivity analysis, we only change the value of one parameter (while keeping other parameters fixed) to show some sensitivity results with respect to that parameter.

Figure \ref{fig: boundary} graphs all boundary curves $x_\mathrm{gloom}(h_1,h_2;\alpha,\beta)$, $x_\mathrm{valy}(h_1,h_2;\alpha,\beta)$, $x_\mathrm{peak}(h_1,h_2;\alpha,\beta)$, and $x_\mathrm{lavs}(h_1,h_2;\alpha,\beta)$ as functions of $h_1, h_2, \alpha$, and $\beta$.
Although the four curves represent complex nonlinear equations for $h_1$ and $h_2$, the upper-left and upper-right graphs indicate that they are both monotonically increasing with respect to $h_1$ and $h_2$.
On one hand, this is consistent with the results in Proposition \ref{prop: boundary_deri}, which also helps us visualize the curvature of the curves, albeit barely visible in the image.
On the other hand, this result aligns with intuition, indicating that a higher reference level (regardless of which one) would make the agent consider larger wealth thresholds to trigger a change in consumption patterns.
In terms of strategy, we essentially only need to consider the case where $x_\mathrm{gloom}(h_1,h_2) \leq x \leq x_\mathrm{lavs}(h_1,h_2)$ because once $x$ deviates from the range between the two curves, the agent will immediately update $h_1$ or $h_2$, generating new boundaries $x_\mathrm{gloom}(h_1,h_2)$ and $x_\mathrm{lavs}(h_1,h_2)$ that still encompass the wealth $x$.

Upon observing the two bottom-left and bottom-right graphs, we notice that $x_\mathrm{lavs}(h_1,h_2;\alpha,\beta)$ is monotonically increasing with respect to $\alpha$, while the remaining three curves are monotonically decreasing with respect to $\alpha$.
This means that if the agent has a more conservative attitude towards adjusting the spending maximum (with larger $\alpha$), she is less likely to adjust towards the lower consumption valley and more likely to switch from a gloomy consumption state to normal consumption ($h_2 < c < h_1$).
Furthermore, it is also easier for her to transition from normal consumption to a higher consumption mode.
In other words, except for updating lavish consumption, the agent's consumption at other times becomes more aggressive.
This is reasonable because once the agent becomes more cautious in updating the spending maximum, it can create a certain amount of ``wealth redundancy".
This ``wealth redundancy" can then support more aggressive consumption at other times and yield corresponding utility gains.
Similarly, $x_\mathrm{gloom}(h_1,h_2;\alpha,\beta)$ is monotonically decreasing with respect to $\beta$, while the remaining three curves are monotonically increasing with respect to $\beta$.
This means that if the agent has a more conservative attitude towards adjusting the spending minimum (with larger $\beta$), she is less likely to adjust towards the higher consumption peak.
However, she is also less likely to switch from a gloomy consumption state to normal consumption ($h_2 < c < h_1$) and less likely to transition from normal consumption to a lavish consumption mode.
In other words, except for updating gloomy consumption, the agent's consumption at other times becomes more conservative.
This is also reasonable because if the agent is more cautious in adjusting the spending minimum, it makes it easier for wealth to experience so-called ``shortfalls".
These ``shortfalls" need to be compensated for through a more conservative consumption strategy in normal times.

In particular, the boundary curves discussed above can partially explain a reality: pursuing an aggressive or overly conservative consumption strategy does not lead to long-term happiness.
Engaging in high levels of consumption can bring about a significant psychological burden, and such behavior is often unsustainable, resulting in diminishing feelings of happiness over time.
On the other hand, adopting a low-consumption behavior can foster frugal habits but may also lead to missed opportunities for experiencing the beauty of the world.
Therefore, wise individuals consider past references, whether they are peaks or valleys, to help them adjust their strategies in response to the unpredictability of life.
If the agent has experienced ups and downs and has already reached a high level of $h_1$ and a low level of $h_2$, her behavior is less influenced by psychological factors, enabling her to make decisions that truly benefit themselves.
This aligns with the saying, ``I've seen the highs and lows, and both have enriched me".

%\subsection{Sensitivity analysis}
\begin{figure}[htbp]
\centering
\includegraphics[width=5in]{./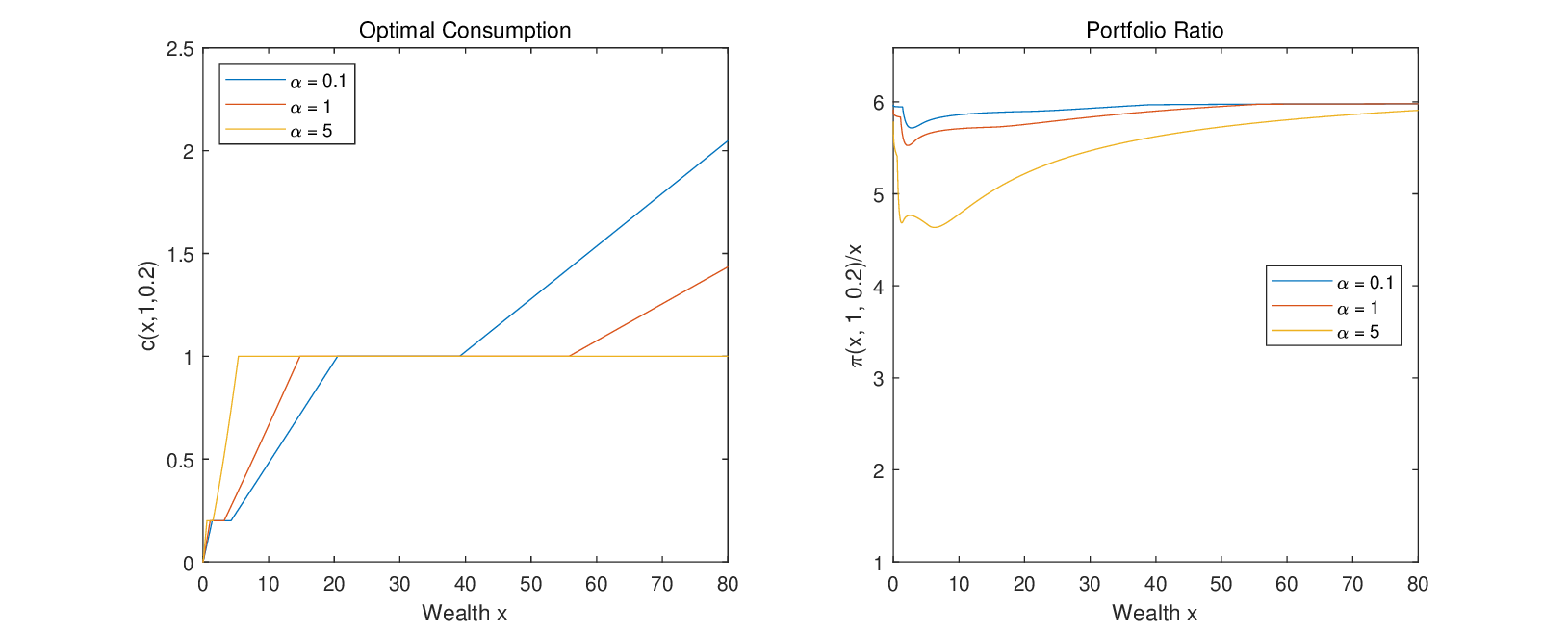}
\vspace{-0.2in}
\caption{{\footnotesize Sensitivity of $\alpha$}}
\label{fig: sen_alpha}
\end{figure}
Let us consider the sensitivity with respect to the adjustment parameter $\alpha\in(0, 1/\delta)$ by plotting in Figure \ref{fig: sen_alpha} some comparison graphs of the value function and the optimal feedback controls (consumption and portfolio).
The left panel once again confirms that the value function is decreasing with respect to $\alpha$.
This can also be clearly seen in the problem formulation \eqref{eq: preference}, where for any fixed admissible strategy, a larger $\alpha$ leads to a greater loss of utility when adjusting the past spending maximum.
From the middle panel, we observe that $x_\mathrm{lavs}(h_1,h_2;\alpha,\beta)$ increases with respect to $\alpha$, while $x_\mathrm{gloom}(h_1,h_2;\alpha,\beta)$, $x_\mathrm{valy}(h_1,h_2;\alpha,\beta)$, and $x_\mathrm{peak}(h_1,h_2;\alpha,\beta)$ decrease with respect to $\alpha$.
Moreover, for a fixed wealth $x$, we find that within the range where $x$ is less than $x_\mathrm{lavs}(h_1,h_2;\alpha,\beta)$, the optimal consumption increases with respect to $\alpha$.
This contradicts our intuition a little, as we tend to assume that conservative individuals will always be conservative.
However, there is a rationale for this, as conservative individuals have the capacity to be more aggressive in certain situations.
When the wealth is sufficiently small, the agent can only choose a gloom consumption level.
In the range $x < x_\mathrm{gloom}(h_1,h_2;\alpha,\beta)$, higher $\alpha$ values ultimately dominate lower $\alpha$ values in terms of consumption.
When the wealth is sufficiently large, the agent can freely choose a lavish consumption level without constraints.
In the range $x > x_\mathrm{lavs}(h_1,h_2;\alpha,\beta)$, lower $\alpha$ values ultimately dominate higher $\alpha$ values in terms of consumption.
We can also observe from the right panel that for fixed $x > 0$, $\pi^*(x, h_1, h_2;\alpha,\beta)$ seems decreasing in $\alpha$, especially in region $x_\mathrm{valy}(h_1,h_2;\alpha,\beta) < x < x_\mathrm{peak}(h_1,h_2;\alpha,\beta)$.
This is because as $\alpha$ increases, consumption in this region becomes more aggressive, prompting the agent to strategically withdraw some investment portfolios to support the consumption plan.
Of particular interest is a fascinating phenomenon where, when wealth $x$ is sufficiently large or sufficiently small, the optimal portfolios corresponding to different $\alpha$'s exhibit almost linear variations and overlap with each other.
Proposition \ref{prop:asymptotic} confirms this result and further emphasizes that these overlapping optimal portfolios degenerate into solutions of the classical Merton's problem, independent of $\alpha$ and $\beta$ values.
This implies that when wealth $x$ is sufficiently large or sufficiently small, regardless of the agent's attitude towards spending references, her portfolio strategy will be hardly affected.
This phenomenon partially aligns with our intuition because when $x$ is sufficiently small, $h_2$ decreases correspondingly, making the influences of $h_1$ and $\alpha$ on the model gradually negligible.
Similarly, when $x$ is sufficiently large, $h_1$ increases correspondingly, while the influences of $h_2$ and $\beta$ on the model can be gradually ignored.
In these cases, the homogeneous property leads to optimal portfolios and consumption exhibiting a tendency towards linear variations with respect to wealth.
Surprisingly, the asymptotic ratio of the optimal portfolio demonstrates unusual robustness, being independent of $\alpha$ and $\beta$.
This may be due to the fact that if one solely relies on reducing funds from the portfolio to increase consumption, it will significantly impact wealth growth, thereby affecting utility.

\begin{figure}[htbp]
\centering
\includegraphics[width=5in]{./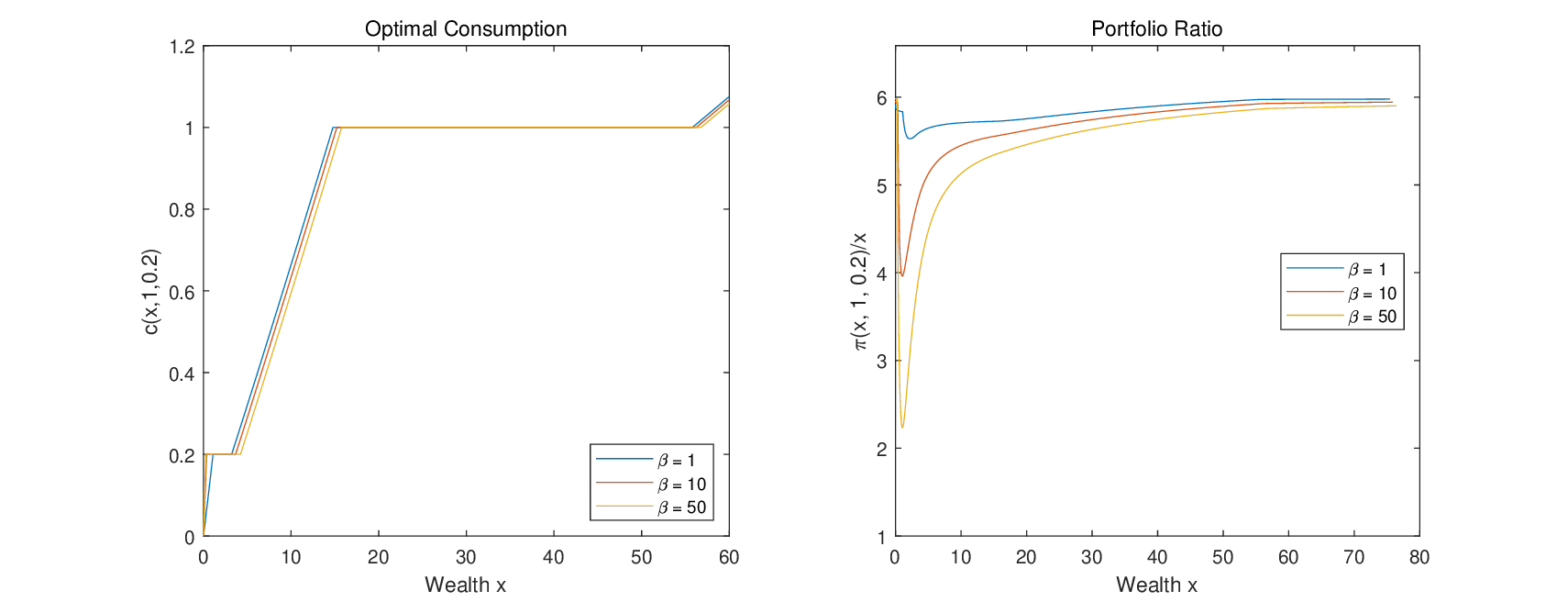}
\vspace{-0.2in}
\caption{{\footnotesize Sensitivity of $\beta$}}
\label{fig: sen_beta}
\end{figure}

Let us next consider the sensitivity with respect to the adjustment parameter $\beta>0$ by plotting in Figure \ref{fig: sen_beta} some comparison graphs of the value function and the optimal feedback controls.
The left panel once again confirms that the value function is decreasing with respect to $\beta$.
This can also be clearly seen in the problem formulation \eqref{eq: preference}, where for any fixed admissible strategy, a larger $\beta$ leads to a greater loss of utility when adjusting the past spending minimum.
From the middle panel, we observe that $x_\mathrm{gloom}(h_1,h_2;\alpha,\beta)$ decreases with respect to $\beta$, while $x_\mathrm{valy}(h_1,h_2;\alpha,\beta)$, $x_\mathrm{peak}(h_1,h_2;\alpha,\beta)$, and $x_\mathrm{lavs}(h_1,h_2;\alpha,\beta)$ increase with respect to $\beta$.
Moreover, for a fixed wealth $x$, we find that within the range where $x$ is greater than $x_\mathrm{gloom}(h_1,h_2;\alpha,\beta)$, the optimal consumption increases with respect to $\beta$.
This coincides with the results for sensitivity analysis of $\alpha$, that conservative individuals have the capacity to be more aggressive in certain situations.
In the range $x < x_\mathrm{gloom}(h_1,h_2;\alpha,\beta)$, larger $\beta$ values ultimately dominate lower $\beta$ values in terms of consumption.
We can also observe that from the right panel that for fixed $x > 0$, $\pi^*(x, h_1, h_2;\alpha,\beta)$ seems decreasing in $\beta$, especially in region $x_\mathrm{valy}(h_1,h_2;\alpha,\beta) < x < x_\mathrm{peak}(h_1,h_2;\alpha,\beta)$.
This is because as $\beta$ increases, the utility loss caused by adjusting the consumption valley is more unacceptable, leading to a preference for a more conservative consumption strategy.
This involves allocating more funds to the risk-free asset, thereby reducing the investment in the portfolio.

Finally, we study the sensitivity with respect to the drift parameter $\mu$ in Figures \ref{fig: sen_mu}. From the left panel, it can be observed that the value function is increasing with respect to $\mu$, which is entirely reasonable. Since the optimal portfolio is always greater than 0, a higher drift leads to larger potential wealth and consequently higher utility.
Similarly, we find that each boundary curve is increasing with respect to $\mu$.
Consequently, a higher $\mu$ results in a smaller optimal consumption.
This aligns with the results of the classical Merton's problem and provides strong evidence for the comparison between quantitative models and intuition.
On one hand, during a bull market, people tend to be more optimistic about the future, which may make them more willing to engage in higher consumption.
On the other hand, due to favorable market development, people may be more inclined to invest more of their wealth in the market, even by reducing expenses to increase the proportion of risky investments.
The numerical results indicate that, in our model, agents' expectations of investment returns outweigh the present happiness derived from current consumption.
Furthermore, the right panel also indicates that as the drift increases, people do indeed increase their level of investment in risky assets.
From Figure \ref{fig: sen_alpha}, for the same level of wealth $x$, we observe that the optimal investment portfolio tends to decrease with respect to alpha and beta in certain cases, but it consistently exhibits a monotonic increase with respect to $\mu$.
This may partially explain the puzzle of the observed equity premium.

%\begin{figure}[htbp]
%\centering
%\includegraphics[width=\textwidth]{./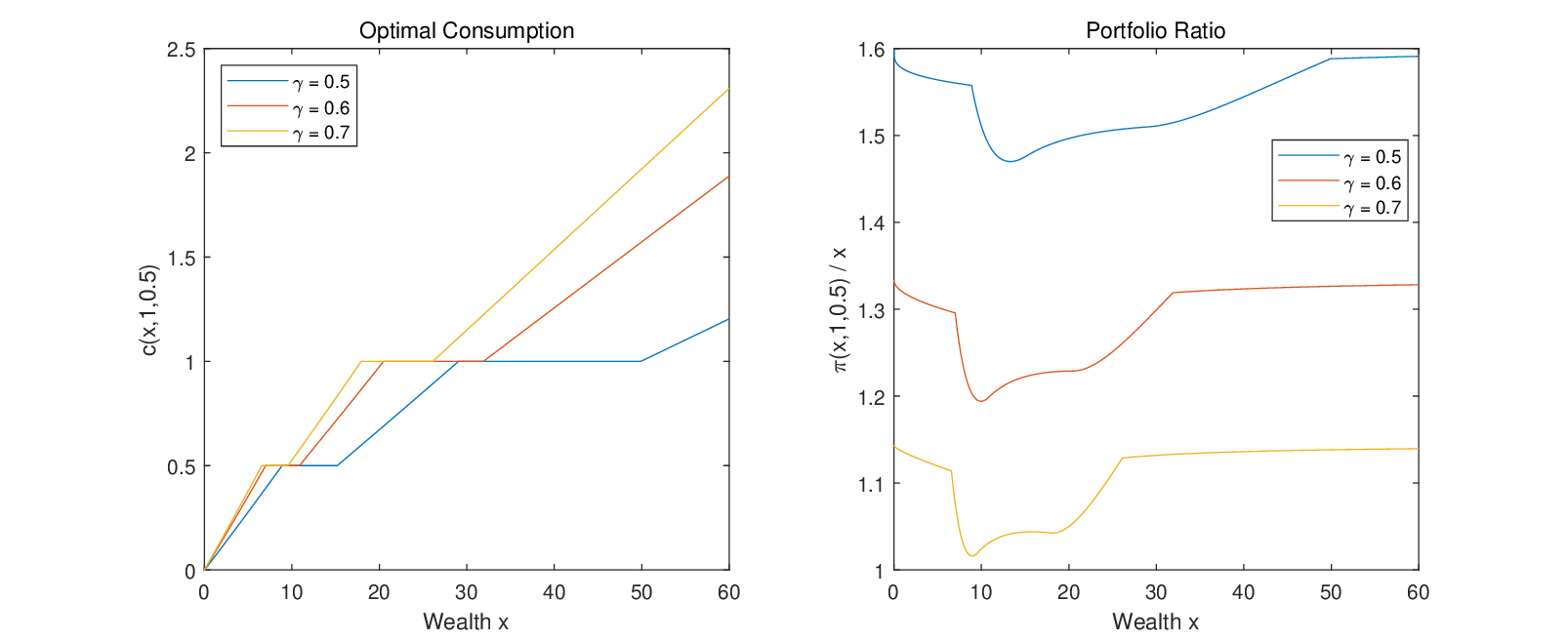}
%\vspace{-0.4in}
%\caption{{\footnotesize Sensitivity of $\gamma$}}
%\label{fig: sen_gamma}
%\end{figure}

%\begin{figure}[htbp]
%\centering
%\includegraphics[width=\textwidth]{./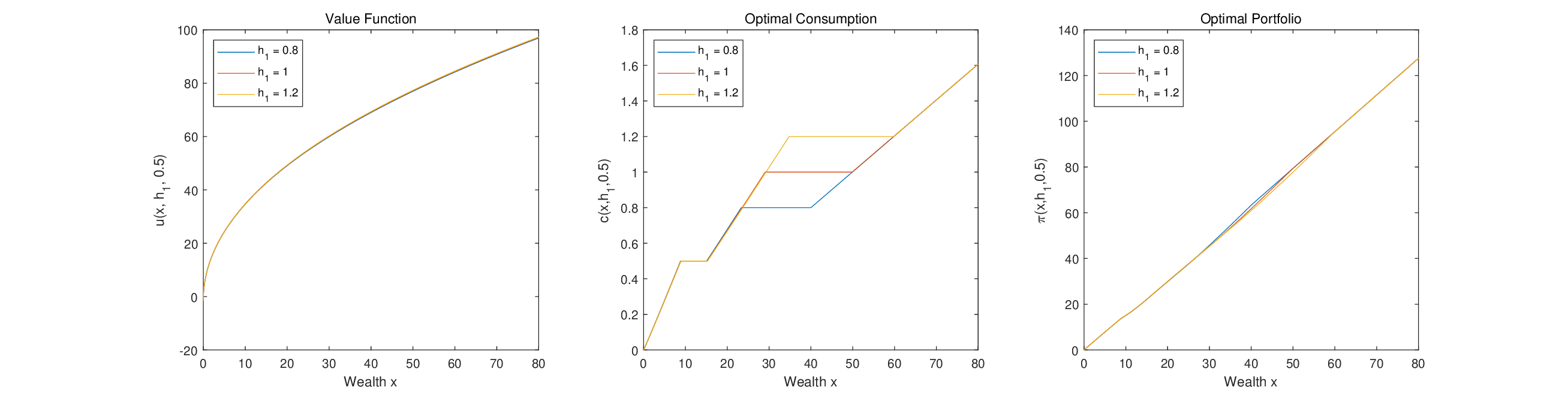}
%\vspace{-0.4in}
%\caption{{\footnotesize Sensitivity of $h_1$}}
%\label{fig: sen_h1}
%\end{figure}

%\begin{figure}[htbp]
%\centering
%\includegraphics[width=\textwidth]{./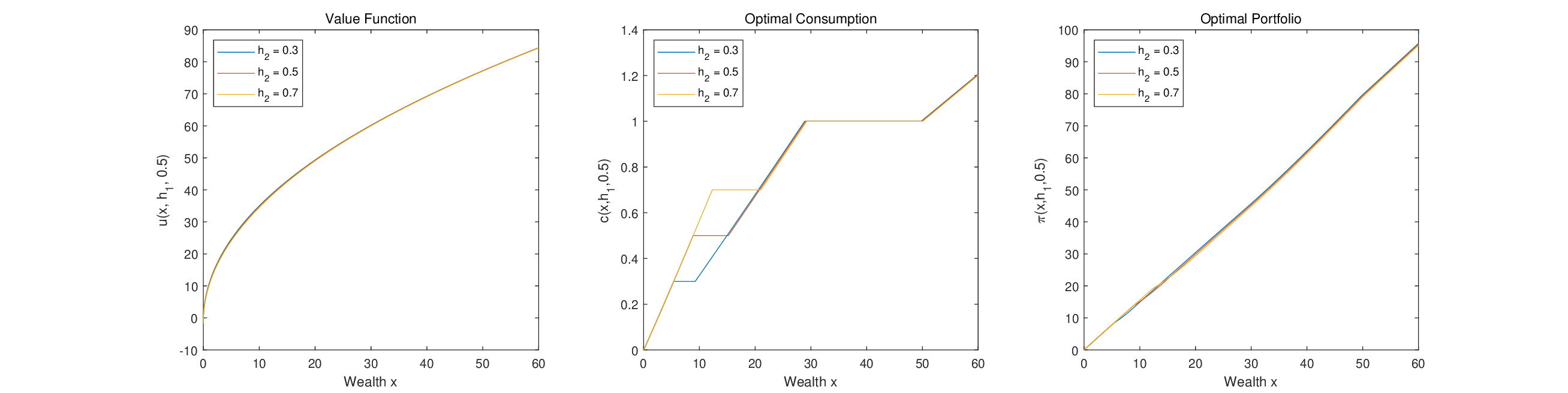}
%\vspace{-0.4in}
%\caption{{\footnotesize Sensitivity of $h_2$}}
%\label{fig: sen_h2}
%\end{figure}

\begin{figure}[htbp]
\centering
\includegraphics[width=5in]{./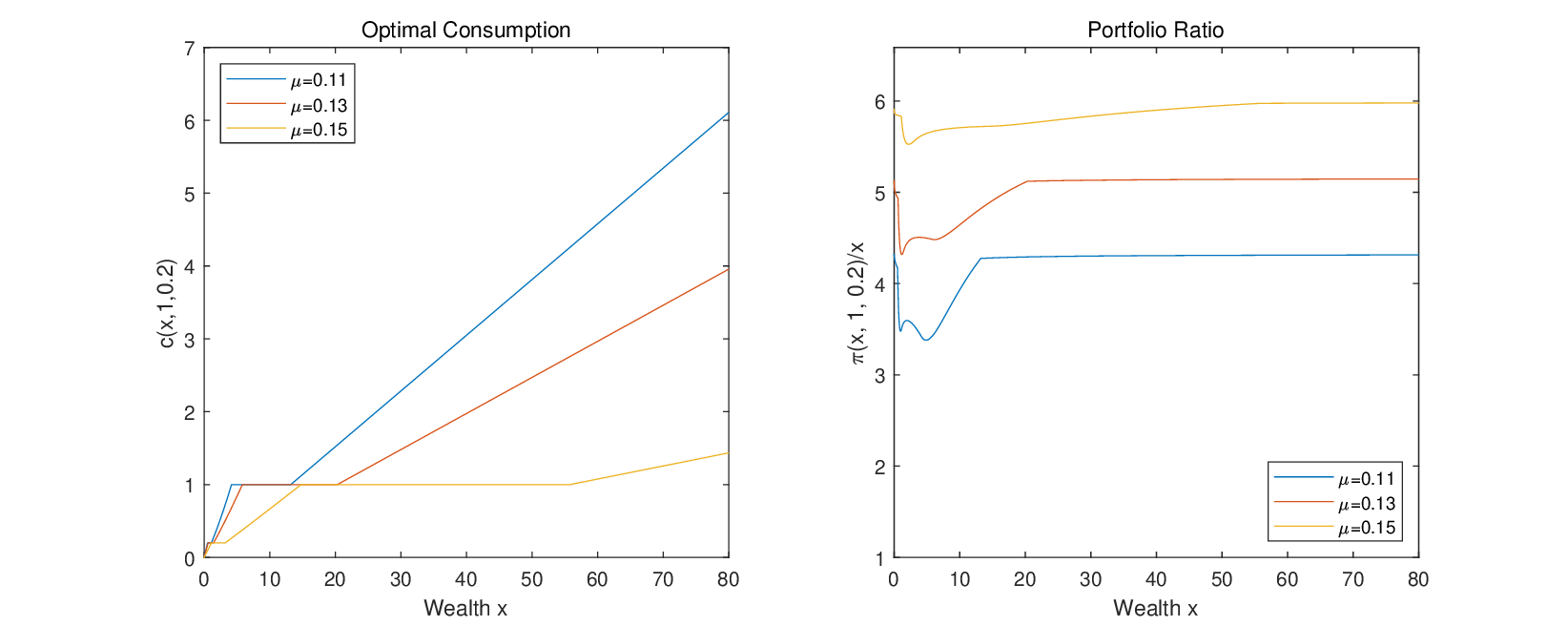}
\vspace{-0.2in}
\caption{{\footnotesize Sensitivity of $\mu$}}
\label{fig: sen_mu}
\end{figure}

%\begin{figure}[htbp]
%\centering
%\includegraphics[width=\textwidth]{./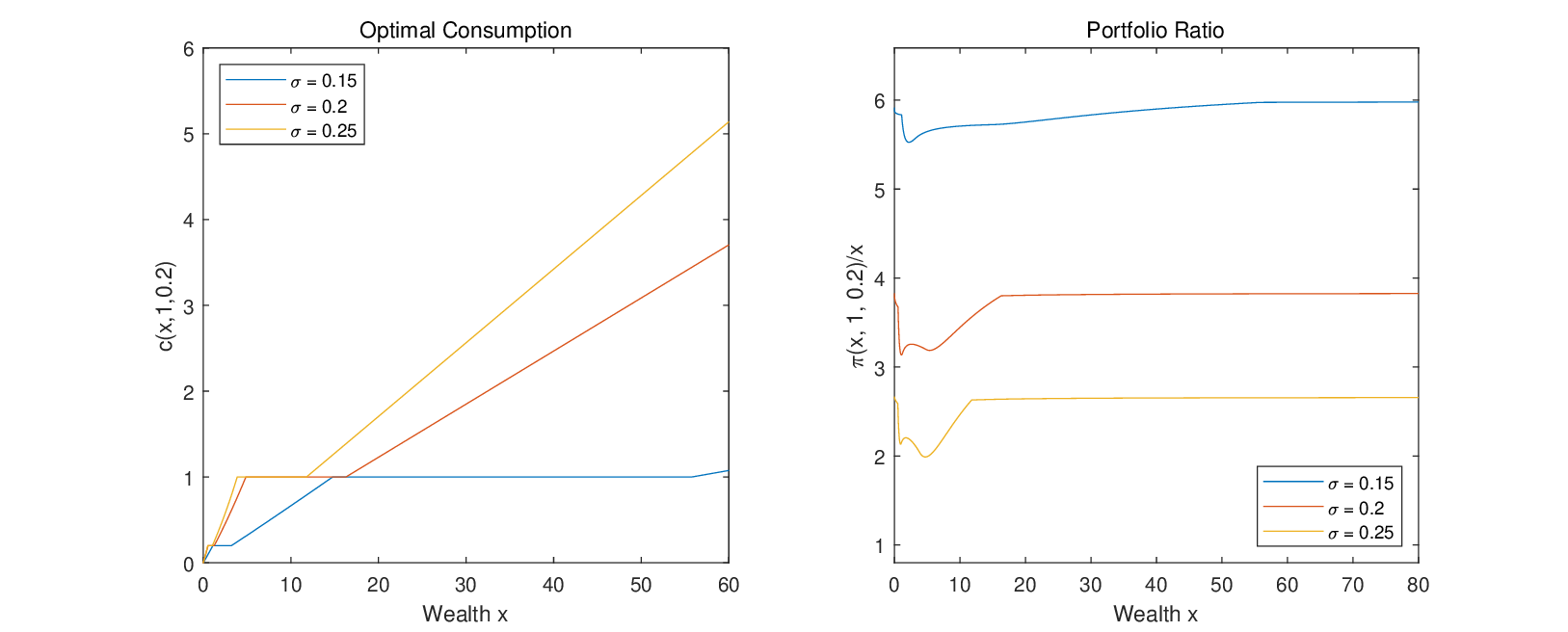}
%\vspace{-0.4in}
%\caption{{\footnotesize Sensitivity of $\sigma$}}
%\label{fig: sen_sigma}
%\end{figure}

%\subsection{A simulated path}
%
%\begin{figure}[htbp]
%\centering
%\includegraphics[width=\textwidth]{./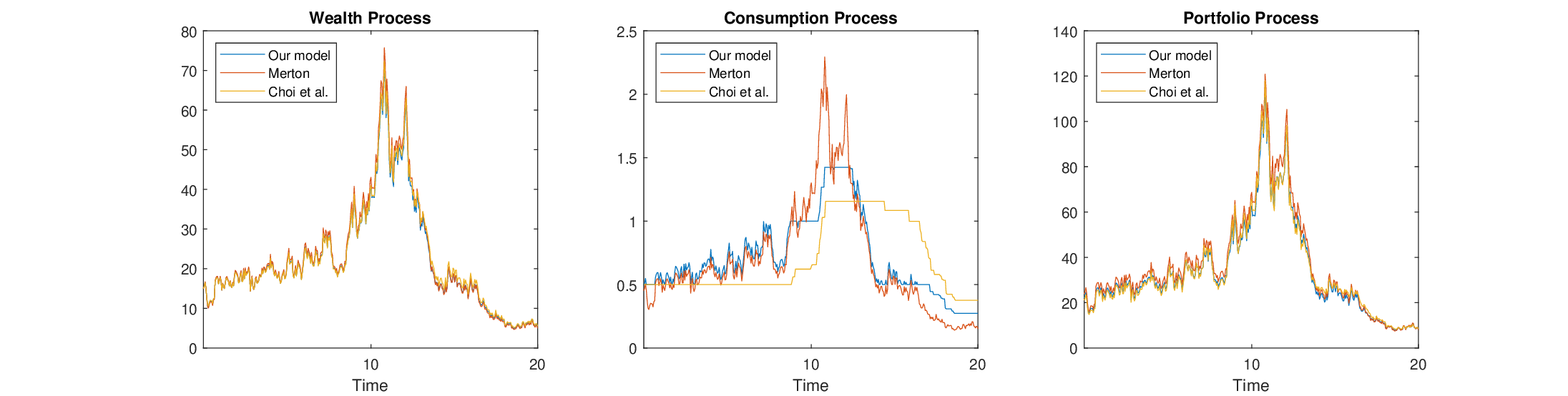}
%\vspace{-0.4in}
%\caption{{\footnotesize Simulated path}}
%\label{fig: path_eg}
%\end{figure}

\begin{figure}[htbp]
\centering
\includegraphics[width=7in]{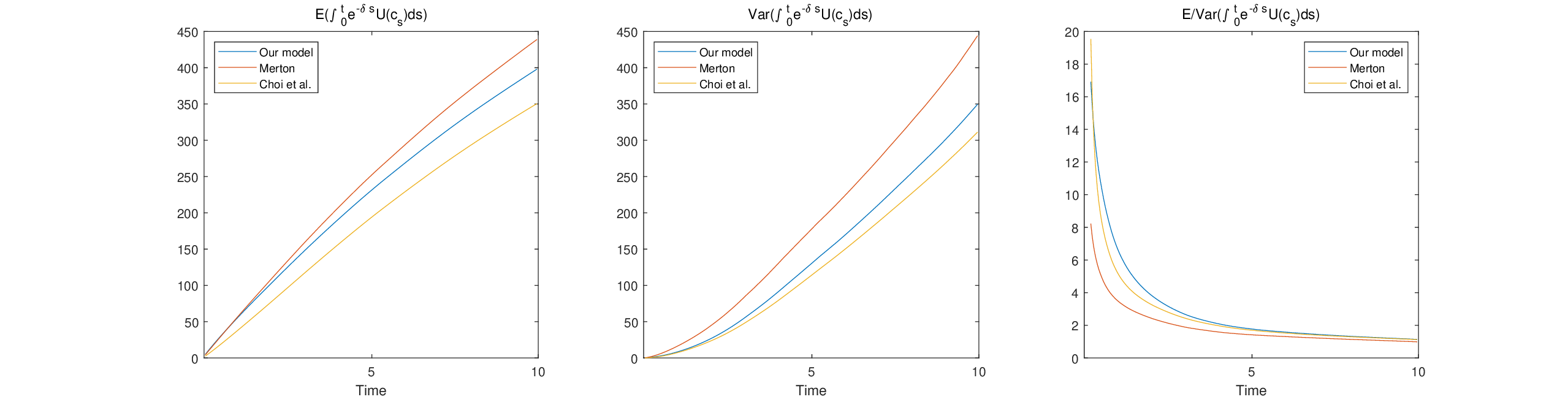}
\vspace{-0.2in}
\caption{{\footnotesize Expectation and Variance of Consumption Process}}
\label{fig: Exp/var}
\end{figure}

Figure \ref{fig: Exp/var} compares expected cumulative consumption utility $\E[\int_0^t e^{-\delta s}U(c_s)ds]$, variance of the cumulative utility $\mbox{Var}(\int_0^t e^{-\delta s}U(c_s)ds)$, and the ratio of expected cumulative utility and variance $\E[\int_0^t e^{-\delta s}U(c_s)ds]/\mbox{Var}(\int_0^t e^{-\delta s}U(c_s)ds)$ of our model, classical Merton's model, and intertemporal loss aversion model in \cite{ChoiJeonKoo2022JET}, respectively.
We select $c_0 = 0.5$ for the model in \cite{ChoiJeonKoo2022JET}.
%Due to our model parameters ($\delta-r-\frac{\kappa^2}{2} = -0.01<0$), the drift of $Y_t(y^*)$ is less than 0, resulting in an increasing trend in the mean and variance of the wealth process and consumption, as depicted in the left and middle panels.
From the left panel, we observe that our model's consumption utility on consumption dominates that of \cite{ChoiJeonKoo2022JET}, indicating that our model is more aggressive in consumption behavior.
%Moreover, as time goes by, our utility growth curve is concave-convex, suggesting a slower initial growth in consumption followed by accelerated growth.
In addition, our consumption utility outperforms that of Merton's problem in the beginning.
As time goes by, Merton's consumption gradually surpasses ours over time.
This is because initially, with relatively small wealth, we tend towards maintaining higher consumption levels.
However, as time progresses and wealth grows, we lean towards more conservative consumption levels, gradually being overtaken by Merton's approach.
In the middle panel, the results align perfectly with our intuition.
Merton's consumption exhibits the highest volatility, \cite{ChoiJeonKoo2022JET}'s consumption exhibits the lowest volatility, and our consumption lies in between.
This is because Merton's problem focuses all its efforts on maximizing the utility benefits from consumption, while Choi's problem primarily concentrates on regulating consumption with minimal changes.
In contrast, our model represents a compromise between the two approaches.
We posit that changes in consumption within a certain reference condition do not elicit psychological feelings of loss, enabling us to achieve greater utility.
However, upon crossing this reference condition, we reasonably acknowledge that agents will experience a sense of loss.
This balance ensures that while maintaining a certain expected utility, we also effectively control the fluctuations in utility.
From the right panel, it is evident that Merton's problem has the lowest utility-variance ratio $\E[\int_0^t e^{-\delta s}U(c_s)ds]/\mbox{Var}(\int_0^t e^{-\delta s}U(c_s)ds)$.
Here, we have not reported the case where $t$ is very small.
This is because our model and Choi's model \cite{ChoiJeonKoo2022JET} maintain flat levels of consumption without making significant changes, leading to a variance approaching 0 at the beginning.
As a result, reporting the ratio becomes meaningless and thus we exclude this portion.
As consumption begins to change, our ratio consistently dominates that of Merton's model, with Choi's model being the highest initially and then quickly overtaken by our model.
From this observation, we can see that our model indeed combines the strengths of both models and this is reflected in the metrics on the right panel.
We have retained a relatively high consumption utility while also ensuring a more robust consumption behavior with lower variance.

\appendix
\section{Proofs}\label{sec:Appd}
\subsection{Proof of Proposition \ref{prop: solu_dual_value}}
\begin{proof}
It is straightforward to see that the linear ODE \eqref{eq: HJB_dual} admits the general solution
$$
v(y,h_1,h_2) =
\left\{\begin{aligned}
&C_1(h_1,h_2)y^{m_1} + C_2(h_1,h_2)y^{m_2} + \frac{h_1^{1-\gamma}}{\delta(1-\gamma)} - \frac{yh_1}{r},  &\mbox{ if }& z_\alpha h_1^{-\gamma} \leq y < h_1^{-\gamma}, \\
&C_3(h_1,h_2)y^{m_1} + C_4(h_1,h_2)y^{m_2} + \frac{2}{\kappa^2 \gamma^*(\gamma^*-m_1)(\gamma^*-m_2)}y^{\gamma^*},  &\mbox{ if }& h_1^{-\gamma} \leq y \leq h_2^{-\gamma}, \\
&C_5(h_1,h_2)y^{m_1} + C_6(h_1,h_2)y^{m_2} + \frac{h_2^{1-\gamma}}{\delta(1-\gamma)} - \frac{yh_2}{r},  &\mbox{ if }& h_2^{-\gamma} < y \leq z_\beta h_2^{-\gamma}.
\end{aligned}\right.
$$
The smooth conditions at $y = h_1^{-\gamma}$ and $y = h_2^{-\gamma}$ give us the equations of $C_1(c,h), \cdots, C_6(c,h)$ that
\begin{align*}
&\big(C_{1}(h_1, h_2) - C_3(h_1,h_2)\big)\big(h_1^{-\gamma}\big)^{m_1} + \big(C_{2}(h_1, h_2) - C_4(h_1,h_2)\big)\big(h_1^{-\gamma}\big)^{m_2} \\
= &- \frac{h_1^{1-\gamma}}{\delta(1-\gamma)} + \frac{h_1^{1-\gamma}}{r}+ \frac{2}{\kappa^2\gamma^*(\gamma^*-m_1)(\gamma^*-m_2)}\big(h_1^{-\gamma}\big)^{\gamma^*}, \\
&m_1\big(C_{1}(h_1, h_2) - C_3(h_1,h_2)\big)\big(h_1^{-\gamma}\big)^{m_1-1} + m_2\big(C_{2}(h_1, h_2) - C_4(h_1,h_2)\big)\big(h_1^{-\gamma}\big)^{m_2-1} \\
= & \frac{h_1}{r}+ \frac{2}{\kappa^2(\gamma^*-m_1)(\gamma^*-m_2)}\big(h_1^{-\gamma}\big)^{\gamma^*-1}, \\
&\big(C_{3}(h_1, h_2) - C_5(h_1,h_2)\big)\big(h_2^{-\gamma}\big)^{m_1} + \big(C_{4}(h_1, h_2) - C_6(h_1,h_2)\big)\big(h_2^{-\gamma}\big)^{m_2} \\
= &\frac{h_2^{1-\gamma}}{\delta(1-\gamma)} - \frac{h_2^{1-\gamma}}{r} - \frac{2}{\kappa^2\gamma^*(\gamma^*-m_1)(\gamma^*-m_2)}\big(h_2^{-\gamma}\big)^{\gamma^*}, \\
&m_1\big(C_{3}(h_1, h_2) - C_5(h_1,h_2)\big)\big(h_2^{-\gamma}\big)^{m_1-1} + m_2\big(C_{4}(h_1, h_2) - C_6(h_1,h_2)\big)\big(h_2^{-\gamma}\big)^{m_2-1} \\
= & -\frac{h_2}{r} - \frac{2}{\kappa^2(\gamma^*-m_1)(\gamma^*-m_2)}\big(h_2^{-\gamma}\big)^{\gamma^*-1}. 
\end{align*}

We can consider them as a linear system of $C_1(h_1,h_2)-C_3(h_1,h_2)$, $C_3(h_1,h_2)-C_5(h_1,h_2)$, $C_2(h_1,h_2)-C_4(h_1,h_2)$, and $C_4(h_1,h_2)-C_6(h_1,h_2)$.
By solving the linear system using the fact that $m_1+m_2 = 1 + \frac{2(r-\delta)}{\kappa^2}$, $m_1m_2 = -\frac{2\delta}{\kappa^2}$, and $\gamma^* = -\frac{1-\gamma}{\gamma}$, we obtain
\begin{equation}\label{eq: C_diff}
\begin{aligned}
C_1(h_1,h_2)-C_3(h_1,h_2) &= \frac{2(1-\gamma^*)}{\kappa^2(m_1-m_2)m_1(m_1-1)(m_1-\gamma^*)}h_1^{1 + \gamma(m_1-1)}, \\
C_2(h_1,h_2)-C_4(h_1,h_2) &= \frac{2(\gamma^*-1)}{\kappa^2(m_1-m_2)m_2(m_2-1)(m_2-\gamma^*)}h_1^{1 + \gamma(m_2-1)}, \\
C_3(h_1,h_2)-C_5(h_1,h_2) &= \frac{2(\gamma^*-1)}{\kappa^2(m_1-m_2)m_1(m_1-1)(m_1-\gamma^*)}h_2^{1 + \gamma(m_1-1)}, \\
C_4(h_1,h_2)-C_6(h_1,h_2) &= \frac{2(1-\gamma^*)}{\kappa^2(m_1-m_2)m_2(m_2-1)(m_2-\gamma^*)}h_2^{1 + \gamma(m_2-1)}. \\
\end{aligned}
\end{equation}
%
%If we consider $yh_2^{\gamma} \in (1, \by(h_1,h_2)h_2^{\gamma})$ and fix $h_2/h_1$, then as $h_2\rightarrow0$ and thus $y\rightarrow\infty$, $h_1\rightarrow0$, we have $v(y, h_1, h_2) = h_2^{1-\gamma}v(yh_2^{\gamma}, h_1/h_2, 1)\rightarrow0$, indicating that $C_5(h_1, h_2) \rightarrow 0$, that is, $C_5(0,0) = 0$.
%If we consider $yh_1^{\gamma}\in (\uy(h_1,h_2)h_1^{\gamma}, 1)$ and fix $h_1/h_2$, then as $h_1\rightarrow\infty$ and thus $y\rightarrow0$, $h_2\rightarrow\infty,$ we have $v(y, h_1, h_2) = h_1^{1-\gamma}v(yh_1^{\gamma}, 1, h_2/h_1)$, indicating that $C_2(h_1, h_2) = O\big(h_1^{1 + \gamma(m_2-1)}\big)\rightarrow 0$, that is, $C_2(+\infty, +\infty) = 0$.

As $h_1\rightarrow +\infty$ and $h_2\rightarrow0$, according to condition \eqref{eq: dual_merton}, we obtain $C_i(+\infty, 0) = 0$ for $i=1,\cdots,6$.
Together with the homogenous property and \eqref{eq: C_diff}, we guess that $C_3(h_1,h_2)$ and $C_5(h_1,h_2)$ do not depend on $h_1$, and $C_2(h_1, h_2)$ and $C_4(h_1, h_2)$ do not depend on $h_2$.
Therefore, $C_{3,h_1}(h_1, h_2) = C_{5, h_1}(h_1, h_2) = 0$, $C_{2, h_2}(h_1, h_2) = C_{4, h_2}(h_1,h_2) = 0$.
In this case, we obtain by \eqref{eq: C_diff} that
$$
\begin{aligned}
C_{1, h_1}(h_1,h_2) &= C_{1, h_1}(h_1,h_2) - C_{3, h_1}(h_1,h_2) = \frac{2}{\kappa^2(m_1-m_2)m_1(m_1-1)}h_1^{\gamma(m_1-1)}, \\
C_{6, h_2}(h_1, h_2) &= -\big(C_{4, h_2}(h_1, h_2) - C_{6, h_2}(h_1, h_2)\big)
= -\frac{2}{\kappa^2(m_1-m_2)m_2(m_2-1)}h_2^{\gamma(m_2-1)}.
\end{aligned}
$$

In addition, according to the boundary conditions that one updates the past spending maximum/minimum \eqref{eq: dual_boundary_adjust1} \eqref{eq: dual_boundary_adjust2}, we derive the following equations:
\begin{equation*}
\begin{aligned}
C_{1,h_1}(h_1, h_2)\uy(h_1,h_2)^{m_1} + C_{2,h_1}(h_1, h_2)\uy(h_1,h_2)^{m_2} + \frac{h_1^{-\gamma}}{\delta} - \frac{\uy(h_1,h_2)}{r}
&= \alpha h_1^{-\gamma}, \\
m_1C_{1,h_1}(h_1, h_2)\uy(h_1,h_2)^{m_1-1} + m_2C_{2,h_1}(h_1, h_2)\uy(h_1,h_2)^{m_2-1} - \frac{1}{r}
&= 0,\\
%m_1(m_1-1)C_{1,h_1}(h_1, h_2)\uy(h_1,h_2)^{m_1-2} + m_2(m_2-1)C_{2,h_1}(h_1, h_2)\uy(h_1,h_2)^{m_2-2}
%&= 0,\\
C_{5,h_2}(h_1, h_2)\by(h_1,h_2)^{m_1} + C_{6,h_2}(h_1, h_2)\by(h_1,h_2)^{m_2} + \frac{h_2^{-\gamma}}{\delta} - \frac{\by(h_1,h_2)}{r}
&= -\beta h_2^{-\gamma}, \\
m_1C_{5,h_2}(h_1, h_2)\by(h_1,h_2)^{m_1-1} + m_2C_{6,h_2}(h_1, h_2)\by(h_1,h_2)^{m_2-1} - \frac{1}{r}
&= 0.
%m_1(m_1-1)C_{5,h_2}(h_1, h_2)\by(h_1,h_2)^{m_1-2} + m_2(m_2-1)C_{6,h_2}(h_1, h_2)\by(h_1,h_2)^{m_2-2}
%&= 0.\\
\end{aligned}
\end{equation*}
The first two equations are a system of two unknown parameters $C_{2,h_1}(h_1,h_2)$ and $\uy(h_1,h_2)$, and the last two equations are a system of two unknown parameters $C_{5, h_2}(h_1, h_2)$ and $\by(h_1,h_2)$.
Solving theses two systems, we obtain
$$
\begin{aligned}
C_{2, h_1}(h_1, h_2) = \frac{1}{m_1-m_2}\bigg(\frac{m_1(\alpha\delta-1)}{\delta}z_\alpha^{-m_2} + \frac{m_1-1}{r}z_\alpha^{1-m_2}\bigg) h_1^{\gamma(m_2-1)},\\
C_{5, h_2}(h_1, h_2) = \frac{1}{m_1-m_2}\bigg(\frac{m_2(\beta\delta+1)}{\delta}z_\beta^{-m_1} + \frac{1-m_2}{r}z_\beta^{1-m_1}\bigg)h_2^{\gamma(m_1-1)},
\end{aligned}
$$
and $z_\alpha$, $z_\beta$ are constants satisfying \eqref{eq: z_alpha_beta}.
We then prove $z_\alpha\in(0, 1-\alpha\delta]$ and $z_\beta \in [1+\beta\delta, +\infty)$.
Let
$$
\begin{aligned}
\phi_\alpha(z) &:= \frac{2}{\kappa^2 m_1(m_1-1)}z^{m_1} + \frac{z}{r}(m_2-1) + m_2\bigg(\alpha - \frac1\delta\bigg), \\
\phi_\beta(z) &:= \frac{2}{\kappa^2 m_2(m_2-1)}z^{m_2} + \frac{z}{r}(m_1-1) - m_1\bigg(\beta + \frac1\delta\bigg).
\end{aligned}
$$
By \eqref{eq: z_alpha_beta}, we have $\phi_\alpha(z_\alpha) = 0$.
Due to the fact that $m_1>1$, $m_2<0$, $m_1+m_2 = 1 + \frac{2}{\kappa^2}(r-\delta)$, $m_1m_2 = -\frac{2\delta}{\kappa^2}$ and assumption \ref{assume: SA}, we easily obtain $\phi_\alpha'(z) = \frac{2}{\kappa^2(m_1-1)}\big(z^{m_1-1}-1\big)< 0$ in $(0,1)$, $\phi_\alpha(0) = m_2\big(\alpha-\frac1\delta\big) > 0$, and $\phi_\alpha(1-\alpha\delta) \leq (1-\alpha\delta)m_2\alpha \leq 0$.
Thus, we conclude that $z_\alpha \in (0, 1-\alpha\delta]$.
Similarly, by \eqref{eq: z_alpha_beta}, we have $\phi_\beta(z_\beta) = 0$, $\phi_\beta'(z) = \frac{2}{\kappa^2(m_2-1)}\big(z^{m_2-1} - 1\big) > 0$ in $(1,+\infty)$, $\lim\limits_{z\rightarrow+\infty}\phi_\beta(z) = +\infty$, and $\phi_\beta(1+\beta\delta) \leq -(1+\beta\delta)m_1\beta \leq 0$.
Thus, we conclude that $z_\beta \in [1+\beta\delta, +\infty)$.

Therefore, it holds that
$$
\begin{aligned}
C_5(h_1, h_2) =& \int_0^{h_2} C_{5, h_2}(h_1, z)dz\\
=
&\frac{1-\gamma^*}{(m_1-m_2)(m_1-\gamma^*)}\bigg(\frac{m_2(\beta\delta+1)}{\delta}z_\beta^{-m_1} + \frac{1-m_2}{r}z_\beta^{1-m_1}\bigg)h_2^{1 + \gamma(m_1-1)},
\end{aligned}
$$
and
$$
\begin{aligned}
C_2(h_1, h_2) =& -\int_{h_1}^\infty C_{2, h_1}(z, h_2)dz \\
=&
\frac{1-\gamma^*}{(m_1-m_2)(m_2-\gamma^*)}\bigg(\frac{m_1(\alpha\delta-1)}{\delta}z_\alpha^{-m_2} + \frac{m_1-1}{r}z_\alpha^{1-m_2}\bigg) h_1^{1+\gamma(m_2-1)}.
\end{aligned}
$$
Together with \eqref{eq: C_diff}, we can obtain $C_1(h_1,h_2), \cdots, C_6(h_1,h_2)$ as in \eqref{eq: C_solu}.
Note that $v_{h_1}(y,h_1,h_2) = \alpha h_1^{-\gamma}$ for $y < z_\alpha h_1^{-\gamma}$, $v_{h_2}(y,h_1,h_2) = \-\beta h_2^{-\gamma}$ for $y > z_\beta h_2^{-\gamma}$, the dual value function has the explicit form in \eqref{eq: solu_dual_value}.
\end{proof}

\subsection{Proof of Theorem \ref{thm: verification} and Corollary \ref{cor: main_res}}\label{sec: proof_verification}
For any admissible strategy $(c,\pi) \in \mathcal{A}(x, h_1, h_2)$, we have the budget constraint
$\E\bigg[\int_0^\infty c_t \xi_t dt\bigg] \leq x$. We distinguish the following two pairs of reference processes, namely $H_{1,t}:= h_1 \vee \sup_{s\leq t}c_s$, $H_{2,t}:= h_2 \wedge \inf_{s\leq t}c_s$, and $H_{1,t}^\dag(y) = h_1\vee \sup_{s\leq t} c^\dag\big(Y_s(y), H_{1,s}^\dag(y), H_{2,s}^\dag(y)\big)$,  $H_{2,t}^\dag(y) = h_2\wedge \inf_{s\leq t} c^\dag\big(Y_s(y), $ $H_{1,s}^\dag(y), H_{2,s}^\dag(y)\big)$.
The two pairs of references are under an arbitrary consumption process $c_t$ and under the feedback consumption process $c^\dag$ with an arbitrary $y > 0$.
Note that the global optimal reference processes will be defined later by $H_{1,t}^* = H_{1,t}^\dag(y^*)$ and $H_{2,t}^* = H_{2,t}^\dag(y^*)$ with $y^*>0$ to be determined.
Let us now further introduce
\begin{equation}\label{eq: H_hat}
\begin{aligned}
\hH_{1,t}(y) &:= h_1 \vee \bigg(z_\alpha^{\frac1\gamma} \big(\inf\limits_{s\leq t} Y_s(y)\big)^{-\frac1\gamma}\bigg),\\
\hH_{2,t}(y) &:= h_2 \wedge \bigg(z_\beta^{\frac1\gamma} \big(\sup\limits_{s\leq t} Y_s(y)\big)^{-\frac1\gamma}\bigg).
\end{aligned}
\end{equation}

For any admissible controls $(c, \pi)\in\mathcal{A}(x, h_1, h_2)$ with reference processes $H_{1,t} = h_1\vee \sup\limits_{s\leq t}c_s$ and $H_{2,t} = h_2\wedge \inf\limits_{s\leq t}c_s$,
for any $y >0$,

\begin{equation}\label{eq: verification_main}
\begin{aligned}
&\E\bigg[\int_0^\infty e^{-\delta t}\big(U(c_t)dt -\alpha dV_{1t}^+ - \beta dV_{2t}^- \big) \bigg]\\
=& \E\bigg[\int_0^\infty e^{-\delta t}\big(U(c_t) - Y_t(y)c_t\big)dt -\alpha dV_{1t}^+ - \beta dV_{2t}^- \big) \bigg] + y\E\bigg[\int_0^\infty c_t \xi_t dt\bigg] \\
\leq& \E\bigg[\int_0^\infty e^{-\delta t}\big(U(c_t) - Y_t(y)c_t\big)dt -\alpha dV_{1t}^+ - \beta dV_{2t}^- \big) \bigg] + yx \\
\leq & v(y, h_1, h_2) + yx.
\end{aligned}
\end{equation}

According to Lemma \ref{lemma: dual_value_def} and Lemma \ref{lemma: y_opt}, all the inequalities becomes equalities with the choices $c_t^* = c^\dag(Y_t(y^*), \hH_{1,t}(y^*), \hH_{2,t}(y^*))$, in which $y^*$ is the solution to the equation
\begin{equation}\label{eq: budget_constraint_eq}
\E\bigg[\int_0^\infty c^\dag(Y_t(y^*), \hH_{1,t}(y^*), \hH_{2,t}(y^*))\xi_t dt \bigg] = x,
\end{equation}
for any $x\geq 0$.
In conclusion, we have
$$
\sup\limits_{(c,\pi)\in\mathcal{A}(x,h_1,h_2)} \E\bigg[\int_0^\infty e^{-\delta t}\big(U(c_t)dt -\alpha dV_{1t}^+ - \beta dV_{2t}^- \big) \bigg] = \inf\limits_{y>0}\big(v(y, h_1, h_2) + yx\big) = u(x, h_1, h_2),
$$
which completes the proof of the verification theorem.

Then we propose the lemmas used in the proof of the Theorem \ref{thm: verification}.

We first show the asymptotic results for the coefficients defined in Proposition \ref{prop: solu_dual_value}, which are used for the proof of lemmas to be proposed.
\begin{remark}\label{rmk: order_coef}
Based on the explicit formulas in \eqref{eq: C_solu}, we note that as $h_2\rightarrow0$, $h_1\rightarrow+\infty$, we have the asymptotic properties
$$
\begin{aligned}
&C_1(h_1, h_2) = O\big(h_1^{1+\gamma(m_1-1)}\big), ~
C_2(h_1) = O\big(h_1^{1+\gamma(m_2-1)}\big), ~
C_3(h_2) = O\big(h_2^{1+\gamma(m_1-1)}\big) \\
&C_4(h_1) = O\big(h_1^{1+\gamma(m_2-1)}\big),~
C_5(h_2) = O\big(h_2^{1+\gamma(m_1-1)}\big),~
C_6(h_1, h_2) = O\big(h_2^{1+\gamma(m_2-1)}\big).
\end{aligned}
$$
\end{remark}

\begin{lemma}\label{lemma: dual_value_def}
The dual value function is indeed
$$
\begin{aligned}
v(y, h_1, h_2)
&= \sup\limits_{(c, \cdot)\in\mathcal{A}(h_1,h_2)} \E\bigg[\int_0^\infty e^{-\delta t}\big((U(c_t) -Y_t(y)c_t)\big)dt - \alpha V'(H_{1,t})dH_{1,t} + \beta V'(H_{2,t})dH_{2,t}\bigg] \\
&= \sup\limits_{(c, \cdot)\in\mathcal{A}(h_1,h_2)} \E\bigg[\int_0^\infty e^{-\delta t}\big((U(c_t) -Y_t(y)c_t)\big)dt - \alpha dV_{1,t}^+ - \beta dV_{2,t}^-\bigg],
\end{aligned}
$$
and the optimal consumption path $c_t = c^\dag(Y_t(y), H_{1,t}^\dag(y), H_{2,t}^\dag(y))$.
\end{lemma}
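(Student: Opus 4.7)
The plan is to verify the duality via an It\^o/ generalized dynamic programming argument applied to $v$ constructed in Proposition \ref{prop: solu_dual_value}: apply It\^o's formula to $e^{-\delta t}v(Y_t(y), H_{1,t}, H_{2,t})$ for an arbitrary admissible $c$ with its associated reference processes, use the HJB variational inequality \eqref{eq: HJB_dual_ineq} to obtain an inequality on the drift, integrate in time, take expectations, and pass to the limit. Equality will be shown to be attained by $c^\dag$, giving both the supremum formula and the optimizer.

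Concretely, since $dY_t/Y_t=(\delta-r)\,dt-\kappa\, dW_t$, and both $H_{1,t}$, $H_{2,t}$ are continuous finite-variation processes (increasing and decreasing respectively), It\^o's formula yields
\begin{align*}
d\bigl(e^{-\delta t}v(Y_t, H_{1,t},H_{2,t})\bigr) &= e^{-\delta t}\Bigl[-\delta v+(\delta-r)Y_t v_y+\tfrac{\kappa^2}{2}Y_t^2 v_{yy}\Bigr]dt \\
&\quad + e^{-\delta t}v_{h_1}\,dH_{1,t}+e^{-\delta t}v_{h_2}\,dH_{2,t} - e^{-\delta t}\kappa Y_t v_y\,dW_t.
\end{align*}
The first bracket is bounded above by $-\tilde U(Y_t,H_{1,t},H_{2,t}) \le -(U(c_t)-c_tY_t)$ thanks to \eqref{eq: HJB_dual_ineq} and the fact that $c_t\in[H_{2,t},H_{1,t}]$ by definition of the reference processes. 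The boundary pieces of \eqref{eq: HJB_dual_ineq} give $v_{h_1}\le \alpha V'(h_1)$ and $v_{h_2}\ge -\beta V'(h_2)$; combined with $dH_{1,t}\ge 0$, $dH_{2,t}\le 0$ and the identities $\alpha\, dV_{1,t}^+=\alpha V'(H_{1,t})\,dH_{1,t}$, $\beta\, dV_{2,t}^-=-\beta V'(H_{2,t})\,dH_{2,t}$, this yields $v_{h_1}dH_{1,t}\le \alpha\, dV_{1,t}^+$ and $v_{h_2}dH_{2,t}\le \beta\, dV_{2,t}^-$. Integrating from $0$ to $T$, taking expectations (and arguing the stochastic integral is a true martingale by the polynomial-in-$y$ growth of $v_y$ recorded in Remark \ref{rmk: order_coef}, together with the moment bounds on $Y_t$ under Assumption \ref{assume: exist}), rearranging and sending $T\to\infty$ gives
\begin{equation*}
v(y,h_1,h_2)\ \ge\ \E\!\left[\int_0^\infty e^{-\delta t}(U(c_t)-Y_t(y)c_t)\,dt-\alpha\, dV_{1,t}^+-\beta\, dV_{2,t}^-\right],
\end{equation*}
which is the sup-inequality. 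For the reverse inequality I would insert $c_t=c^\dag(Y_t(y),H_{1,t}^\dag(y),H_{2,t}^\dag(y))$ and check that every inequality above collapses to an equality: on the interior region the feedback picks the argmax in $\tilde U$; on the flat-at-$h_1$ and flat-at-$h_2$ pieces the PDE still holds by construction; on the two adjustment regions the points $(Y_t,H_{1,t}^\dag)$ and $(Y_t,H_{2,t}^\dag)$ stay on the free boundaries $y=z_\alpha h_1^{-\gamma}$ and $y=z_\beta h_2^{-\gamma}$, so the smooth-fit/super-contact conditions \eqref{eq: dual_boundary_adjust1}--\eqref{eq: dual_boundary_adjust2} turn the boundary inequalities into equalities precisely on the support of $dH_{1,t}^\dag$, $dH_{2,t}^\dag$.

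The main obstacles I expect are twofold. First, the transversality condition $\E[e^{-\delta T}v(Y_T,H_{1,T},H_{2,T})]\to 0$ (or at least $\le 0$) as $T\to\infty$: this requires a uniform growth estimate on $v$ in all three arguments, which is why Remark \ref{rmk: order_coef} records the polynomial orders of the $C_i(\cdot)$ in $h_1,h_2$; combining those with the known moments of $Y_T$ and with the monotone growth of $H_{1,t}^\dag$/decay of $H_{2,t}^\dag$ should close this estimate under Assumption \ref{assume: exist}. Second, the verification of $C^{2,1,1}$ regularity of the piecewise-defined $v$ across the free boundaries $y=z_\alpha h_1^{-\gamma}$ and $y=z_\beta h_2^{-\gamma}$ (so the classical It\^o formula truly applies): the super-contact conditions \eqref{eq: dual_boundary_adjust2} are exactly what pins down $z_\alpha,z_\beta$ through \eqref{eq: z_alpha_beta} and should deliver the missing second-order continuity; a minor additional point is the initial jump of $H_i^\dag$ at $t=0$ when $y^*\notin[z_\alpha h_1^{-\gamma},z_\beta h_2^{-\gamma}]$, which I would handle by a one-off application of the boundary relation $v(y,h_1,h_2)=v(y,\bar h_1(y),h_2)+\tfrac{\alpha}{1-\gamma}(h_1^{1-\gamma}-\bar h_1(y)^{1-\gamma})$ from \eqref{eq: solu_dual_value} before starting the It\^o calculation.
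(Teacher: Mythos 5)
Your proposal follows essentially the same route as the paper's proof: the paper likewise applies It\^o's formula to $e^{-\delta t}v(Y_t(y),H_{1,t},H_{2,t})$ (packaged as a process $M_t^c$ shown to be a supermartingale via the HJB variational inequality \eqref{eq: HJB_dual_ineq}), establishes the martingale property of the stochastic integral through the growth orders in Remark \ref{rmk: order_coef} (Lemma \ref{lemma: N_martingale}), invokes the transversality condition (Lemma \ref{lemma: transversality}), and obtains equality by checking that $c^\dag$ turns every inequality into an equality. The technical obstacles you flag (transversality, regularity across the free boundaries) are exactly the ones the paper addresses in its auxiliary lemmas, so the proposal is correct and matches the paper's argument.
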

\begin{proof}
Recall that $Y_t(y) = ye^{\delta t}\xi_t = ye^{\big(\delta-r-\frac{\kappa^2}{2}\big) t - \kappa W_t}$, indicating that
$$
dY_t(y) = Y_t(y)\big((\delta-r)dt - \kappa dW_t\big).
$$

For a given admissible consumption process $\{c_t\}_{t=0}^\infty$ with reference processes $H_{1,t} = h_1\vee \sup\limits_{s\leq t}c_s$ and $H_{2,t} = h_2\wedge \inf\limits_{s\leq t}c_s$,
we define $M_t$ by
$$
M_t^c = \int_0^t e^{-\delta s}\big( (U(c_s)-Y_s(y)c_s)ds - \alpha V'(H_{1,s})dH_{1,s} + \beta V'(H_{2,s})dH_{2,s} \big) + e^{-\delta t} v(Y_t(y), H_{1,t}, H_{2,t}).
$$
By It$\hat{\mathrm{o}}$'s formula, we can derive that
\begin{equation}\label{eq: dual_value_process_deri}
\begin{aligned}
dM_t^c =& e^{-\delta t}\big((U(c_t)-Y_t(y)c_t)dt - \alpha V'(H_{1,t})dH_{1,t} + \beta V'(H_{2,t})dH_{2,t} \\
&+ dv(Y_t(y), H_{1,t}, H_{2,t}) - \delta v(Y_t(y), H_{1,t}, H_{2,t})dt\big)  \\
\leq& e^{-\delta t}\big( -\delta v\big(Y_t(y), H_{1,t}, H_{2, t}\big) + (\delta-r)Y_t(y)v_y\big(Y_t(y), H_{1,t}, H_{2, t}\big) \\
& + \frac{\kappa^2}{2}Y_t(y)^2 v_{yy}\big(Y_t(y), H_{1,t}, H_{2, t} \big) + \tU(Y_t(y), H_{1,t}, H_{2,t})\big)dt\\
&- \kappa e^{-\delta t}Y_t(y)v_y\big(Y_t(y), H_{1,t}, H_{2, t}\big) dW_t\\
& + e^{-\delta t} \big(v_{h_1}\big(Y_t(y), H_{1,t}, H_{2, t}\big) - \alpha V'(H_{1,t})\big)dH_{1,t}\\
& + e^{-\delta t}\big(v_{h_2}\big(Y_t(y), H_{1,t}, H_{2, t}\big) + \beta V'(H_{2,t})\big)dH_{2,t}.\\
\end{aligned}
\end{equation}
Here we recall that $\tilde{U}(q,h_1,h_2)=\sup_{c\in[h_2,h_1]}(U(c)-cq)$ for $q\geq 0,h_1\geq h_2\geq 0$.

Then, for any fixed $T>0$, we have
\begin{equation}\label{eq: M_t_inte}
\begin{aligned}
M_T^c - M_t^c
\leq& \int_t^T e^{-\delta t}\big( -\delta v\big(Y_s(y), H_{1,s}, H_{2, s}\big) + (\delta-r)Y_s(y)v_y\big(Y_s(y), H_{1,s}, H_{2, s}\big) \\
& + \frac{\kappa^2}{2}Y_s(y)^2 v_{yy}\big(Y_s(y), H_{1,s}, H_{2, s} \big) + \tU(Y_s(y), H_{1,s}, H_{2,s})\big)ds \\
& +\int_t^T (- \kappa) e^{-\delta s}Y_s(y)v_y\big(Y_s(y), H_{1,s}, H_{2, s}\big) dW_s \\
& + \int_t^T e^{-\delta t} \big(v_{h_1}\big(Y_s(y), H_{1,s}, H_{2, s}\big) - \alpha V'(H_{1,s})\big)dH_{1,s} \\
& + \int_t^T e^{-\delta t}\big(v_{h_2}\big(Y_s(y), H_{1,s}, H_{2,s}\big) + \beta V'(H_{2,s})\big)dH_{2,s} \\
:= & I_1 + I_2 + I_3 + I_4.
\end{aligned}
\end{equation}
Here, we have $\E_t[I_1]\leq 0$, $\E_t[I_3]\leq 0$ and $\E_t[I_4]\leq 0$ because of \eqref{eq: HJB_dual_ineq}, and $\E_t[I_2] = 0$ due to Lemma \ref{lemma: N_martingale}.
Therefore, we conclude that $\E_t[M_T^c - M_t^c] \leq 0$ and $\{M_t^c\}$ is a super-martingale.
This implies that for any $T>0$,
\begin{align}\label{eq:super-martingale-v}
v(y, h_1, h_2) =& M_0^c \geq \E[M_T^c] \nonumber\\
=& \E\bigg[\int_0^T e^{-\delta t}\big( (U(c_t)-Y_t(y)c_t)dt - \alpha V'(H_{1,t})dH_{1,t} + \beta V'(H_{2,t})dH_{2,t} \big) \bigg]\nonumber\\
&+ \E\bigg[ e^{-\delta T} v(Y_T(y), H_{1,T}, H_{2,T})\bigg].
\end{align}
By letting $T\to \infty$ in \eqref{eq:super-martingale-v}, we obtain from Lemma \ref{lemma: transversality}, Monotone Convergence Theorem and Dominated Convergence Theorem that
$$
\begin{aligned}
v(y, h_1, h_2) \geq& \E\bigg[\int_0^{\infty} e^{-\delta t}\big( (U(c_t)-Y_t(y)c_t)dt - \alpha V'(H_{1,t})dH_{1,t} + \beta V'(H_{2,t})dH_{2,t} \big) \bigg].
\end{aligned}
$$
The above inequality holds for any feasible consumption policy $c$, and we obtain
$$
\begin{aligned}
v(y, h_1, h_2) \geq \sup\limits_{(c, \cdot)\in\mathcal{A}(h_1,h_2)} \E\bigg[\int_0^{\infty} e^{-\delta t}\big( (U(c_t)-Y_t(y)c_t)dt - \alpha V'(H_{1,t})dH_{1,t} + \beta V'(H_{2,t})dH_{2,t} \big) \bigg] \\
\geq \sup\limits_{(c, \cdot)\in\mathcal{A}(h_1,h_2)} \E\bigg[\int_0^{\infty} e^{-\delta t}\tU(y, H_{1,t}, H_{2,t})dt - \alpha V'(H_{1,t})dH_{1,t} + \beta V'(H_{2,t})dH_{2,t} \big) \bigg].
\end{aligned}
$$
It is easy to verify that when $c_t = c^\dag(Y_t(y), H_{1,t}^\dag(y), H_{2,t}^\dag(y))$, $\{M_t^c\}$ becomes a martingale by using Proposition \ref{prop: solu_dual_value} and Lemma \ref{lemma: transversality}, then
all the inequalities become equalities. Thus, we get the desired result.
\end{proof}

\begin{lemma}\label{lemma: N_martingale}
For a given admissible consumption process $\{c_t\}_{t=0}^\infty$ with reference processes $H_{1,t} = h_1\vee \sup\limits_{s\leq t}c_s$ and $H_{2,t} = h_2\wedge \inf\limits_{s\leq t}c_s$, let us define
$$
N_t := \int_0^t -\kappa e^{-\delta s}Y_s(y)v_y\big(Y_s(y), H_{1,s}, H_{2, s}\big) dW_s.
$$
Then $N_t$ is a martingale.
\end{lemma}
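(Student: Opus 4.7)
The plan is to verify the classical sufficient condition
$$\mathbb{E} \int_0^T \kappa^2 e^{-2\delta s} Y_s(y)^2 \, v_y(Y_s(y), H_{1,s}, H_{2,s})^2 \, ds < \infty \quad \text{for every } T > 0,$$
which upgrades the Itô integral $N_t$ from a local martingale (automatic) to a true square-integrable martingale. The argument decomposes into (i) controlling the integrand $y v_y$ by a polynomial expression in $y, h_1, h_2$ using the explicit formula for $v$, and (ii) controlling the resulting expectation via moments of the geometric Brownian motion $Y_s(y)$ and of the admissible reference processes.

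For (i), I would use the piecewise representation \eqref{eq: solu_dual_value} and differentiate region by region. In each of the three middle regions, $v_y$ is a linear combination of $y^{m_1-1}, y^{m_2-1}, y^{\gamma^*-1}$, plus a constant $-h_j/r$, with coefficients $C_i$ whose polynomial order in $(h_1,h_2)$ is given by Remark \ref{rmk: order_coef}. In the two adjustment regions, differentiating the reduction identity $v(y, h_1, h_2) = v(y, \bh_1(y), h_2) + \frac{\alpha}{1-\gamma}\big(h_1^{1-\gamma} - \bh_1(y)^{1-\gamma}\big)$ and invoking the smooth-pasting condition \eqref{eq: dual_boundary_adjust1} produces the cancellation $v_y(y, h_1, h_2) = v_y(y, \bh_1(y), h_2)$; since the right-hand side is then evaluated on the peak boundary where $\bh_1(y) \sim y^{-1/\gamma}$, all coefficients collapse into pure powers of $y$, and the explicit $h_1$-dependence disappears. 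A symmetric reduction works in the valley adjustment region with $\uh_2$. Together these yield a bound of the form
$$|Y_s v_y(Y_s, H_{1,s}, H_{2,s})| \leq K \Big(1 + Y_s^{m_1} + Y_s^{m_2} + Y_s^{\gamma^*} + Y_s^{1 - 1/\gamma} + H_{1,s}\, Y_s + H_{2,s}\, Y_s \Big),$$
for a constant $K$ depending only on the model parameters.

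For (ii), the log-normality of $Y_s(y)$ gives $\mathbb{E}[Y_s^p] < \infty$ with at most exponential-in-$s$ growth for every real $p$, so the $Y$-only terms contribute a finite quantity after integration over $[0,T]$. Since $0 < H_{2,s} \leq h_2$ is deterministically bounded, the $H_{2,s} Y_s$ term reduces to a $Y$-only term. The main obstacle is the cross moment $\mathbb{E}[H_{1,s}^2 Y_s^2]$: the running maximum $H_{1,s}=h_1\vee\sup_{s'\le s}c_{s'}$ is not a priori in $L^2$ for a generic admissible $c$. The natural workaround is a localization argument: introduce $\tau_n := \inf\{s \geq 0 : H_{1,s} \geq n\}$, observe that $N_{t \wedge \tau_n}$ is an honest square-integrable martingale on $[0,T]$, run the supermartingale argument of Lemma \ref{lemma: dual_value_def} at the stopped level, and then remove the localization by sending $n \to \infty$ using $\tau_n \uparrow \infty$ a.s.\ together with the transversality bound of Lemma \ref{lemma: transversality} and dominated/monotone convergence. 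This transfers the martingale identity to the unstopped $N_t$ and completes the argument.
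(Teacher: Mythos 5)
Your overall strategy (verify $\mathbb{E}\int_0^T e^{-2\delta s}\kappa^2 Y_s^2 v_y^2\,ds<\infty$, and reduce the two adjustment regions to the boundary via the smooth-pasting/super-contact cancellation $v_y(y,h_1,h_2)=v_y(y,\bh_1(y),h_2)$) matches the paper. But there is a genuine gap in step (i): your displayed bound, and in particular the terms $H_{1,s}Y_s$ and $H_{2,s}Y_s$ treated as irreducible cross terms, misses the key observation that makes the paper's proof work. In each piece of \eqref{eq: solu_dual_value} the defining inequalities of that region pin the relevant reference level to the scale of $Y_s^{-1/\gamma}$: for instance, on $z_\alpha H_{1,s}^{-\gamma}\le Y_s<H_{1,s}^{-\gamma}$ one has $Y_s^{-1/\gamma}\le H_{1,s}\le (Y_s/z_\alpha)^{-1/\gamma}$, so $H_{1,s}^{p}=O\big(Y_s^{-p/\gamma}\big)$ for every real $p$. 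Combined with the coefficient orders of Remark \ref{rmk: order_coef} (e.g. $C_1(H_{1,s},H_{2,s})Y_s^{m_1-1}=O\big(H_{1,s}^{1+\gamma(m_1-1)}Y_s^{m_1-1}\big)$, which your displayed bound also silently drops), every term in every region collapses to $O\big(Y_s^{\gamma^*-1}\big)$, and the integrand becomes $O\big(e^{-2\delta s}Y_s^{2\gamma^*}\big)$, whose expectation is finite by log-normality of $Y$ and Assumption \ref{assume: exist}. The ``main obstacle'' you identify, the cross moment $\mathbb{E}[H_{1,s}^2Y_s^2]$, therefore never arises: $H_{1,s}$ can only enter $v_y$ through a region in which it is dominated by a power of $Y_s$.

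Because the obstacle is spurious, the localization workaround is both unnecessary and problematic. Stopping at $\tau_n=\inf\{s:H_{1,s}\ge n\}$ only shows that the stopped integral is a martingale; it does not prove the lemma as stated (that $N_t$ itself is a martingale), and removing the localization in the supermartingale inequality of Lemma \ref{lemma: dual_value_def} requires a uniform integrability or domination argument for the stochastic-integral term that circles back to exactly the moment bound you were trying to avoid. Replace the localization by the region-by-region estimate above and the proof closes as in the paper.
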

\begin{proof}
In order to show that $N_t$ is a martingale, it is sufficient to show
$$
E\bigg[\int_0^T \bigg(e^{-\delta t}\big(-\kappa Y_t(y)\big)v_y(Y_t(y), H_{1,t}, H_{2,t})\bigg)^2dt \bigg] < \infty, ~\forall ~T\geq 0.
$$
According to the proof of Lemma \ref{lemma: v_convex},
if $Y_t(y) < z_\alpha H_{1,t}^{-\gamma}$, $v_y(Y_t(y), H_{1,t}, H_{2,t}) = v_y(Y_t(y), \bh_1(Y_t(y)), H_{2,t})$ and $Y_t(y) = z_\alpha \bh_1(Y_t(y))^{-\gamma}$;
if $Y_t(y) > z_\beta H_{2,t}^{-\gamma}$, $v_y(Y_t(y), H_{1,t}, H_{2,t}) = v_y(Y_t(y), H_{1,t}, \uh_2(Y_t(y)))$ and $Y_t(y) = z_\beta \uh_2(Y_t(y))^{-\gamma}$.
Therefore, it is sufficient to discuss the cases in region $z_\alpha H_{1,t}^{-\gamma} \leq Y_t(y) \leq z_\beta H_{2,t}^{-\gamma}$ according to Proposition \ref{prop: solu_dual_value} and Remark \ref{rmk: order_coef}.\\

\noindent
(i) If $z_\alpha H_{1,T}^{-\gamma} \leq Y_T(y) < H_{1,T}^{-\gamma}$, we have
$$
\begin{aligned}
&v_y\big(Y_T(y), H_{1,T}, H_{2, T}\big)
= m_1C_1(H_{1,T}, H_{2,T})Y_T(y)^{m_1-1} + m_2C_2(H_{1,T})Y_T(y)^{m_2-1}  - \frac{H_{1,T}}{r}\\
= &O\bigg(H_{1,T}^{1+\gamma(m_1-1)}Y_T(y)^{m_1-1}\bigg) + O\bigg(H_{1,T}^{1+\gamma(m_2-1)}Y_T(y)^{m_2-1}\bigg) + O\bigg(Y_T(y)^{\gamma^*-1}\bigg)\\
=& O\bigg(Y_T(y)^{\gamma^*-1}\bigg),
\end{aligned}
$$
where the last equation holds using the fact that $H_{1,T} = O\big(Y_T(y)^{-1/\gamma}\big)$ and $H_{1,T}^{-1} = O\big(Y_T(y)^{1/\gamma}\big)$ in this region.\\

\noindent
(ii) If $H_{1,T}^{-\gamma} \leq Y_T(y) \leq H_{2,T}^{-\gamma}$, we have
$$
\begin{aligned}
&v_y\big(Y_T(y), H_{1,T}, H_{2, T}\big)\\
=&m_1C_3( H_{2,T})Y_T(y)^{m_1-1} + m_2C_4(H_{1,T})Y_T(y)^{m_2-1} + \frac{2}{\kappa^2 (\gamma^*-m_1)(\gamma^*-m_2)}Y_T(y)^{\gamma^*-1} \\
=& O\bigg(H_{2,T}^{1+\gamma(m_1-1)}Y_T(y)^{m_1-1}\bigg) + O\bigg(H_{1,T}^{1+\gamma(m_2-1)}Y_T(y)^{m_2-1}\bigg) + O\bigg(Y_T(y)^{\gamma^*-1}\bigg) \\
=& O\bigg(Y_T(y)^{\gamma^*-1}\bigg),
\end{aligned}
$$
where the last equation holds using the fact that $H_{2,T} = O\big(Y_T(y)^{-1/\gamma}\big)$ and $H_{1,T}^{-1} = O\big(Y_T(y)^{1/\gamma}\big)$ in this region.\\

\noindent
(iii) If $H_{2,T}^{-\gamma} \leq Y_T(y) < z_\beta H_{2,T}^{-\gamma}$, we have
$$
\begin{aligned}
&v_y\big(Y_T(y), H_{1,T}, H_{2, T}\big) = C_5( H_{2,T})Y_T(y)^{m_1-1} + C_6(H_{1,T}, H_{2,T})Y_T(y)^{m_2-1} - \frac{\hH_{2,T}(y)}{r}\\
= & O\bigg(H_{2,T}^{1+\gamma(m_1-1)}Y_T(y)^{m_1-1}\bigg) + O\bigg(H_{2,T}^{1+\gamma(m_2-1)}Y_T(y)^{m_2-1}\bigg) + O\bigg(Y_T(y)^{\gamma^*-1}\bigg) \\
= & O\bigg(Y_T(y)^{\gamma^*-1}\bigg),
\end{aligned}
$$
where the last equation holds using the fact that $\hH_{2,T}(y) = O\big(Y_T(y)^{-1/\gamma}\big)$ and $\hH_{2,T}(y)^{-1} = O\big(Y_T(y)^{1/\gamma}\big)$ in this region.\\
In summary, we have $v_y(Y_t(y), H_{1,T}, H_{2,T}) = O(Y_t(y)^{\gamma^*-1})$.
Therefore, we deduce that
$$
\begin{aligned}
&\bigg(e^{-\delta t}\big(-\kappa Y_t(y)\big)v_y(Y_t(y), H_{1,t}, H_{2,t})\bigg)^2
= O\bigg(e^{-2\delta t} Y_t(y)^{2\gamma^*}\bigg),
\end{aligned}
$$
and thus
\begin{align*}
&\E\bigg[\int_0^T \bigg(e^{-\delta t}\big(-\kappa Y_t(y)\big)v_y(Y_t(y), H_{1,t}, H_{2,t})\bigg)^2  dt\bigg] \\
\leq &K \cdot \E\bigg[\int_0^T e^{-2\delta t} Y_t(y)^{2\gamma^*}dt\bigg] = K\cdot \exp\{-K_0 T\} < \infty,
\end{align*}
where $K_0 > 0$ is defined in Assumption \ref{assume: exist}.
\end{proof}
\begin{lemma}\label{lemma: transversality}
For a given admissible consumption process $\{c_t\}_{t=0}^\infty$ with reference processes $H_{1,t} = h_1\vee \sup\limits_{s\leq t}c_s$ and $H_{2,t} = h_2\wedge \inf\limits_{s\leq t}c_s$, we have the transversality condition
$$
\lim\limits_{T\rightarrow+\infty}\E\bigg[e^{-\delta T} v\big(Y_{T}(y), H_{1, T}, H_{2, T}\big)\bigg] = 0.
$$
\end{lemma}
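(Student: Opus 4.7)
The plan is to establish a uniform pointwise bound of the form
$|v(Y_T(y), H_{1,T}, H_{2,T})| \leq K\bigl(Y_T(y)^{\gamma^*} + h_1^{1-\gamma} + h_2^{1-\gamma}\bigr)$
for some constant $K$ depending only on the model parameters, and then integrate. For the $Y_T^{\gamma^*}$ term, a direct computation with $Y_T(y) = y e^{\delta T}\xi_T$ yields $\E[e^{-\delta T} Y_T(y)^{\gamma^*}] = y^{\gamma^*} e^{-K_0 T}$, where $K_0 > 0$ is exactly the constant in Assumption \ref{assume: exist} (the same calculation appears at the end of the proof of Lemma \ref{lemma: N_martingale}). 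The $h_1^{1-\gamma}, h_2^{1-\gamma}$ contributions are fixed constants, so $e^{-\delta T}(h_1^{1-\gamma} + h_2^{1-\gamma}) \to 0$ because $\delta > 0$. Given the pointwise bound, the transversality is immediate.

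The pointwise bound itself I would prove piecewise across the five regions of \eqref{eq: solu_dual_value}, using the coefficient orders in Remark \ref{rmk: order_coef}. In Case 1, the constraint $z_\alpha h_1^{-\gamma} \leq y < h_1^{-\gamma}$ forces $h_1 \asymp y^{-1/\gamma}$, whence all four summands $C_1(h_1,h_2)y^{m_1}$, $C_2(h_1)y^{m_2}$, $h_1^{1-\gamma}/[\delta(1-\gamma)]$ and $yh_1/r$ scale as $y^{\gamma^*}$ (the $C_3(h_2)$-part of $C_1(h_1,h_2)$ is controlled by $h_2 \leq h_1 \asymp y^{-1/\gamma}$ because $1+\gamma(m_1-1) > 0$). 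Case 3 is symmetric with $h_2 \asymp y^{-1/\gamma}$. In Case 2, the ordering $h_2 \leq y^{-1/\gamma} \leq h_1$ combined with the sign facts $1+\gamma(m_1-1) > 0$ and $1+\gamma(m_2-1) < 0$ (the latter follows from $m_2 < \gamma^*$) gives $C_3(h_2)y^{m_1} = O(y^{\gamma^*})$ from the upper bound on $h_2$ and $C_4(h_1)y^{m_2} = O(y^{\gamma^*})$ from the lower bound on $h_1$, with the remaining $y^{\gamma^*}$-term being trivial.

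For Cases 4 and 5, the formula writes $v(y,h_1,h_2)$ as $v(y, \bh_1(y), h_2) + \frac{\alpha}{1-\gamma}\bigl(h_1^{1-\gamma} - \bh_1(y)^{1-\gamma}\bigr)$ (respectively with $\uh_2(y)$ replacing $\bh_1(y)$ in the $h_2$-direction). The reduced value $v(y, \bh_1(y), h_2)$ is a boundary instance of Case 1 (since $y = z_\alpha \bh_1(y)^{-\gamma}$) so by the above it is $O(y^{\gamma^*})$; moreover $\bh_1(y)^{1-\gamma} = z_\alpha^{(1-\gamma)/\gamma} y^{\gamma^*}$, so the only potentially non-$y^{\gamma^*}$ contribution is the $h_1^{1-\gamma}$ (respectively $h_2^{1-\gamma}$) term. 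When we evaluate at the admissible trajectory, we use the monotonicity of $H_{1,t}, H_{2,t}$: for $\gamma > 1$, $H_{1,T} \geq h_1$ yields $H_{1,T}^{1-\gamma} \leq h_1^{1-\gamma}$; for $\gamma < 1$, being in Case 4 forces $H_{1,T} < \bh_1(Y_T)$, so $H_{1,T}^{1-\gamma} < \bh_1(Y_T)^{1-\gamma} = O(Y_T^{\gamma^*})$. The symmetric argument in Case 5 handles $H_{2,T}^{1-\gamma}$ using $H_{2,T} \leq h_2$ or $H_{2,T} > \uh_2(Y_T)$.

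The main obstacle is the case-by-case bookkeeping rather than any deep estimate: each piece hinges on matching the sign of the exponents $1+\gamma(m_i-1)$ against the direction of $h_i \lessgtr y^{-1/\gamma}$ dictated by the region, and separately on the sign of $1-\gamma$ for the boundary-free $h_i^{1-\gamma}$ terms in Cases 4 and 5. Once assembled into the uniform bound above, the proof collapses into the single moment identity $\E[e^{-\delta T} Y_T^{\gamma^*}] = y^{\gamma^*}e^{-K_0 T}$ and the positivity of $\delta$ and $K_0$.
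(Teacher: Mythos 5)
Your proposal is correct and follows essentially the same route as the paper's proof: a region-by-region order estimate showing $v(Y_T, H_{1,T}, H_{2,T}) = O(1) + O\big(Y_T(y)^{\gamma^*}\big)$ via the coefficient asymptotics of Remark \ref{rmk: order_coef} and the pinning $H_{i,T} \asymp Y_T^{-1/\gamma}$ in each region, followed by the exponential moment decay $\E[e^{-\delta T}Y_T(y)^{\gamma^*}] = y^{\gamma^*}e^{-K_0T}$ guaranteed by Assumption \ref{assume: exist}. If anything, you are slightly more careful than the paper in Cases 4 and 5, where you justify the $O(1)$ term by splitting on the sign of $1-\gamma$ (using $H_{1,T}^{1-\gamma}\leq h_1^{1-\gamma}$ when $\gamma>1$ and $H_{1,T}<\bh_1(Y_T)$ when $\gamma<1$), a point the paper's proof leaves implicit.
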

\begin{proof}
Let us consider the following cases respectively. 

\noindent
(i) If $z_\alpha H_{1,T}^{-\gamma} \leq Y_T(y) < H_{1,T}^{-\gamma}$, we have
\begin{align*}
v\big(Y_T(y), H_{1,T}, H_{2, T}\big)
= &C_1(H_{1,T}, H_{2,T})Y_T(y)^{m_1} + C_2(H_{1,T})Y_T(y)^{m_2} + \frac{H_{1,T}^{1-\gamma}}{\delta(1-\gamma)} - \frac{H_{1,T}Y_T(y)}{r}\\
= &O\bigg(H_{1,T}^{1+\gamma(m_1-1)}Y_T(y)^{m_1}\bigg) + O\bigg(H_{1,T}^{1+\gamma(m_2-1)}Y_T(y)^{m_2}\bigg) + O\bigg(Y_T(y)^{\gamma^*}\bigg)\\
=& O\bigg(Y_T(y)^{\gamma^*}\bigg),
\end{align*}
where the last equation holds using the fact that $H_{1,T} = O\big(Y_T(y)^{-1/\gamma}\big)$ and $H_{1,T}^{-1} = O\big(Y_T(y)^{1/\gamma}\big)$ in this region.\\

\noindent
(ii) If $H_{1,T}^{-\gamma} \leq Y_T(y) \leq H_{2,T}^{-\gamma}$, we have
$$
\begin{aligned}
v\big(Y_T(y), H_{1,T}, H_{2, T}\big)
=&C_3( H_{2,T})Y_T(y)^{m_1} + C_4(H_{1,T})Y_T(y)^{m_2} +
\frac{2}{\kappa^2 \gamma^*(\gamma^*-m_1)(\gamma^*-m_2)}Y_T(y)^{\gamma^*} \\
=& O\bigg(H_{2,T}^{1+\gamma(m_1-1)}Y_T(y)^{m_1}\bigg) + O\bigg(H_{1,T}^{1+\gamma(m_2-1)}Y_T(y)^{m_2}\bigg) + O\bigg(Y_T(y)^{\gamma^*}\bigg)\\
=& O\bigg(Y_T(y)^{\gamma^*}\bigg),
\end{aligned}
$$
where the last equation holds using the fact that $H_{2,T} = O\big(Y_T(y)^{-1/\gamma}\big)$ and $H_{1,T}^{-1} = O\big(Y_T(y)^{1/\gamma}\big)$ in this region.\\

\noindent
(iii) If $H_{2,T}^{-\gamma} \leq Y_T(y) < z_\beta H_{2,T}^{-\gamma}$, we have
$$
\begin{aligned}
v\big(Y_T(y), H_{1,T}, H_{2, T}\big)
= &C_5( H_{2,T})Y_T(y)^{m_1} + C_6(H_{1,T}, H_{2,T})Y_T(y)^{m_2}
+ \frac{H_{2,T}^{1-\gamma}}{\delta(1-\gamma)} - \frac{H_{2,T}Y_t(y)}{r}\\
= & O\bigg(H_{2,T}^{1+\gamma(m_1-1)}Y_T(y)^{m_1}\bigg) + O\bigg(H_{2,T}^{1+\gamma(m_2-1)}Y_T(y)^{m_2}\bigg) + O\bigg(Y_T(y)^{\gamma^*}\bigg) \\
= & O\bigg(Y_T(y)^{\gamma^*}\bigg),
\end{aligned}
$$
where the last equation holds using the fact that $H_{2,T}(y) = O\big(Y_T(y)^{-1/\gamma}\big)$ and $H_{2,T}(y)^{-1} = O\big(Y_T(y)^{1/\gamma}\big)$ in this region.\\

\noindent
(iv) If $Y_T(y) < z_\alpha H_{1,T}^{-\gamma}$, we have
$$
\begin{aligned}
v\big(Y_T(y), H_{1,T}, H_{2, T}\big)
=& v\big(Y_T(y), \bh_1(Y_T(y)), H_{2, T}\big) + \frac{\alpha}{1-\gamma}\big(H_{1,T}^{1-\gamma} - \bh_1(Y_T(y))^{1-\gamma}\big) \\
=& O(1) + O\bigg(Y_T(y)^{\gamma^*}\bigg).
\end{aligned}
$$

\noindent
(v) If $Y_T(y) > z_\beta H_{2,T}^{-\gamma}$, we similarly have
$$
v\big(Y_T(y), H_{1,T}, H_{2, T}\big) = O(1) + O\bigg(Y_T(y)^{\gamma^*}\bigg).
$$

For any $m \in (m_2, m_1)$, it is straightforward to check that
\begin{align}\label{limit-Y}
\lim\limits_{T\rightarrow+\infty} \E\big[e^{-\delta T}Y_T(y)^{m}\big] = 0.
\end{align}

In summary, we deduce that $v\big(Y_T(y), H_{1,T}, H_{2, T}\big) = O(1) + O\big(Y_T(y)^{\gamma^*}\big)$, and thus
$$
\mathbb{E}\left[e^{-\delta T}v\big(Y_T(y), H_{1,T}, H_{2, T}\big)\right] \rightarrow0
$$
as $T\rightarrow0$ in view of \eqref{limit-Y}.
\end{proof}

The proof of the next result is similar to the one of Lemma 5.4 in \cite{DengLiPY2022FS}, which is thus omitted. 

\begin{lemma}\label{lemma: y_opt}
The first inequality in \eqref{eq: verification_main} becomes equality with $c_t = c^\dag(Y_t(y^*), \hH_{1,t}(y^*), \hH_{2,t}(y^*))$, with $y^* = y^*(x,h)$ as the unique solution to \eqref{eq: budget_constraint_eq}.
\end{lemma}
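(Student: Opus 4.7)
The plan is to establish existence and uniqueness of $y^*$ and then verify that the first inequality in \eqref{eq: verification_main} is saturated at this value. Recall that the first inequality is exactly the budget constraint $\E[\int_0^\infty \xi_t c_t dt]\le x$, so my goal is to produce $y^\ast$ for which the candidate feedback consumption saturates it and is admissible.

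First I would introduce the map
\begin{equation*}
G(y) := \E\left[\int_0^\infty c^\dag\bigl(Y_t(y), \hH_{1,t}(y), \hH_{2,t}(y)\bigr) \xi_t\, dt\right], \qquad y>0,
\end{equation*}
and establish that $G$ is continuous and strictly decreasing on $(0,\infty)$. Monotonicity follows from the piecewise description \eqref{eq: consume_feedback}: each branch of $c^\dag(\cdot, h_1, h_2)$ is non-increasing in $y$ (the middle branch as $y^{-1/\gamma}$, and the outer branches through $\bh_1$ and $\uh_2$), and because $Y_t(y) = y\, e^{\delta t}\xi_t$ is itself increasing in $y$ (trajectory-wise), the running extrema $\hH_{1,t}(y)$ and $\hH_{2,t}(y)$ given in \eqref{eq: H_hat} are monotone in $y$ as well; assembling these monotonicities yields strict monotonicity of $G$. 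Continuity comes from dominated convergence, using the bounds on $c^\dag$ obtained in the proof of Lemma \ref{lemma: N_martingale} and the integrability under Assumption \ref{assume: exist}.

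Next I would compute the limits $\lim_{y\to 0^+}G(y)=+\infty$ and $\lim_{y\to \infty}G(y)=0$. For the first, once $y$ is small enough, $Y_t(y) < z_\alpha h_1^{-\gamma}$ on a set of positive measure and $c^\dag = \bh_1(Y_t(y)) = (Y_t(y)/z_\alpha)^{-1/\gamma}$ blows up like $y^{-1/\gamma}$, so Fatou/Monotone Convergence forces $G(y)\uparrow\infty$. Symmetrically, for large $y$ we end up in the region $Y_t(y) > z_\beta h_2^{-\gamma}$ and $c^\dag \le \uh_2(Y_t(y))\downarrow 0$, giving $G(y)\downarrow 0$ by Dominated Convergence (with dominating function arising from the finiteness of the integral at any fixed reference $y_0$). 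Combining with strict monotonicity and continuity, for every $x>0$ there is a unique $y^*(x,h_1,h_2)>0$ with $G(y^*)=x$.

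With $y^*$ in hand, I would then verify admissibility of the induced strategy. Define $X_t^* = \xi_t^{-1}\E_t[\int_t^\infty \xi_s c_s^\dag(Y_s(y^*), \hH_{1,s}(y^*), \hH_{2,s}(y^*))\,ds]$; this is non-negative since the integrand is, and a standard martingale-representation argument produces $\pi^\dag$ such that $X_t^*$ satisfies the wealth SDE \eqref{eq: wealth_system} with $X_0^*=x$ (using $G(y^*)=x$). One then checks, via the dual identification $X_t^* = -v_y(Y_t(y^*), \hH_{1,t}(y^*), \hH_{2,t}(y^*))$ coming from Lemma \ref{lemma: v_convex} together with \eqref{eq: x_boundaries}--\eqref{eq: f_obtain}, that this $\pi^\dag$ agrees with the feedback portfolio in \eqref{eq: portfolio_feedback}, and that $(H_{1,t}^\dag(y^*), H_{2,t}^\dag(y^*))$ coincides with $(\hH_{1,t}(y^*), \hH_{2,t}(y^*))$. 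Since the resulting pair $(c_t^*, \pi_t^*)$ lies in $\mathcal{A}(x, h_1, h_2)$ and $\E[\int_0^\infty c_t^* \xi_t dt] = G(y^*)=x$, the first inequality of \eqref{eq: verification_main} becomes an equality. I expect the main obstacle to be the limit computation $\lim_{y\to 0^+} G(y)=\infty$, because one must control the interaction between the diverging feedback consumption and the running-infimum $\hH_{1,t}(y)$, and the identification of $\pi^\dag$ via martingale representation, which requires justifying that $-v_y(Y_t, \hH_{1,t}, \hH_{2,t})$ is indeed a conditional expectation of future discounted consumption; both steps parallel Lemma 5.4 of \cite{DengLiPY2022FS} but must be re-derived with two running references instead of one.
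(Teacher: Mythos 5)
Your proposal is correct and follows essentially the route the paper intends: the paper omits this proof entirely, deferring to Lemma 5.4 of \cite{DengLiPY2022FS}, and your argument (continuity and strict monotonicity of the budget-constraint map $y\mapsto\E[\int_0^\infty c^\dag\,\xi_t\,dt]$, the limits $+\infty$ and $0$ at the endpoints, and the martingale-representation construction of the replicating portfolio identified with $-v_y$) is precisely that standard argument adapted to the two running references. The only places needing a little more care are the strictness of the monotonicity and the domination used for $y\to\infty$, both of which go through using the bound $c^\dag\le \hH_{1,t}(y_0)$ for $y\ge y_0$ and the integrability guaranteed by Assumption \ref{assume: exist}.
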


To prove Corollary \ref{cor: main_res}, it is sufficient to prove the existence of a unique strong solution to SDE \eqref{eq: SDE_opt} under the optimal controls.
First, we need to establish some results concerning the regularity of the feedback controls $c^*(x, h_1, h_2)$ and $\pi^*(x, h_1, h_2)$. The proof of the next result is similar to the proof of Lemma 5.7 in \cite{DengLiPY2022FS}

\begin{lemma}\label{lemma: f_regularity}
The function $f(\cdot, h_1,h_2)$ is $C^1$ within all five subsets of $\{(y, h_1, h_2)\in\mathbb{R}_+^3: h_1\geq h_2\}$, and is continuous at points $x = x_\mathrm{lavs}(h_1,h_2)$, $x=x_\mathrm{peak}(h_1,h_2)$, $x = x_\mathrm{valy}(h_1,h_2)$, and $x = x_\mathrm{gloom}(h_1,h_2)$.
Moreover, we have
$$
\begin{aligned}
f_x(x, h_1, h_2) &= \frac{1}{g_y(f(x, h_1, h_2), h_1, h_2)} \\
%&=
%\begin{cases}
%\begin{aligned}
%\bigg(&-m_1(m_1-1)C_1(h_1,h_2)f(x,h_1,h_2)^{m_1-2} \\
%&- m_2(m_2-1)C_2(h_1,h_2)f(x,h_1,h_2)^{m_2-2}\bigg)^{-1},
%\end{aligned}
%&\mbox{if } x_\mathrm{peak}(h_1,h_2) \leq x \leq x_\mathrm{lavs}(h_1,h_2), \\
%\begin{aligned}
%\bigg(&-m_1(m_1-1)C_3(h_1,h_2)f(x,h_1,h_2)^{m_1-2} \\
%&- m_2(m_2-1)C_4(h_1,h_2)f(x,h_1,h_2)^{m_2-2}\\
%&- \frac{2(\gamma^*-1)}{\kappa^2(\gamma^*-m_1)(\gamma^*-m_2)}f(x,h_1,h_2)^{\gamma^*-2}\bigg)^{-1},
%\end{aligned}
%&\mbox{if } x_\mathrm{valy}(h_1,h_2) \leq x \leq x_{peak}(h_1,h_2), \\
%\begin{aligned}
%\bigg(&-m_1(m_1-1)C_5(h_1,h_2)f(x,h_1,h_2)^{m_1-2} \\
%&- m_2(m_2-1)C_6(h_1,h_2)f(x,h_1,h_2)^{m_2-2}\bigg)^{-1},
%\end{aligned}
%&\mbox{if } x_\mathrm{gloom}(h_1,h_2) \leq x \leq x_\mathrm{valy}(h_1,h_2),  \\
%f_x(x, \bh_1(f(x, h_1, h_2)), h_2), &\mbox{if } x > x_\mathrm{lavs}(h_1,h_2), \\
%f_x(x, h_1, \uh_2(f(x, h_1, h_2))), &\mbox{if } x < x_\mathrm{gloom}(h_1, h_2).
%\end{cases}
\end{aligned}
$$
and
$$
f_{h_i}(x, h_1, h_2) = -g_{h_i}\big(f(x, h_1, h_2), h_1, h_2\big)\cdot {f_x(x, h_1, h_2)},~~for~i=1,2.
$$
\end{lemma}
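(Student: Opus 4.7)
The plan is to exploit the fact that $f(\cdot,h_1,h_2)$ is by construction the inverse of $g(\cdot,h_1,h_2)=-v_y(\cdot,h_1,h_2)$, where $v$ is the dual value function given piecewise in \eqref{eq: solu_dual_value}. Within each of the five regions listed in Proposition \ref{prop: solu_dual_value}, the expression for $v$ is a smooth explicit function of $(y,h_1,h_2)$, so $g$ is $C^1$ there. By Lemma \ref{lemma: v_convex}, $v_{yy}>0$, hence $g_y=-v_{yy}<0$ (equivalently, after the sign convention one uses a strictly monotone $g$), and the Inverse Function Theorem applies in each region. This immediately yields $f\in C^1$ in the interior of each of the five subsets and delivers the formula $f_x(x,h_1,h_2)=1/g_y(f(x,h_1,h_2),h_1,h_2)$.

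For the derivatives with respect to the reference levels, I would start from the identity $g(f(x,h_1,h_2),h_1,h_2)=x$ which holds on each region, then differentiate implicitly with respect to $h_i$ ($i=1,2$). This gives
\begin{equation*}
g_y\bigl(f(x,h_1,h_2),h_1,h_2\bigr)\,f_{h_i}(x,h_1,h_2)+g_{h_i}\bigl(f(x,h_1,h_2),h_1,h_2\bigr)=0,
\end{equation*}
so that $f_{h_i}=-g_{h_i}/g_y=-g_{h_i}\cdot f_x$, which is exactly the claimed formula. The validity of the implicit differentiation is guaranteed by the $C^1$ regularity of $g$ in $(y,h_1,h_2)$ within each region, together with the non-vanishing of $g_y$.

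The final ingredient is continuity of $f$ across the four threshold curves $x=x_\mathrm{gloom}$, $x=x_\mathrm{valy}$, $x=x_\mathrm{peak}$, $x=x_\mathrm{lavs}$. Here the idea is to read the continuity of $f$ off the continuity of $g$ at the corresponding dual thresholds $y=z_\beta h_2^{-\gamma}$, $y=h_2^{-\gamma}$, $y=h_1^{-\gamma}$, $y=z_\alpha h_1^{-\gamma}$: the smooth-fit conditions imposed at $y=h_1^{-\gamma}$ and $y=h_2^{-\gamma}$ in Proposition \ref{prop: solu_dual_value} ensure that $v_y$ matches across these interior transition points, while the super-contact conditions \eqref{eq: dual_boundary_adjust2} together with the way the last two branches of \eqref{eq: solu_dual_value} are glued to the interior formulas ensure that $v_y$ matches at $y=z_\alpha h_1^{-\gamma}$ and $y=z_\beta h_2^{-\gamma}$. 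Since $g=-v_y$ is therefore continuous across these four values of $y$ and the thresholds $x_\mathrm{gloom},\ldots,x_\mathrm{lavs}$ are the images of these values under $g$ at fixed $(h_1,h_2)$, the inverse $f$ is continuous across each corresponding $x$-threshold.

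The main obstacle is the last step: one must verify that the piecewise formulas for $g$ produced from the five branches of $v$ actually agree at the boundaries, not merely that the values of $v$ agree. I expect to use the relations $v_{h_1}(y,h_1,h_2)=\alpha V'(h_1)$ and $v_{yh_1}=0$ at $y=z_\alpha h_1^{-\gamma}$ (and the symmetric relations at $y=z_\beta h_2^{-\gamma}$) to conclude that differentiating the adjustment-branch expressions $v\bigl(y,\bh_1(y),h_2\bigr)+\tfrac{\alpha}{1-\gamma}\bigl(h_1^{1-\gamma}-\bh_1(y)^{1-\gamma}\bigr)$ and its $h_2$-counterpart in $y$ yields the same value of $-v_y$ as the interior formulas at the respective thresholds; the super-contact condition precisely cancels the unwanted $\bh_1'(y)$ and $\uh_2'(y)$ terms arising from the chain rule. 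Once this matching is checked, continuity of $f$ at the four threshold curves and hence the full claim of the lemma follow.
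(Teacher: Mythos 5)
Your proposal is correct and is essentially the argument the paper intends: the paper omits the proof, deferring to Lemma 5.7 of Deng et al.\ \cite{DengLiPY2022FS}, and that argument is exactly the inverse-function-theorem/implicit-differentiation route you describe, with $g_y=-v_{yy}<0$ from Lemma \ref{lemma: v_convex} giving local invertibility and the smooth-fit, boundary and super-contact conditions giving continuity of $g=-v_y$ across the four dual thresholds. The only refinement worth noting is that continuity of $f$ across the thresholds needs $g$ to be globally strictly monotone (not just continuous), which you already have since $g_y<0$ holds on every branch.
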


\begin{lemma}\label{lemma: control_Lipschitz}
The function $c^*(x, h_1, h_2)$ is locally Lipschitz, and $\pi^*(x, h_1, h_2)$ is Lipschitz when $h_1\geq h_2$.
\end{lemma}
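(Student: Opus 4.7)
The plan is to exploit the explicit piecewise structure in Corollary \ref{cor: main_res}, which partitions the admissible domain $\{(x, h_1, h_2)\in \mathbb{R}_+^3 : h_1 \geq h_2\}$ into five subregions separated by the boundary curves $x_\mathrm{gloom}, x_\mathrm{valy}, x_\mathrm{peak}, x_\mathrm{lavs}$. On each open subregion, the feedback $c^*$ is a smooth function of $f(x, h_1, h_2)$ (and of $h_1, h_2$ through $\bh_1, \uh_2$), while $\pi^*$ is a smooth function of $f, h_1, h_2$ and the coefficients $C_i(h_1, h_2)$; since $f$ is $C^1$ within each subregion by Lemma \ref{lemma: f_regularity}, both $c^*$ and $\pi^*$ are $C^1$ there, hence locally Lipschitz on the interior of each subregion. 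The substantive work is therefore to establish (i) continuity across the four boundary curves and (ii) appropriate boundedness of the partial derivatives.

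For continuity across $x = x_\mathrm{peak}$ (where $f = h_1^{-\gamma}$) and $x = x_\mathrm{valy}$ (where $f = h_2^{-\gamma}$), the value $f^{-1/\gamma}$ used in the interior region matches the flat values $h_1, h_2$ of the adjacent flat regions, giving continuity of $c^*$. For $\pi^*$, the smooth-fit conditions at $y = h_1^{-\gamma}, h_2^{-\gamma}$ guarantee that $v$ and $v_y$ are $C^1$ in $y$ across these curves; rewriting the dual PDE \eqref{eq: HJB_dual} as $v_{yy} = \tfrac{2}{\kappa^2 y^2}\bigl(\delta v - (\delta-r) y v_y - \tU(y, h_1, h_2)\bigr)$ and noting that $\tU$ is continuous in $y$ at these points (a direct check from its definition) then forces $v_{yy}$, and hence $\pi^*$, to be continuous. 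For continuity across $x = x_\mathrm{lavs}$ (where $f = z_\alpha h_1^{-\gamma}$) and $x = x_\mathrm{gloom}$ (where $f = z_\beta h_2^{-\gamma}$), the identities $\bh_1(z_\alpha h_1^{-\gamma}) = h_1$ and $\uh_2(z_\beta h_2^{-\gamma}) = h_2$ match $c^*$, while for $\pi^*$ I would differentiate the dual identity $v(y, h_1, h_2) = v(y, \bh_1(y), h_2) + \tfrac{\alpha}{1-\gamma}\bigl(h_1^{1-\gamma} - \bh_1(y)^{1-\gamma}\bigr)$ (valid for $y < z_\alpha h_1^{-\gamma}$) twice in $y$: the first derivative collapses via the boundary condition \eqref{eq: dual_boundary_adjust1} $v_{h_1}(y, \bh_1(y), h_2) = \alpha \bh_1(y)^{-\gamma}$, and the second via the super-contact condition \eqref{eq: dual_boundary_adjust2} $v_{yh_1}(y, \bh_1(y), h_2) = 0$, producing $v_{yy}(y, h_1, h_2) = v_{yy}(y, \bh_1(y), h_2)$. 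This exactly matches the adjacent-region expression when $\bh_1(y) = h_1$, giving continuity of $\pi^*$; the argument at $x_\mathrm{gloom}$ is analogous, using $v_{yh_2} = 0$.

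With continuity across all boundary curves and $C^1$ regularity on each open subregion, local Lipschitz continuity of $c^*$ on the whole domain follows from a piecewise mean value argument: on any compact $K$, the one-sided partial derivatives $c^*_x = -\tfrac{1}{\gamma} f^{-1/\gamma-1} f_x$ (with the constant $z_\alpha^{1/\gamma}$ or $z_\beta^{1/\gamma}$ absorbed into the outer derivative in the two adjustment regions), and similarly $c^*_{h_i}$ expressed via $f_x, f_{h_i}$ from Lemma \ref{lemma: f_regularity}, are continuous in the interior of each subregion and hence bounded on $K$.

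The main obstacle, and the final step, is to upgrade the local Lipschitz property of $\pi^*$ to a uniform Lipschitz bound on the entire admissible domain. Here the plan is to combine the homogeneity identity \eqref{eq: value_homo}, which propagates to $\pi^*(x, h_1, h_2) = h_1 \pi^*(x/h_1, 1, h_2/h_1)$ and consequently $\pi^*_x(x, h_1, h_2) = \pi^*_x(x/h_1, 1, h_2/h_1)$, with the asymptotic formulas in Proposition \ref{prop:asymptotic}, which give $\pi^*(x, 1, \tilde h_2)/x \to \tfrac{\mu-r}{\sigma^2 \gamma}$ uniformly for $\tilde h_2 \in [0, 1]$ as $x \to 0$ and as $x \to \infty$. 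This forces $\pi^*_x$ on the reduced slice $\{h_1 = 1, h_2 \in [0, 1]\}$ to be continuous and to converge to the common finite limit $\tfrac{\mu-r}{\sigma^2 \gamma}$ at both ends, hence uniformly bounded; scaling then propagates the bound to the full domain. Bounds on $\pi^*_{h_1}$ and $\pi^*_{h_2}$ are read off by differentiating the scaling identity and invoking the just-established uniform bound on $\pi^*_x$ together with the boundedness of $\pi^*/x$.
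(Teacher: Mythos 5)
Your treatment of $c^*$ and of the continuity of $\pi^*$ across the four boundary curves is sound and consistent with what the paper does (the paper in fact dismisses $c^*$ in one line and relies on the smooth-fit/super-contact construction for continuity of $v_{yy}$, exactly as you argue). The problem is the final step, which is where the real content of the lemma lies. You infer a uniform bound on $\pi^*_x$ over the reduced slice $\{h_1=1,\ h_2\in(0,1]\}$ from the two facts that $\pi^*(x,1,\tilde h_2)/x\to\frac{\mu-r}{\sigma^2\gamma}$ as $x\to 0$ and as $x\to\infty$ and that $\pi^*_x$ is continuous in between. This inference is invalid: $\pi^*(x)/x\to L$ does not imply $\pi^*_x(x)\to L$, nor even that $\pi^*_x$ stays bounded (consider $Lx+\sin(x^2)$). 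Proposition \ref{prop:asymptotic} controls only the ratio $\pi^*/x$, not its derivative, so it cannot by itself rule out oscillatory blow-up of $\pi^*_x$ near $x=0$ or $x=\infty$. Moreover, even granting pointwise limits in $x$, the slice is not compact in the $\tilde h_2$ direction, and you assert uniformity of the asymptotics in $\tilde h_2\in[0,1]$ without proof; the boundary curves and the coefficients $C_i(1,\tilde h_2)$ degenerate as $\tilde h_2\to 0$, so this uniformity is precisely the delicate point. Finally, your derivation of bounds on $\pi^*_{h_1},\pi^*_{h_2}$ from the scaling identity produces terms of the form $\tilde x\bigl(\pi^*(\tilde x,1,\tilde h_2)/\tilde x-\pi^*_x(\tilde x,1,\tilde h_2)\bigr)$, which require a \emph{rate} of convergence $O(1/\tilde x)$ of the ratio to the derivative, not just boundedness of each factor — another unproven quantitative statement.

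The paper closes these gaps by brute force rather than by soft asymptotic arguments: it differentiates the implicit equations \eqref{eq: f_obtain} defining $f$ in each of the five regions, substitutes the resulting expression for $f_x$ (and $f_{h_i}$ via Lemma \ref{lemma: f_regularity}) into $\pi^*_x$, $\pi^*_{h_1}$, $\pi^*_{h_2}$, and then bounds each resulting expression \emph{uniformly over all} $h_1\ge h_2$ using the explicit formulas \eqref{eq: C_solu} for the $C_i$ and the range of $f$ in each region (e.g.\ reducing to ratios of the form $\frac{K_1(h_1/h_2)^{1+\gamma(m_1-1)}+K_2}{K_3(h_1/h_2)^{1+\gamma(m_1-1)}+K_4}$, which are bounded between $K_1/K_3$ and $(K_1+K_2)/(K_3+K_4)$, and to showing a certain auxiliary function $\psi(f)$ stays below a strictly negative constant). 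If you want to keep your homogeneity-based reduction, you would still need to carry out an explicit asymptotic expansion of $\pi^*_x$ itself (not of $\pi^*/x$) at both ends of the slice and a uniform-in-$\tilde h_2$ estimate, at which point you are essentially redoing the paper's computation.
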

\begin{proof}
See Section \ref{sec: proof_control_Lipschitz}.
\end{proof}

Based on the previous results, the next proposition holds.  
\begin{proposition}\label{prop: SDE_solu_exist}
SDE \eqref{eq: SDE_opt} has a unique strong solution $(X^*, H_1^*, H_2^*)$ for any initial condition $(x,h_1,h_2)\in\mathbb{R}_+^3$ such that $h_1\geq h_2$.
\end{proposition}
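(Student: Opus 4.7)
The plan is to construct the triple $(X^*, H_1^*, H_2^*)$ explicitly via the dual state-price process, and then to establish pathwise uniqueness using the (local) Lipschitz regularity already supplied by Lemma \ref{lemma: control_Lipschitz}. The key observation is that although the forward system \eqref{eq: SDE_opt} couples the wealth to two path-dependent running extrema, the corresponding dual object is a simple geometric Brownian motion, so a strong solution is available by direct construction.

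First I would set $Y_t^* := y^* e^{\delta t} \xi_t$, where $y^* = y^*(x, h_1, h_2)$ is the unique solution of the budget constraint in Theorem \ref{thm: verification}. Since $Y^*$ satisfies $dY_t^* = Y_t^*((\delta-r)\,dt - \kappa\,dW_t)$, it is a classical strong solution on $(\Omega,\mathcal{F},\mathbb{F},\mathbb{P})$. The monotone processes $H_{1,t}^*$ and $H_{2,t}^*$ are then defined pathwise by the closed-form running-extrema expressions of Theorem \ref{thm: verification}, with the initial jump handled exactly as described there when the initial wealth $x$ lies outside $[x_{\mathrm{gloom}}(h_1,h_2), x_{\mathrm{lavs}}(h_1,h_2)]$. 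Finally, I would set $X_t^* := -v_y(Y_t^*, H_{1,t}^*, H_{2,t}^*)$, which is a well-defined continuous process by the strict convexity in Lemma \ref{lemma: v_convex} and the continuity of $f$ across region interfaces established in Lemma \ref{lemma: f_regularity}.

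To see that this $X^*$ actually solves \eqref{eq: SDE_opt}, I would apply the generalized It\^o formula to $-v_y$ along $(Y^*, H_1^*, H_2^*)$. Inside each of the five regions of Corollary \ref{cor: main_res}, the extrema $H_1^*$ and $H_2^*$ are locally constant, and differentiating the dual PDE \eqref{eq: HJB_dual} once in $y$ matches the drift and diffusion of $dX_t^*$ to $(rX_t^* + (\mu-r)\pi^* - c^*)\,dt + \sigma \pi^*\,dW_t$, with the identifications $c^* = c^\dagger(Y_t^*, H_{1,t}^*, H_{2,t}^*)$ and $\pi^* = \pi^\dagger(Y_t^*, H_{1,t}^*, H_{2,t}^*)$ following from the inverse Legendre transform formulas \eqref{eq: solu_consume}--\eqref{eq: solu_portfolio}. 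The terms produced by the monotone increments of $H_1^*$ and $H_2^*$ take the form $v_{yh_1}\,dH_{1,t}^*$ and $v_{yh_2}\,dH_{2,t}^*$; because $H_{1,t}^*$ can grow only on $\{Y_t^* = z_\alpha (H_{1,t}^*)^{-\gamma}\}$ (and analogously for $H_2^*$), the super-contact conditions \eqref{eq: dual_boundary_adjust2} annihilate both contributions.

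For uniqueness, let $(\tilde X, \tilde H_1, \tilde H_2)$ be another strong solution sharing the same initial data and Brownian driver, and set $\tilde c_t := c^*(\tilde X_t, \tilde H_{1,t}, \tilde H_{2,t})$. By admissibility, $\tilde H_1$ and $\tilde H_2$ must coincide with the running maximum and minimum of $\tilde c$, so on each excursion between successive visits to the boundaries $x_{\mathrm{lavs}}$ and $x_{\mathrm{gloom}}$ the coefficients of the SDE for $\tilde X$ are locally Lipschitz in $x$ by Lemma \ref{lemma: control_Lipschitz}. A stopping-time localization combined with a Gr\"onwall estimate then forces $\tilde X = X^*$ on each excursion, and the minimality property of the running extrema in turn pins down $\tilde H_i = H_i^*$, yielding global pathwise uniqueness by induction over excursions. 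The main obstacle is handling the interface behavior: the feedback controls are only piecewise smooth across the four curves $x_{\mathrm{gloom}}, x_{\mathrm{valy}}, x_{\mathrm{peak}}, x_{\mathrm{lavs}}$, and the vanishing of the It\^o terms attached to $dH_i^*$ rests entirely on the super-contact conditions, so extra care is required to verify that no singular drift or martingale term survives at the two reflection boundaries and that the Gr\"onwall estimate can be propagated across boundary touchings.
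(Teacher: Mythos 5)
Your proposal is correct in substance but takes a genuinely different route from the paper, which in fact gives almost no argument at all: after establishing Lemma \ref{lemma: f_regularity} (piecewise $C^1$ regularity and continuity of $f$ across the four interfaces) and Lemma \ref{lemma: control_Lipschitz} (local Lipschitz continuity of $c^*$ and Lipschitz continuity of $\pi^*$ in $(x,h_1,h_2)$), the paper simply asserts that the proposition follows, implicitly invoking the standard existence--uniqueness theory for (functional) SDEs with Lipschitz coefficients, where the running extrema $H_1^*, H_2^*$ are treated as $1$-Lipschitz functionals of the consumption path. Your approach instead constructs the solution explicitly from the dual geometric Brownian motion $Y^*$, defines $H_1^*, H_2^*$ pathwise and sets $X^* = -v_y(Y^*,H_1^*,H_2^*)$, then verifies the SDE by It\^o's formula with the super-contact conditions \eqref{eq: dual_boundary_adjust2} killing the $dH_i^*$ terms; this is more informative than the paper's treatment and is the same mechanism that already underlies the verification argument in Section \ref{sec: proof_verification}, so the existence half of your proof is solid and arguably fills a gap the paper leaves open. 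The one place you should tighten is uniqueness: ``induction over excursions'' is not literally available, because the set of times at which $Y^*$ touches its running infimum (equivalently, at which $X^*$ sits on $x_\mathrm{lavs}$) is a closed set with no isolated structure, so the excursion intervals are not well-ordered and cannot be enumerated inductively. The clean fix is either (a) the paper's implicit route --- bound $|H_{i,t}-\tilde H_{i,t}|$ by the running supremum of $|c^*(X_s,H_{1,s},H_{2,s})-c^*(\tilde X_s,\tilde H_{1,s},\tilde H_{2,s})|$ and run a single global Gr\"onwall estimate on $\E[\sup_{s\le t}|X_s-\tilde X_s|^2]$ using the joint Lipschitz property in all three arguments --- or (b) stay in the dual picture and show that $f(\tilde X_t,\tilde H_{1,t},\tilde H_{2,t})$ must solve the same linear SDE as $Y^*$, whose strong uniqueness is classical, and then invert via Lemma \ref{lemma: v_convex}. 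Either repair closes the argument without the excursion decomposition.
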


\subsection{Proof of Lemma \ref{lemma: v_convex}}\label{sec: proof_lemma_v_convex}
We aim to show the result in different regions respectively.

%Therefore, we then consider $v_{yy}(y,h_1,h_2)$ in region $z_\alpha h_1^{-\gamma} \leq y \leq z_\beta h_2^{-\gamma}$. \\
%
\begin{itemize}
    \item[(i)]In region $z_\alpha h_1^{-\gamma} \leq y < h_1^{-\gamma}$. We have
\end{itemize}
$$
\begin{aligned}
v_{yy}(y,h_1,h_2) &= m_1(m_1-1)C_1(h_1,h_2)y^{m_1-2} + m_2(m_2-1)C_2(h_1)y^{m_2-2}. \\
%&= y^{m_2-2}\bigg(m_1(m_1-1)C_1(h_1,h_2)y^{m_1-m_2} + m_2(m_2-1)C_2(h_1,h_2)\bigg).
\end{aligned}
$$
It is sufficient to show $C_1(h_1,h_2) > 0$ and $C_2(h_1) > 0$.
Because $m_1 > 1$, $m_2<0$, $m_1>\gamma^*>m_2$, and $\gamma^*<1$, we first have
$$
\begin{aligned}
C_2(h_1,h_2)
%&= \frac{1-\gamma^*}{(m_1-m_2)(m_2-\gamma^*)}\bigg(\frac{m_1(\alpha\delta-1)}{\delta}z_\alpha^{-m_2} + \frac{m_1-1}{r}z_\alpha^{1-m_2}\bigg) h_1^{1+\gamma(m_2-1)}\\
&= \frac{1-\gamma^*}{(m_1-m_2)(m_2-\gamma^*)}\bigg(\frac{m_1(\alpha\delta-1)}{\delta} + \frac{m_1-1}{r}z_\alpha\bigg) z_\alpha^{-m_2}h_1^{1+\gamma(m_2-1)}\\
&\geq \frac{1-\gamma^*}{(m_1-m_2)(m_2-\gamma^*)}\bigg(\frac{m_1(\alpha\delta-1)}{\delta} + \frac{m_1-1}{r}(1-\alpha\delta)\bigg) z_\alpha^{-m_2}h_1^{1+\gamma(m_2-1)}\\
&=\frac{2(1-\gamma^*)(1-\alpha\delta)}{\kappa^2(m_1-m_2)(\gamma^*-m_2)m_2(m_2-1)} z_\alpha^{-m_2}h_1^{1+\gamma(m_2-1)} > 0,
\end{aligned}
$$
where the first inequality holds because $z_\alpha\in(0, 1-\alpha\delta]$.
Then we consider $C_1(h_1,h_2)$.
By similar argument for $C_2(h_1,h_2)$, we can also deduce that $C_5(h_1,h_2) > 0$.
Therefore,
$$
C_1(h_1,h_2) = C_5(h_2) + \frac{2(1-\gamma^*)}{\kappa^2(m_1-m_2)m_1(m_1-1)(m_1-\gamma^*)}\big(h_1^{1+\gamma(m_1-1)} - h_2^{1+\gamma(m_1-1)}\big) > 0.
$$
\begin{itemize}
\item[(ii)]In region $h_2^{-\gamma} < y \leq z_\beta h_2^{-\gamma}$. Note that
\end{itemize}
$$
\begin{aligned}
v_{yy}(y,h_1,h_2) &= m_1(m_1-1)C_5(h_2)y^{m_1-2} + m_2(m_2-1)C_6(h_1,h_2)y^{m_2-2}.
\end{aligned}
$$
It is sufficient to show $C_5(h_1,h_2) > 0$ and $C_6(h_1) > 0$.
As our discussion above, $C_5(h_2) > 0$.
Then we consider $C_6(h_1,h_2)$.
Indeed,
$$
C_6(h_1,h_2) = C_2(h_1) + \frac{2(1-\gamma^*)}{\kappa^2(m_1-m_2)m_2(m_2-1)(m_2-\gamma^*)}\big(h_1^{1+\gamma(m_2-1)} - h_2^{1+\gamma(m_1-1)}\big) > 0.
$$
\begin{itemize}
\item[(iii)]In region $h_1^{-\gamma}\leq y \leq h_2^{-\gamma}$. By $C_5(h_1,h_2) > 0$ and $C_2(h_1) > 0$, we obtain
\end{itemize}
$$
\begin{aligned}
&v_{yy}(y,h_1,h_2) \\
%&\quad=m_1(m_1-1)C_3(h_1,h_2)y^{m_1-2} + m_2(m_2-1)C_4(h_1,h_2)y^{m_2-2} + \frac{2(\gamma^*-1)}{\kappa^2(\gamma^*-m_1)(\gamma^*-m_2)}y^{\gamma^*-2} \\
&\quad= \bigg(m_1(m_1-1)C_3(h_2)y^{m_1-\gamma^*} + m_2(m_2-1)C_4(h_1)y^{m_2-\gamma^*} \\
&\qquad+ \frac{2(\gamma^*-1)}{\kappa^2(\gamma^*-m_1)(\gamma^*-m_2)}\bigg)y^{\gamma^*-2} \\
&\quad> \bigg(m_1(m_1-1)(C_3(h_2)-C_5(h_2))y^{m_1-\gamma^*} + m_2(m_2-1)(C_4(h_1)-C_2(h_1))y^{m_2-\gamma^*} \\
&\qquad+ \frac{2(\gamma^*-1)}{\kappa^2(\gamma^*-m_1)(\gamma^*-m_2)}\bigg)y^{\gamma^*-2} \\
&\quad \geq \bigg(-\frac{2(1-\gamma^*)}{\kappa^2(m_1-m_2)(m_1-\gamma^*)} + \frac{2(1-\gamma^*)}{\kappa^2(m_1-m_2)(m_2-\gamma^*)}+ \frac{2(\gamma^*-1)}{\kappa^2(\gamma^*-m_1)(\gamma^*-m_2)}\bigg)y^{\gamma^*-2} \\
&\quad= 0.
\end{aligned}
$$
\begin{itemize}
\item[(iv)] In region $y < z_\alpha h_1^{-\gamma}$. It follows that
\end{itemize}
$$
\begin{aligned}
v_y(y, h_1, h_2) 
&= v_y\big(y, \bh_1(y), h_2\big) + v_{h_1}\big(y, \bh_1(y), h_2\big)\cdot\frac{\partial \bh_1(y)}{\partial y} - \alpha\bh_1(y)^{-\gamma}\frac{\partial \bh_1(y)}{\partial y}\\
&= v_y\big(y, \bh_1(y), h_2\big) + \big(v_{h_1}\big(y, \bh_1(y), h_2\big) - \alpha V'(\bh_1(y))\big)\cdot\frac{\partial \bh_1(y)}{\partial y} \\
&= v_y\big(y, \bh_1(y), h_2\big),
\end{aligned}
$$
where the last equation holds because of \eqref{eq: dual_boundary_adjust1}, and thus
$$
\begin{aligned}
v_{yy}(y, h_1, h_2) = \frac{\partial v_y\big(y, \bh_1(y), h_2\big)}{\partial y}
&= v_{yy}(y, \bh_1(y), h_2) - v_{yh_1}(y, \bh_1(y), h_2)\cdot\frac{\partial \bh_1(y)}{\partial y}\\
&= v_{yy}(y, \bh_1(y), h_2) > 0,
\end{aligned}
$$
where the last equation holds because of \eqref{eq: dual_boundary_adjust2}.
\begin{itemize}
    \item[(v)] In region $y > z_\beta h_2^{-\gamma}$. Similarly,  we also have $v_y(y, h_1, h_2) = v_y\big(y, h_1, \uh_2(y)\big)$, and thus $v_{yy}(y, h_1, h_2) > 0$.
\end{itemize}
Thus, we complete the proof of this lemma.

\subsection{Proof of Lemma \ref{lemma: control_Lipschitz}}\label{sec: proof_control_Lipschitz}
From the definition of $c^*(x,h_1,h_2)$ in \eqref{eq: solu_consume}, it is clear that $c^*$ is locally Lipschitz.

Next, we prove that $\pi^*(x,h_1,h_2)$ is Lipschitz.
From \eqref{eq: f_obtain}, for $ x_\mathrm{peak}(h_1,h_2) < x \leq x_\mathrm{lavs}(h_1, h_2)$, it follows that the function $f$ is determined by
\begin{eqnarray}
x=-m_1C_1(h_1,h_2)f(x,h_1,h_2)^{m_1-1}-m_2C_2(h_1)f(x,h_1,h_2)^{m_2-1}+\frac{h_1}{r}.\nonumber
\end{eqnarray}
Differentiating the above equation with respect to $x$, we have
\begin{eqnarray}
1=-\big(m_1(m_1-1)C_1(h_1,h_2)f(x,h_1,h_2)^{m_1-2}+m_2(m_2-1)C_2(h_1)f(x,h_1,h_2)^{m_2-2}\big)f_x(x,h_1,h_2).\nonumber
\end{eqnarray}
Plugging this equation into $\pi^*_x$ and using Lemma \ref{lemma: f_regularity}, we obtain
\begin{eqnarray}
\hspace{-0.3cm}&&\hspace{-0.3cm}
\pi^*_x(x,h_1,h_2)
\nonumber\\
\hspace{-0.3cm}&=&\hspace{-0.3cm}
\frac{\mu-r}{\sigma^2}\left(m_1(m_1-1)(m_1-m_2)C_1(h_1,h_2)f(x,h_1,h_2)^{m_1-2}f_x(x,h_1,h_2)+1-m_2\right)
\nonumber\\
%\hspace{-0.3cm}&=&\hspace{-0.3cm}
%\frac{\mu-r}{\sigma^2}\left(\frac{m_1(m_1-1)(m_1-m_2)C_1(h_1,h_2)f(x,h_1,h_2)^{m_1-2}}{-m_1(m_1-1)C_1(h_1,h_2)f(x,h_1,h_2)^{m_1-2} - m_2(m_2-1)C_2(h_1,h_2)f(x,h_1,h_2)^{m_2-2}}+1-m_2\right)
%\nonumber\\
%\hspace{-0.3cm}&=&\hspace{-0.3cm}
%\frac{\mu-r}{\sigma^2}\left(\frac{m_1(m_1-1)(m_1-m_2)C_1(h_1,h_2)}{-m_1(m_1-1)C_1(h_1,h_2)- m_2(m_2-1)C_2(h_1)f(x,h_1,h_2)^{m_2-m_1}}+1-m_2\right)
%\nonumber\\
\hspace{-0.3cm}&\leq&\hspace{-0.3cm}
\frac{\mu-r}{\sigma^2}\left(\frac{m_1(m_1-1)(m_1-m_2)C_1(h_1,h_2)}{-m_1(m_1-1)C_1(h_1,h_2)- m_2(m_2-1)C_2(h_1)(z_{\alpha}h_1^{-\gamma})^{m_2-m_1}}+1-m_2\right),\nonumber
\end{eqnarray}
where in the last inequality we have used the facts that the function $y\mapsto-m_1(m_1-1)C_1(h_1,h_2)- m_2(m_2-1)C_2(h_1)(z_{\alpha}y^{-\gamma})^{m_2-m_1}$ is strictly negative and decreasing, and $f(x,h_1,h_2)\in[z_{\alpha}h_1^{-\gamma},h_1^{-\gamma})$ for $x\in(x_\mathrm{peak}(h_1,h_2),x_\mathrm{lavs}(h_1, h_2)]$.
Furthermore, in view of functions $(C_i(h_1,h_2))_{i=1,...,6}$ in \eqref{eq: C_solu}, it is easy to verify that the above inequality can be rewritten as
\begin{eqnarray}
\pi^*_x(x,h_1,h_2)
\hspace{-0.3cm}&\leq&\hspace{-0.3cm}
\frac{\mu-r}{\sigma^2}\left(\frac{m_1(m_1-1)(m_1-m_2)C_1(h_1,h_2)}{-m_1(m_1-1)C_1(h_1,h_2)- m_2(m_2-1)C_2(h_1)(z_{\alpha}h_1^{-\gamma})^{m_2-m_1}}+1-m_2\right)
\nonumber\\
%\hspace{-0.3cm}&=&\hspace{-0.3cm}
%\frac{\mu-r}{\sigma^2}\left(\frac{K_1h_1^{1+\gamma(m_1-1)}+K_2h_2^{1+\gamma(m_1-1)}}{K_3h_1^{1+\gamma(m_1-1)}+K_4h_2^{1+\gamma(m_1-1)}}+1-m_2\right)
%\nonumber\\
\hspace{-0.3cm}&=&\hspace{-0.3cm}
\frac{\mu-r}{\sigma^2}
\bigg(\frac{K_1(h_1/h_2)^{1+\gamma(m_1-1)}+K_2}{K_3(h_1/h_2)^{1+\gamma(m_1-1)}+K_4} + 1 - m_2\bigg),
\nonumber
\end{eqnarray}
 where $K_1,K_2,K_3$ and $K_4$ are some constants independent from $x,h_1,h_2$. % with $K_3<0$ and $K_3h_1^{1+\gamma(m_1-1)}+K_4h_2^{1+\gamma(m_1-1)}<0$.
Hence, by $h_1\geq h_2$, it holds that
%\begin{eqnarray}
%\hspace{-0.3cm}&&\hspace{-0.3cm}
%\frac{\mu-r}{\sigma^2}\left(\frac{K_1h_1^{1+\gamma(m_1-1)}+K_2h_2^{1+\gamma(m_1-1)}}{K_3h_1^{1+\gamma(m_1-1)}+K_4h_2^{1+\gamma(m_1-1)}}+1-m_2\right)
%\nonumber\\
%\hspace{-0.3cm}&=&\hspace{-0.3cm}
%\frac{\mu-r}{\sigma^2}\left(\frac{K_1}{K_2}+\left(K_2-\frac{K_1K_4}{K_3}\right)\frac{1}{K_3(h_1/h_2)^{1+\gamma(m_1-1)}+K_4}+1-m_2\right)
%\nonumber\\
%\hspace{-0.3cm}&\leq&\hspace{-0.3cm}
%\frac{\mu-r}{\sigma^2}\left(\left|\frac{K_1}{K_2}+\frac{K_2-\frac{K_1K_4}{K_3}}{K_3+K_4}\right|+1-m_2\right),
%\end{eqnarray}
$$
\frac{K_1(h_1/h_2)^{1+\gamma(m_1-1)}+K_2}{K_3(h_1/h_2)^{1+\gamma(m_1-1)}+K_4} 
$$
is bounded by the maximum/minimum of $\frac{K_1}{K_3}$ and $\frac{K_1+K_2}{K_3+K_4}$,
which yields $\pi^*_x$ is bounded for all $h_1\geq h_2$ and $x_\mathrm{peak}(h_1,h_2) < x \leq x_\mathrm{lavs}(h_1, h_2)$.

For the second case expression, similar calculations as for the first yields that
\begin{eqnarray}\label{eq:pi_x.case2}
%\hspace{-0.3cm}&&\hspace{-0.3cm}
\pi^*_x(x,h_1,h_2)
%\nonumber\\
\hspace{-0.3cm}&=&\hspace{-0.3cm}
\frac{\mu-r}{\sigma^2}\bigg(m_1(m_1-1)(m_1-m_2)C_3(h_1,h_2)f(x,h_1,h_2)^{m_1-2}f_x(x,h_1,h_2)%\right.
\nonumber\\
\hspace{-0.3cm}&&\hspace{1cm}
\left.+\frac{2(\gamma^*-1)(\gamma^*-m_2)}{\kappa^2(\gamma^*-m_1)(\gamma^*-m_2)}f(x,h_1,h_2)^{\gamma^*-2}f_x(x,h_1,h_2)+1-m_2\right)
%\nonumber\\
%\hspace{-0.3cm}&=&\hspace{-0.3cm}
%\frac{\mu-r}{\sigma^2}\left(\left({m_1(m_1-1)(m_1-m_2)C_3(h_1,h_2)f(x,h_1,h_2)^{m_1-2}+\frac{2(\gamma^*-1)}{\kappa^2(\gamma^*-m_1)}f(x,h_1,h_2)^{\gamma^*-2}}\right)\right.
%\nonumber\\
%\hspace{-0.3cm}&&\hspace{1cm}
%\times\left(-m_1(m_1-1)C_3(h_1,h_2)f(x,h_1,h_2)^{m_1-2}- m_2(m_2-1)C_4(h_1,h_2)f(x,h_1,h_2)^{m_2-2}\right.
%\nonumber\\
%\hspace{-0.3cm}&&\hspace{1cm}
%\left.\left.- \frac{2(\gamma^*-1)}{\kappa^2(\gamma^*-m_1)(\gamma^*-m_2)}f(x,h_1,h_2)^{\gamma^*-2}\right)^{-1}+1-m_2\right)
\nonumber\\
\hspace{-0.3cm}&=&\hspace{-0.3cm}
\frac{\mu-r}{\sigma^2}\left(\left({m_1(m_1-1)(m_1-m_2)C_3(h_2)f(x,h_1,h_2)^{m_1-\gamma^*}+\frac{2(\gamma^*-1)}{\kappa^2(\gamma^*-m_1)}}\right)\right.
\nonumber\\
\hspace{-0.3cm}&&\hspace{1cm}
\times\left(-m_1(m_1-1)C_3(h_2)f(x,h_1,h_2)^{m_1-\gamma^*}- m_2(m_2-1)C_4(h_1)f(x,h_1,h_2)^{m_2-\gamma^*}\right.
\nonumber\\
\hspace{-0.3cm}&&\hspace{1cm}
\left.\left.- \frac{2(\gamma^*-1)}{\kappa^2(\gamma^*-m_1)(\gamma^*-m_2)}\right)^{-1}
+1-m_2\right)
\nonumber\\
\hspace{-0.3cm}&=&\hspace{-0.3cm}
 \frac{\mu-r}{\sigma^2}\bigg(\frac{A(x,h_1,h_2)}{\psi(f(x,h_1,h_2)} + 1-m_2\bigg).
\nonumber
\end{eqnarray}
Here, according to the proof of Lemma \ref{lemma: v_convex},  it is easy to see
$$
|A(x,h_1,h_2)| = \big|K_1 h_2^{1+\gamma(m_1-1)}f(x, h_1, h_2)^{m_1-\gamma^*} + K_2\big|
\leq |K_1| + |K_2|,
$$
where $K_1, K_2$ are some constants independent of $x, h_1, h_2$.  The function $y\to \psi(y)$ is given by
$$
\begin{aligned}
\psi(y) :=& -m_1(m_1-1)C_3(h_2)y^{m_1-\gamma^*}- m_2(m_2-1)C_4(h_1)y^{m_2-\gamma^*}\\
&- \frac{2(\gamma^*-1)}{\kappa^2(\gamma^*-m_1)(\gamma^*-m_2)}.
\end{aligned}
$$
%for $y\in[h_1^{-\gamma}, h_2^{-\gamma}]$.
%It follows from $x\in[x_\mathrm{valy}(h_1,h_2), x_\mathrm{peak}(h_1,h_2)]$ that $f(x,h_1,h_2)\in[h_1^{-\gamma},h_2^{-\gamma}]\subset\mathbb{R}_+$ for all $h_1\geq h_2>0$.
%Let us denote a function
%{\blue
To guarantee the boundness of $\pi^*_x(x, h_1, h_2)$, it is sufficient to show $\psi(f(x, h_1, h_2)) \leq K$ for some negative constant $K$ independent of $x, h_1, h_2$.
The derivative of $\psi$:
$$
\psi'(y) = -m_1(m_1-1)(m_1-\gamma^*)C_3(h_1,h_2)y^{m_1-m_2} - m_2(m_2-1)(m_2-\gamma^*)C_4(h_1,h_2)\big)y^{m_2-1-\gamma^*}.
$$
Equation $\psi'(y)=0$ has at most one root $y'$ (if exist), and thus the maximum value of $\psi(y)$ is the max of $\psi(h_1^{-\gamma})$, $\psi(h_2^{-\gamma})$, and $\psi(y')$.
First,
$$
\psi(h_1^{-\gamma}) = K_3 \bigg(\frac{h_2}{h_1}\bigg)^{1+\gamma(m_1-1)}  + K_4  - \frac{2(\gamma^*-1)}{\kappa^2(\gamma^*-m_1)(\gamma^*-m_2)} < 0
$$
for all possible values of $h_1\geq h_2$, where $K_3, K_4$ are some constants independent of $x, h_1, h_2$.
Therefore,
$$
\psi(h_1^{-\gamma}) \leq \max(K_3, 0) + K_4 - \frac{2(\gamma^*-1)}{\kappa^2(\gamma^*-m_1)(\gamma^*-m_2)}< 0.
$$
Similarly, we also get
$$
\psi(h_2^{-\gamma}) < K_3 + \max(K_4, 0) - \frac{2(\gamma^*-1)}{\kappa^2(\gamma^*-m_1)(\gamma^*-m_2)}  < 0.
$$
Finally we consider $\psi(y')$ such that $\psi'(y') = 0$.
According to $\psi'(y') = 0$, we first have
$$
m_2(m_2-1)(m_2-\gamma^*)C_4(h_1,h_2) = -m_1(m_1-1)(m_1-\gamma^*)C_3(h_1,h_2)(y')^{m_1-m_2},
$$
and thus
$$
\begin{aligned}
\psi(y') %&= -m_1(m_1-1)\bigg(1-\frac{m_1-\gamma^*}{m_2-\gamma^*}\bigg)C_3(h_1,h_2)(y')^{m_1-\gamma^*} - \frac{2(\gamma^*-1)}{\kappa^2(\gamma^*-m_1)(\gamma^*-m_2)} \\
&= K_3\bigg(\frac{h_1}{h_2}\bigg)^{\frac{(m_1-\gamma^*)(m_2-\gamma^*)}{(m_1-m_2)(1-\gamma^*)}} - \frac{2(\gamma^*-1)}{\kappa^2(\gamma^*-m_1)(\gamma^*-m_2)} \\
&\leq \max(K_3, 0) - \frac{2(\gamma^*-1)}{\kappa^2(\gamma^*-m_1)(\gamma^*-m_2)} < 0.
\end{aligned}
$$

For the third case expression, one can obtain that
\begin{eqnarray}
\hspace{-0.3cm}&&\hspace{-0.3cm}
\pi^*_x(x,h_1,h_2)
\nonumber\\
\hspace{-0.3cm}&=&\hspace{-0.3cm}
\frac{\mu-r}{\sigma^2}\left(\frac{m_1(m_1-1)(m_1-m_2)C_5(h_2)}{-m_1(m_1-1)C_5(h_1,h_2)- m_2(m_2-1)C_6(h_1,h_2)f(x,h_1,h_2)^{m_2-m_1}}+1-m_2\right)
\nonumber\\
%\hspace{-0.3cm}&\leq&\hspace{-0.3cm}
%\frac{\mu-r}{\sigma^2}\left(\frac{m_1(m_1-1)(m_1-m_2)C_5(h_2)}{-m_1(m_1-1)C_5(h_1,h_2)- m_2(m_2-1)C_6(h_1,h_2)(h_1^{-\gamma})^{m_2-m_1}}+1-m_2\right)
%\nonumber\\
\hspace{-0.3cm}&\leq&\hspace{-0.3cm}
\frac{\mu-r}{\sigma^2}\left(\frac{K_1}{K_2+K_3(h_1/h_2)^{\gamma(m_1-m_2)}}+1-m_2\right),
\end{eqnarray}
for some positive constants $K_1,k_2$ and $K_3$ independent from $h_1,h_2$. Hence, we have $\pi^*_x$ is bounded for all $h_1\geq h_2$ and $x_\mathrm{gloom}(h_1,h_2)\leq x\leq x_\mathrm{valy}(h_1,h_2)$.

 For the fourth and fifth cases, the agent adjusts the reference levels, and the optimal portfolio degenerates into the past three cases, therefore, one can obtain the boundedness of $\pi^*_x$.

We next prove the boundedness of $\pi^*_{h_1}$ and $\pi^*_{h_2}$.
From Lemma \ref{lemma: f_regularity}, we have
\begin{equation}
f_{h_1}(x, h_1, h_2) = -{g_{h_1}(f(x, h_1, h_2), h_1, h_2)}f_x(x,h_1,h_2).\nonumber
\end{equation}
For the first case expression, differentiating \eqref{eq: f_obtain} with respect to $h_1$, we have
\begin{eqnarray}
0
\hspace{-0.3cm}&=&\hspace{-0.3cm}
-m_1C_{1,h_1}(h_1,h_2)f(x,h_1,h_2)^{m_1-1}-m_1(m_1-1)C_1(h_1,h_2)f(x,h_1,h_2)^{m_1-2}f_{h_1}(x,h_1,h_2)
\nonumber\\
\hspace{-0.3cm}&&\hspace{-0.3cm}
-m_2C_{2,h_1}(h_1)f(x,h_1,h_2)^{m_2-1}-m_2(m_2-1)C_2(h_1)f(x,h_1,h_2)^{m_2-2}f_{h_1}(x,h_1,h_2)+\frac{1}{r},\nonumber
\end{eqnarray}
for $ x_\mathrm{peak}(h_1,h_2) < x \leq x_\mathrm{lavs}(h_1, h_2)$.
Plugging this equation into $\pi^*_{h_1}$, we obtain
\begin{eqnarray*}%\label{eq:pi_h1}
%\hspace{-0.3cm}&&\hspace{-0.3cm}
\pi^*_{h_1}(x,h_1,h_2)
%\nonumber\\
\hspace{-0.3cm}&=&\hspace{-0.3cm}
\frac{\mu-r}{\sigma^2}\left(
m_1(m_1-1)(m_1-m_2)C_1(h_1,h_2)f(x,h_1,h_2)^{m_1-2}f_{h_1}(x,h_1,h_2)\right.
\nonumber\\
\hspace{-0.3cm}&&\hspace{1cm}
\left.
+m_1(m_1-m_2)C_{1,h_1}(h_1,h_2)f(x,h_1,h_2)^{m_1-1}+\frac{m_2-1}{r}
\right)
\nonumber\\
\hspace{-0.3cm}&=&\hspace{-0.3cm}
\frac{\mu-r}{\sigma^2}\left(-m_1(m_1-1)(m_1-m_2)C_1(h_1,h_2)f(x,h_1,h_2)^{m_1-2}f_{x}(x,h_1,h_2)\right.
\nonumber\\
\hspace{-0.3cm}&&\hspace{0.3cm}
\left.
\times g_{h_1}(f(x,h_1,h_2),h_1,h_2)+m_1(m_1-m_2)C_{1,h_1}(h_1,h_2)f(x,h_1,h_2)^{m_1-1}+\frac{m_2-1}{r}\right).
\nonumber\\
\end{eqnarray*}
Note that %function
$$
\begin{aligned}
|C_1(h_1,h_2)f(x, h_1, h_2)^{m_1-2}f_x(x,h_1,h_2)| &\leq K_1, \\
\end{aligned}
$$
according to the proof of boundedness of $\frac{\partial \pi^*(x, h_1, h_2)}{\partial x}$, it follows that
$$
\begin{aligned}
&|g_{h_1}(f(x, h_1, h_2), h_1, h_2)| \\
=& \bigg|-m_1C_{1, h_1}(h_1, h_2) f(x, h_1, h_2)^{m_1-1} - m_2C_{2, h_1}(h_1)f(x, h_1, h_2)^{m_2-1} + \frac{1}{r}\bigg|
\leq K_2,
\end{aligned}
$$
by the expression of $C_1(h_1,h_2), C_2(h_1)$ and $f(x,h_1,h_2) \in [z_\alpha h_1^{-\gamma}, h_1^{-\gamma}]$,
and
$$
C_{1, h_1}(h_1,h_2)f(x, h_1, h_2)^{m_1-1} \leq K_3,
$$
where $K_1, K_2, K_3$ are constants independent of $x, h_1, h_2$. Therefore, we obtain that $\pi^*_{h_1}(x,h_1,h_2)$ is bounded in this case of expression.

For the second case expression, similar calculations as for the first yields that
\begin{eqnarray*}
%\hspace{-0.3cm}&&\hspace{-0.3cm}
\pi^*_{h_1}(x,h_1,h_2)
%\nonumber\\
\hspace{-0.3cm}&=&\hspace{-0.3cm}
\frac{\mu-r}{\sigma^2}\left(m_1(m_1-1)(m_1-m_2)C_3(h_2)f(x,h_1,h_2)^{m_1-2}f_{h_1}(x,h_1,h_2)
\right.
\nonumber\\
\hspace{-0.3cm}&&\hspace{1cm}
\left.+\frac{2(\gamma^*-1)(\gamma^*-m_2)}{\kappa^2(\gamma^*-m_1)(\gamma^*-m_2)}f(x,h_1,h_2)^{\gamma^*-2}f_{h_1}(x,h_1,h_2)\right)
\nonumber\\
\hspace{-0.3cm}&=&\hspace{-0.3cm}
\frac{\mu-r}{\sigma^2}\left(-m_1(m_1-1)(m_1-m_2)C_3(h_2)f(x,h_1,h_2)^{m_1-2}f_x(x,h_1,h_2)
\right.
\nonumber\\
\hspace{-0.3cm}&&\hspace{1cm}
\left.+\frac{2(\gamma^*-1)(\gamma^*-m_2)}{\kappa^2(\gamma^*-m_1)(\gamma^*-m_2)}f(x,h_1,h_2)^{\gamma^*-2}\right){g_{h_1}(f(x, h_1, h_2), h_1, h_2)},
\end{eqnarray*}
for $x_\mathrm{valy}(h_1,h_2) \leq x \leq x_\mathrm{peak}(h_1,h_2)$.
Note that %the function
$$
\begin{aligned}
|g_{h_1}(f(x, h_1, h_2),h_1,h_2)|
%=& \bigg|-m_1C_{3, h_1}(h_1, h_2) f(x, h_1, h_2)^{m_1-1} - m_2C_{4, h_1}(h_1, h_2)f(x, h_1, h_2)^{m_2-1}\bigg|\\
=& |m_2 C_{4, h_1}(h_1)f(x, h_1, h_2)^{m_2-1}|
\leq  K,
\end{aligned}
$$
where $K$ is a constant independent of $x, h_1, h_2$ due to the fact that $f(x, h_1, h_2)\geq h_1^{-\gamma}$.
From the proof of the boundedness of $\pi^*_x$ for the second case, one knows
$$-m_1(m_1-1)(m_1-m_2)C_3(h_2)f(x,h_1,h_2)^{m_1-2}f_x(x,h_1,h_2)+\frac{2(\gamma^*-1)(\gamma^*-m_2)}{\kappa^2(\gamma^*-m_1)(\gamma^*-m_2)}f(x,h_1,h_2)^{\gamma^*-2}$$ is also bounded, implying that $\pi^*_{h_1}$ is bounded for all $h_1\geq h_2$ and $x_\mathrm{valy}(h_1,h_2) \leq x \leq x_\mathrm{peak}(h_1,h_2)$.

For the third case expression, one can obtain that
\begin{eqnarray}
\hspace{-0.3cm}&&\hspace{-0.3cm}
\pi^*_{h_1}(x,h_1,h_2)
\nonumber\\
\hspace{-0.3cm}&=&\hspace{-0.3cm}
\frac{\mu-r}{\sigma^2}\left(m_1(m_1-1)(m_1-m_2)C_5(h_2)f(x,h_1,h_2)^{m_1-2}f_{h_1}(x,h_1,h_2)\right)
\nonumber\\
\hspace{-0.3cm}&=&\hspace{-0.3cm}
\frac{\mu-r}{\sigma^2}\left(-m_1(m_1-1)(m_1-m_2)C_5(h_2)f(x,h_1,h_2)^{m_1-2}f_{x}(x,h_1,h_2)g_{h_1}(f(x,h_1,h_2),h_1,h_2)\right),\nonumber
\end{eqnarray}
for $x_\mathrm{gloom}(h_1,h_2) \leq x < x_\mathrm{valy}(h_1, h_2)$. Noting that the function
\begin{eqnarray}
|g_{h_1}(y,h_1,h_2)| = |-m_2C_{6,h_1}(h_1,h_2)y^{m_2-1}| = K (yh_1^{\gamma})^{m_2-1}\leq K, \nonumber
\end{eqnarray}
%is positive and decreasing with respect to $y$, it holds that
%\begin{eqnarray}
%g_{h_1}(f(x,h_1,h_2),h_1,h_2)
%\hspace{-0.3cm}&\leq&\hspace{-0.3cm}
%g_{h_1}(h_2^{-\gamma},h_1,h_2) ={\blue K \bigg(\frac{h_1}{h_2}\bigg)^{\gamma(m_2-1)} \},
%\nonumber%\\
%\hspace{-0.3cm}&\leq&\hspace{-0.3cm}
%\left|m_2\left(\frac{(1-\gamma^*)(1+\gamma(m_2-1))}{(m_1-m_2)(m_2-\gamma^*)}\left(\frac{m_1(\alpha\delta-1)}{\delta z_{\alpha}^{m_2}}+\frac{m_1-1}{r z_{\alpha}^{m_2-1}}\right)\right.\right.
%\nonumber\\
%\hspace{-0.3cm}&&\hspace{1cm}
%\left.\left.-\frac{2(\gamma^*-1)(1+\gamma(m_2-1))}{\kappa^2(m_1-m_2)m_2(m_2-1)(m_2-\gamma^*)}\right)\right|.
%\end{eqnarray}
 where $K$ is a constant independent of $x, h_1, h_2$.
From the proof of the boundedness of $\pi^*_x$ for the third case, one knows
$$m_1(m_1-1)(m_1-m_2)C_5(h_2)f(x,h_1,h_2)^{m_1-2}f_x(x,h_1,h_2)$$ is also bounded for all $h_1\geq h_2$ and for $x_\mathrm{gloom}(h_1,h_2) \leq x < x_\mathrm{valy}(h_1, h_2)$, implying $\pi^*_{h_1}$ is bounded for all $h_1\geq h_2$ and for $x_\mathrm{gloom}(h_1,h_2) \leq x < x_\mathrm{valy}(h_1, h_2)$.

%For the fourth and fifth cases, one can obtain the boundedness of $\pi^*_{h_1}$ by a similar method as used in the first three cases.
Similarly, one can also obtain the boundedness of $\pi^*_{h_2}$, which completes the proof.

\subsection{Proof of Theorem \ref{thm: sensitivity}}\label{sec:proof_thm_sensitivy}
The item (i) can be verified directly. In what follows, we prove item (ii). Since $\gamma^*=-(1-\gamma)/\gamma$, we have $\gamma^*\to 1$ as $\gamma\to \infty$. Then, for $h_1\geq h_2\geq 0$ and $\gamma^*=1$,  it follows from Corollary \ref{cor: main_res} that
\begin{align}\label{eq:c-bx-ux}
c^*(x_\mathrm{lavs}(h_1, h_2),h_1,h_2)=h_1,\quad c^*(x_\mathrm{gloom}(h_1, h_2),h_1,h_2)=h_2,
\end{align}
and
\begin{align}
x_\mathrm{lavs}(h_1, h_2)&=-m_1C_1(h_1, h_2)(z_\alpha h_1^{-\gamma})^{m_1-1} - m_2C_2( h_2)(z_\alpha h_1^{-\gamma})^{m_2-1} + \frac{h_1}{r}=\frac{h_1}{r},\\
x_\mathrm{gloom}(h_1, h_2) &= -m_1C_5(h_2)(z_\beta h_2^{-\gamma})^{m_1-1} - m_2C_6(h_1, h_2)(z_\beta h_2^{-\gamma})^{m_2-1} + \frac{h_2}{r}=\frac{h_2}{r}.\label{eq:bx-ux}
\end{align}
By using \eqref{eq:c-bx-ux}-\eqref{eq:bx-ux}, we get the desired result.

\subsection{Proof of Proposition \ref{prop:asymptotic}}\label{sec:proof_prop_asymptotic}
We only prove item (i) here, and item (ii) can be showed in a similar fashion. For $h_1\geq h_2\geq 0$, as $x\to \infty$, we have $x> x_\mathrm{lavs}(h_1,h_2)$ together with $y(x)\to 0$, where the function $x\to y(x)$ is determined by
\begin{align}\label{eq:x-y-infinity}
x =& -m_1C_1\big(\bh_1\big(y\big), h_2\big)y^{m_1-1}- m_2C_2(\bh_1\big(y\big)\big)y^{m_2-1}+\frac{\bh_1\big(y\big)}{r}.
\end{align}
Then it follows from \eqref{eq:x-y-infinity} and Proposition \ref{prop: solu_dual_value} that
\begin{align}\label{asymptotic-x-infinity}
x &= -\frac{2(1-\gamma^*)}{\kappa^2(m_1-m_2)(m_1-1)(m_1-\gamma^*)}z_\alpha^{\frac{1}{\gamma}+(m_1-1)}y^{-\frac{1}{\gamma}} -m_1C_3(h_2)y^{m_1-1}\nonumber\\
&\quad- \frac{m_2(1-\gamma^*)}{(m_1-m_2)(m_2-\gamma^*)}\bigg(\frac{m_1(\alpha\delta-1)}{\delta}z_\alpha^{-m_2} + \frac{m_1-1}{r}z_\alpha^{1-m_2}\bigg)z_\alpha^{\frac{1}{\gamma}+(m_2-1)}y^{-\frac{1}{\gamma}} +\frac{1}{r}z_\alpha^{\frac{1}{\gamma}}y^{-\frac{1}{\gamma}} \nonumber\\
&=z_\alpha^{\frac{1}{\gamma}-1}y^{-\frac{1}{\gamma}}\Bigg( -\frac{2(1-\gamma^*)}{\kappa^2(m_1-m_2)(m_1-1)(m_1-\gamma^*)}z_\alpha^{m_1}+\Bigg(\frac{1}{r}-\frac{m_2(1-\gamma^*)(m_1-1)}{r(m_1-m_2)(m_2-\gamma^*)}\Bigg)z_{\alpha}\nonumber\\
&\quad- \frac{m_1m_2(1-\gamma^*)(\alpha\delta-1)}{\delta(m_1-m_2)(m_2-\gamma^*)}\Bigg) -m_1C_3( h_2)y^{m_1-1}.
\end{align}
Using \eqref{eq: z_alpha_beta}, we obtain
\begin{align}\label{eq:z_alpha_infinity}
-\frac{m_1m_2(1-\gamma^*)(\alpha\delta-1)}{\delta(m_1-m_2)(m_2-\gamma^*)} =\frac{2(1-\gamma^*)}{\kappa^2 (m_1-1)(m_1-m_2)(m_2-\gamma^*)}z_\alpha^{m_1} + \frac{m_1(m_2-1)(1-\gamma^*)}{r(m_1-m_2)(m_2-\gamma^*)}z_\alpha.
\end{align}
Plugging this equation into \eqref{asymptotic-x-infinity}, one gets that
\begin{align}\label{eq:asymptotic-x-infinity}
x&=\frac{1}{(m_2-\gamma^*)}\Bigg( \frac{2(1-\gamma^*)}{\kappa^2(m_1-1)(m_1-\gamma^*)}z_\alpha^{m_1-1}+\frac{m_2-1}{r}\Bigg)z_\alpha^{\frac{1}{\gamma}}y^{-\frac{1}{\gamma}}-m_1C_3( h_2)y^{m_1-1}.
\end{align}
Thanks to Theorem \ref{thm: verification} and \eqref{eq:z_alpha_infinity}, for $x> x_\mathrm{lavs}(h_1,h_2)$, i.e., $y(x) < z_\alpha h_1^{-\gamma}$, the optimal portfolio function is give by
\begin{align}\label{eq:asymptotic-pi-infinity}
\pi^\dag(y, h_1, h_2)
&=\frac{\mu-r}{\sigma^2}\big(m_1(m_1-1)C_1\big(\bh_1(y), h_2\big)y^{m_1-1} + m_2(m_2-1)C_2\big(\bh_1(y)\big)y^{m_2-1}\big)\nonumber\\
%&=\frac{\mu-r}{\sigma^2}(1-\gamma^*)z_\alpha^{\frac{1}{\gamma}-1}y^{-\frac{1}{\gamma}}\Bigg( \frac{2}{\kappa^2(m_1-m_2)(m_1-\gamma^*)}z_\alpha^{m_1}+\frac{m_2(m_1-1)(m_2-1)}{r(m_1-m_2)(m_2-\gamma^*)}z_{\alpha}\Bigg)\nonumber\\
%&\quad+\frac{m_1m_2(m_2-1)(\alpha\delta-1)}{\delta(m_1-m_2)(m_2-\gamma^*)}\Bigg) +\frac{\mu-r}{\sigma^2}m_1(m_1-1)C_3( h_2)y^{m_1-1}\nonumber\\
&=\frac{\mu-r}{\sigma^2\gamma(m_2-\gamma^*)}\Bigg( \frac{2(1-\gamma^*)}{\kappa^2(m_1-1)(m_1-\gamma^*)}z_\alpha^{m_1-1}+\frac{m_2-1}{r}\Bigg) z_\alpha^{\frac{1}{\gamma}}y^{-\frac{1}{\gamma}}\nonumber\\
&\quad +\frac{\mu-r}{\sigma^2}m_1(m_1-1)C_3( h_2)y^{m_1-1}.
\end{align}
 By \eqref{eq:asymptotic-x-infinity} and \eqref{eq:asymptotic-pi-infinity}, we deduce that
\begin{align*}
\bar{\pi}^*_{\mathrm{ratio}}=\lim_{x\to\infty}\frac{\pi^*(x,h_1,h_2)}{x}=\lim_{y\to 0}\frac{\pi^\dag(y,h_1,h_2)}{x(y)}=\frac{\mu-r}{\sigma^2 \gamma}.
\end{align*}
Note that for $x > x_\mathrm{lavs}(h_1, h_2)$, i.e., $y(x) < z_\alpha h_1^{-\gamma}$, the optimal consumption function $c^\dag= \bh_1(y)= (y/z_\alpha)^{-1/\gamma}$. This yields that
\begin{align*}
\bar{c}^*_{\mathrm{ratio}}=\lim_{x\to\infty}\frac{c^*(x,h_1,h_2)}{x}=\lim_{y\to 0}\frac{c^\dag(y,h_1,h_2)}{x(y)}&=\frac{1}{ \frac{2(1-\gamma^*)}{\kappa^2(m_1-1)(m_1-\gamma^*)(m_2-\gamma^*)}z_\alpha^{m_1-1}+\frac{m_2-1}{r(m_2-\gamma^*)}}\\
%&=\frac{r\kappa^2(m_1-1)(m_1-\gamma^*)(m_2-\gamma^*)}{2r(1-\gamma^*)z_{\alpha}^{m_1-1}+\kappa^2(m_1-1)(m_2-1)(m_1-\gamma^*)}\\
&=\frac{\kappa^2(m_1-1)(m_1-\gamma^*)(m_2-\gamma^*)}{2((1-\gamma^*)z_{\alpha}^{m_1-1}-m_1+\gamma^*))}.
\end{align*}
Differentiating  with respect to $\alpha$ on both side of \eqref{eq: z_alpha_beta}, we have
\begin{align*}
\frac{d z_\alpha}{d\alpha}=\frac{2m_2(1-z_{\alpha}^{m_1-1})}{\kappa^2(m_1-1)}<0,
\end{align*}
which implies that $\alpha\to z_{\alpha}$ is decreasing. Since  $m_1 > 1 > \gamma^* > m_2$, the function $\alpha \to \bar{c}^*_{\mathrm{ratio}}$ is decreasing. Especially, when $\alpha=0$, $z_{\alpha}=1$, then it is not hard to verify that
\begin{align*}
\bar{c}^*_{\mathrm{ratio}}&=-\frac{\kappa^2}{2}(m_1-\gamma^*)(m_2-\gamma^*)=\frac{\delta}{\gamma}-(1-\gamma)\left(\frac{(\mu-r)^2}{2\sigma^2\gamma^2}+\frac{r}{\gamma}\right),
\end{align*}
which is exactly the optimal consumption strategy of the classical Merton’s problem in \cite{Mert1971JET}. Thus, we get the desired result of item (i).

\subsection{Proof of Proposition \ref{prop:portfolio}}\label{sec:proof_prop_portfolio}
 We  claim that $C_3(h_2)\leq 0,C_4(h_1)\geq 0$ for all $h_1\geq h_2\geq 0$. In view of $z_\alpha \in (0, 1-\alpha\delta]$, $z_\beta \in [1+\beta\delta, +\infty)$ and \eqref{eq: C_solu}, we have that
\begin{align*}
\frac{dC_3}{dz_{\beta}}&=-\frac{2(1-\gamma^*)h_2^{1+\gamma(m_1-1)}z_{\beta}^{-1-m_1}}{\kappa^2(m_1-m_2)(m_1-\gamma^*)}(z_{\beta}-(\beta\delta+1))<0,\\
\frac{dC_4}{dz_{\alpha}}&=\frac{2(1-\gamma^*)h_1^{1+\gamma(m_2-1)}z_{\alpha}^{-1-m_2}}{\kappa^2(m_1-m_2)(\gamma^*-m_2)}((1-\alpha\delta)-z_{\alpha})>0,
\end{align*}
which yields that $C_3(h_2)\leq C_3(h_2)|_{z_{\beta}=1}\leq 0$ and  $C_4(h_1)\leq C_4(h_1)|_{z_{\alpha}=1}\leq 0$ for all $h_1\geq h_2\geq 0$. Then we show $\pi^*_{\mathrm{ratio}}(x,h_1,h_2)\leq \frac{\mu-r}{\sigma^2\gamma}$ in different regions respectively.

 For the case with  $x> x_\mathrm{lavs}(h_1,h_2)$, using  \eqref{eq:asymptotic-x-infinity} and \eqref{eq:asymptotic-pi-infinity} again, we have that
\begin{align*}
\pi^*_{\mathrm{ratio}}(x,h_1,h_2)=\frac{\mu-r}{\sigma^2\gamma}+\frac{\mu-r}{\sigma^2}\frac{m_1(m_1-\gamma^*)C_3( h_2)y^{m_1-1}}{x}\leq \frac{\mu-r}{\sigma^2\gamma}.
\end{align*}
For the case with $x_\mathrm{peak}(h_1,h_2) < x \leq x_\mathrm{lavs}(h_1, h_2)$, i.e., $z_\alpha h_1^{-\gamma} \leq y < h_1^{-\gamma}$, we have that
\begin{align*}
&\pi^*_{\mathrm{ratio}}(x,h_1,h_2)\leq \frac{\mu-r}{\sigma^2\gamma}
\Leftrightarrow \pi^\dag(y, h_1, h_2)\leq -\frac{\mu-r}{\sigma^2\gamma} v_y(y,h_1,h_2)\\
&\Leftrightarrow F(y,h_1,h_2):=m_1(m_1-\gamma^*)C_1(h_1, h_2)y^{m_1-1} + m_2(m_2-\gamma^*)C_2(h_1)y^{m_2-1}-\frac{h_1}{r\gamma}\leq 0.
\end{align*}
Note that
\begin{align*}
\frac{\partial F(y,h_1,h_2)}{\partial y}=\left(m_1(m_1-1)(m_1-\gamma^*)C_1(h_1, h_2)y^{m_1-m_2} + m_2(m_2-1)(m_2-\gamma^*)C_2(h_1)\right)y^{m_2-2},
\end{align*}
thus we know that one of the following cases holds: (i) $F(y,h_1,h_2)\geq 0$ for all $z_\alpha h_1^{-\gamma} \leq y < h_1^{-\gamma}$; (ii) $F(y,h_1,h_2)\leq 0$ for all $z_\alpha h_1^{-\gamma} \leq y < h_1^{-\gamma}$; or (iii) there exists $y_0\in (z_\alpha h_1^{-\gamma}, h_1^{-\gamma})$ such that $F(y,h_1,h_2)\leq 0$ for $y\in(z_\alpha h_1^{-\gamma},y_0)$ and $F(y,h_1,h_2)\geq 0$ for $y\in(y_0, h_1^{-\gamma})$. This yields that
$F(y,h_1,h_2)\leq \max\{F(h_1^{-\gamma},h_1,h_2),F(z_\alpha h_1^{-\gamma},h_1,h_2)\}$. By direct calculation, it follows that
\begin{align*}
F(z_\alpha h_1^{-\gamma},h_1,h_2)\leq 0,\quad F(h_1^{-\gamma},h_1,h_2)\leq \frac{z_{\alpha}^{1-m_2}-1}{r\gamma}+\frac{2(1-\gamma^*)}{\kappa^2(m_1-m_2)(m_1-1)}\left(1-z_{\alpha}^{m_1-m_2}\right).
\end{align*}
It is not hard to verify that the function $\frac{z_{\alpha}^{1-m_2}-1}{r\gamma}+\frac{2(1-\gamma^*)}{\kappa^2(m_1-m_2)(m_1-1)}\left(1-z_{\alpha}^{m_1-m_2}\right)$ is increasing with respect to $z_{\alpha}$. Since $z_{\alpha}\leq 1$,  we have $F(h_1^{-\gamma},h_1,h_2)\leq 0$ which implies $\pi^*_{\mathrm{ratio}}(x,h_1,h_2)\leq \frac{\mu-r}{\sigma^2\gamma}$ for $z_\alpha h_1^{-\gamma} \leq y < h_1^{-\gamma}$.

For the case with $x_\mathrm{valy}(h_1,h_2) \leq x \leq x_\mathrm{peak}(h_1,h_2)$, i.e., $ h_1^{-\gamma} \leq y < h_2^{-\gamma}$, we have that
\begin{align*}
\pi^*_{\mathrm{ratio}}(x,h_1,h_2)\leq \frac{\mu-r}{\sigma^2\gamma}
&\Leftrightarrow \pi^\dag(y, h_1, h_2)\leq -\frac{\mu-r}{\sigma^2\gamma} v_y(y,h_1,h_2)\\
&\Leftrightarrow m_1(m_1-\gamma^*)C_3(h_2)y^{m_1-1} + m_2(m_2-\gamma^*)C_4(h_1)y^{m_2-1}\leq 0,
\end{align*}
which obviously holds as $C_3(h_2)\leq 0,C_4(h_1)\leq 0$. The proof of the case with $ x_\mathrm{gloom}(h_1,h_2) \leq x < x_\mathrm{valy}(h_1, h_2)$ and $x < x_\mathrm{gloom}(h_1, h_2)$ is similar, thus we omit it here. Putting all the pieces together, we get the desired result.

\section*{Acknowledgements}
Y. Huang, K. Yan and Q. Zhang are supported by the Hong Kong Polytechnic University research grant under no. P0045654.

\bibliographystyle{plain}
{\small
\bibliography{references}
}

\end{document}